\newtheorem{theorem}{Theorem}
\newtheorem{lemma}[theorem]{Lemma}
\newtheorem{proposition}{Proposition}
\newtheorem{remark}{Remark}
\theoremstyle{definition}
\newtheorem{definition}{Definition}
\newtheorem{assumption}[definition]{Assumption}
\newcommand{\R}{\mathbb R}
\newcommand{\bG}{\mathbf{G}}
\newcommand{\f}[1]{\mathbf{#1}}
\newcommand{\PiOmega}{\Pi_{\S_h}}
\newcommand{\PiSigma}{\Pi_{\V_h}}
\renewcommand{\S}{ \mathcal{S}}
\newcommand{\V}{ \mathcal{V}}
\newcommand{\refext}[1]{\widetilde{#1}}
\newcommand{\refbox}{R}
\newcommand{\spline}{\sigma}
\newcommand{\rst}[1]{\ensuremath{ \left.\rule{0ex}{1.5ex}\right|_{#1}}}
\newcommand{\bzeta}{{\boldsymbol{\zeta}}}
\newcommand{\T}{\mathcal{T}}
\newcommand{\id}{\ell}
\newcommand{\bxi}{\boldsymbol{\xi}}
\newcommand{\bA}{{\bf A}}
 \newcommand{\A}{\mathcal{A}}
 \newcommand{\B}{\mathcal{B}}
\newcommand\twoD[1]{\accentset{\diamond}{#1}}
\newcommand\oneD[1]{\bar{#1}}
\newcommand\inter[1]{\emph{interior}\left({#1}\right)}
\begin{document}

\begin{frontmatter}
  
\title{Unstructured spline spaces for isogeometric analysis based on spline manifolds}

\author[1,2]{Giancarlo Sangalli}
\author[1]{Thomas Takacs}
\author[2]{Rafael V\'azquez}

\address[1]{Dipartimento di Matematica ``F. Casorati'', Universit\`a degli Studi di Pavia, Italy}
\address[2]{Istituto di Matematica Applicata e Tecnologie Informatiche 
``E. Magenes'' (CNR), Italy}

\begin{abstract}
Based on \cite{grimm1995modeling} we introduce and study  a
mathematical framework for analysis-suitable unstructured  B-spline
spaces. In this setting the parameter domain has a manifold structure 
which allows for the definition of function spaces that have a 
tensor-product structure locally, but not globally. This includes 
configurations such as B-splines over multi-patch domains with
extraordinary points, analysis-suitable unstructured T-splines,
or more general constructions. Within this framework, we
generalize  the concept of dual-compatible
B-splines (developed for structured T-splines in
\cite{beirao2013analysis}). This allows us to prove the key properties
that are needed for isogeometric analysis, such as linear independence and
optimal approximation properties for $h$-refined meshes. 
\end{abstract}

\end{frontmatter}

\section{Introduction}

In isogeometric analysis, as it was introduced in \cite{Hughes2005}, the physical domain is usually parametrized smoothly by tensor-product B-splines or NURBS, therefore it has to be diffeomorphic to a rectangle. When dealing with geometrically complex domains, there is a need for a more flexible geometry representation. Several approaches have been proposed so far. 
We would like to point out several common strategies that can be studied within the framework we present in this paper. One is to use a multi-patch representation of the domain, i.e. to split the domain into rectangular or hexahedral boxes. Starting from the framework presented in \cite{igaBook}, there exist several contributions on multi-patch representations in isogeometric analysis \cite{kiendl2010,Beirao2011,Kleiss2012,Xu2013,Juettler2014,Buchegger2015}. In most constructions, the basis functions are only $C^0$ across patch interfaces. We want to point out the recent paper \cite{Buchegger2015}, in which the authors present a construction to enhance the smoothness across patch interfaces away from extraordinary features. 

Another way to handle complex geometries is to use a specific set of
non-tensor product function spaces, such as unstructured T-splines
(see \cite{wang2011converting,wang2012converting} as well as
\cite{Scott2013}). In both cases, the resulting function spaces are
very similar to the ones proposed in Sections
\ref{sec:dual-basis-construction} and \ref{sec:properties-dual-compatible}. We however follow a slightly different approach to represent the parameter domain and define the function spaces over it. 

Smooth constructions over general quad meshes can be introduced by means of subdivision surfaces. They have been studied in the context of isogeometric analysis in \cite{burkhart2010,barendrecht2013,juttler2015}. We do not go into the details of subdivision based constructions, but want to point out the relation to the manifold based framework. Function spaces based on subdivision schemes that generalize tensor-product B-splines, such as Doo-Sabin (see \cite{Doo1978}) for biquadratics or Catmull-Clark (see \cite{Catmull1978}) for bicubics, can be interpreted as spline manifold spaces. In both cases, the function space is spanned by standard tensor-product B-splines as well as certain special functions near extraordinary points. The special functions are piecewise polynomials over infinitely many rings of quadrilateral elements.

In this paper we consider spline manifold spaces as a theoretical tool
in isogeometric analysis. Note that it is not the purpose of this
study to introduce a new way to define or implement unstructured
spline spaces. The aim is rather to develop a framework to study the
theoretical properties of a wide range of spline representations over
non-rectangular parameter domains based on manifolds. Concerning
spline manifolds we refer to \cite{grimm1995modeling}, where the
authors propose  an abstraction of the concept of geometry
parametrizations that is based on a manifold structure. In the proposed framework, 
the underlying parameter domain forms a manifold, thus allowing to 
represent more general geometric domains.

In the following we briefly review the concept of a geometric domain which is parametrized by spline manifolds, and discuss its main
features. We consider a domain $\Sigma \subset \R^n$ which can be 
interpreted as a $d$-dimensional manifold with $d\leq n$. We mostly focus on 
a two-dimensional planar domain $\Sigma \subset
\R^2$, a surface $\Sigma \subset \R^3$ or a three-dimensional volumetric domain $\Sigma \subset
\R^3$. In an abstract setting, a manifold $\Sigma$ is defined by an \emph{atlas}, i.e., a family of charts $\Sigma_i$ such that
\begin{displaymath}
  \Sigma = \bigcup_{i=1}^{N} \Sigma_{i},
\end{displaymath}
together with suitable transition maps between intersecting charts $\Sigma_i$ and $\Sigma_j$.
To define $\Sigma$ as a spline manifold, we must define a family of \emph{open} parameter subdomains $\Omega_i$, together with a spline space on each subdomain, and each chart is the image of a \emph{spline parametrization} $\bG_i: \Omega_i \rightarrow \Sigma_i$. The union of the subdomains $\Omega_i$ forms an unstructured parameter domain $\Omega$, and suitable transition functions between the subdomains, together with a relation between the corresponding spline spaces, endow $\Omega$ with a manifold structure, which is inherited by $\Sigma$. Notice that to cover completely the domain $\Omega$, the open parameter subdomains $\Omega_i$ overlap. Hence, in order to cover the extraordinary points (or edges) it is necessary that some of the subdomains $\Omega_i$ also have an unstructured configuration, and non-tensor product spline functions need to be defined. This is in contrast to traditional multi-patch representations, where only the patch boundaries intersect, and the (closed) domain is defined as the union of the closure of the patches.

In Section \ref{sec:parameter-manifold} we recall and study the notion of a parameter manifold as presented in \cite{grimm1995modeling}. We introduce the mesh on the parameter manifold and corresponding mesh constraints in Section \ref{sec:mesh}. In Section \ref{sec:splinemanifoldspaces} we introduce spline manifold spaces based on tensor product B-splines and relate them to existing constructions \cite{Buchegger2015,wang2011converting,wang2012converting,Scott2013}. Finally, we develop the framework of analysis suitable spline manifold spaces in Section \ref{sec:analysis-suitable}, where we extend the notion of dual-compatibility to B-spline manifolds. In the main part of the paper we assume that the manifold has no boundary. For a more detailed study of spline manifolds with boundary see Appendix \ref{appendix:boundary}. We conclude the paper and present possible extensions in Section \ref{sec:conclusion}.

\section{Parameter manifold}
\label{sec:parameter-manifold}

Before we can define unstructured spline spaces on manifolds we need to introduce the 
abstract representation of the parameter domain, the so-called \emph{parameter manifold}. 
The following definitions are taken from \cite{grimm1995modeling}. The definitions are 
valid for arbitrary dimension $d$, but we will mostly consider $d=1,2,3$.

\begin{definition}[Proto-manifold]\label{defi:proto-manifold} A proto-manifold of dimension $d$
consists of
  \begin{itemize}
  \item a finite set $\{  \omega_i\}_{i=1,\ldots,N}$  (named
    \emph{proto-atlas}) of \emph{charts} $\omega_i$, that are open polytopes $ \omega_i \subset \R^d$, 
    line segments for $d=1$, polygons for $d=2$ or polyhedra for $d=3$;
  \item a set of open \emph{transition domains}  $\{  \omega_{i,j}\}_{i,j=1,\ldots,N}$
    such that $  \omega_{i,j} \subset   \omega_{i} $ and $
    \omega_{i,i} =  \omega_{i} $; 
  \item a set of \emph{transition functions} $\{  \psi_{i,j}\}_{i,j=1,\ldots,N}$, that are
    homeomorphisms  $\psi_{i,j} : \omega_{i,j}  \rightarrow  \omega_{j,i} $ fulfilling 
     the \emph{cocycle condition} $\psi_{j,k}\circ \psi_{i,j} =\psi_{i,k} $ in $ \omega_{i,j}  \cap  \omega_{i,k}  $ for all $i,j,k=1,\ldots,N$; 
  \item For every $i,j$, with $i\neq j$, for every $\boldsymbol{\zeta}_i \in \partial \omega_{i,j} \cap \omega_{i}$ and $\boldsymbol{\zeta}_j \in \partial \omega_{j,i} \cap \omega_{j}$, there are open balls, $V_{\boldsymbol{\zeta}_i}$ and $V_{\boldsymbol{\zeta}_j}$, centered at $\boldsymbol{\zeta}_i$ and $\boldsymbol{\zeta}_j$, such that no point of $V_{\boldsymbol{\zeta}_j} \cap \omega_{j,i}$ is the image of any point of $V_{\boldsymbol{\zeta}_i}\cap\omega_{i,j}$ by $\psi_{j,i}$.

  \end{itemize}
  
 \end{definition}
\begin{figure}[!ht]
    \centering
    \includegraphics[trim= 20pt 70pt 20pt 50pt,clip=true,width=.8\textwidth]{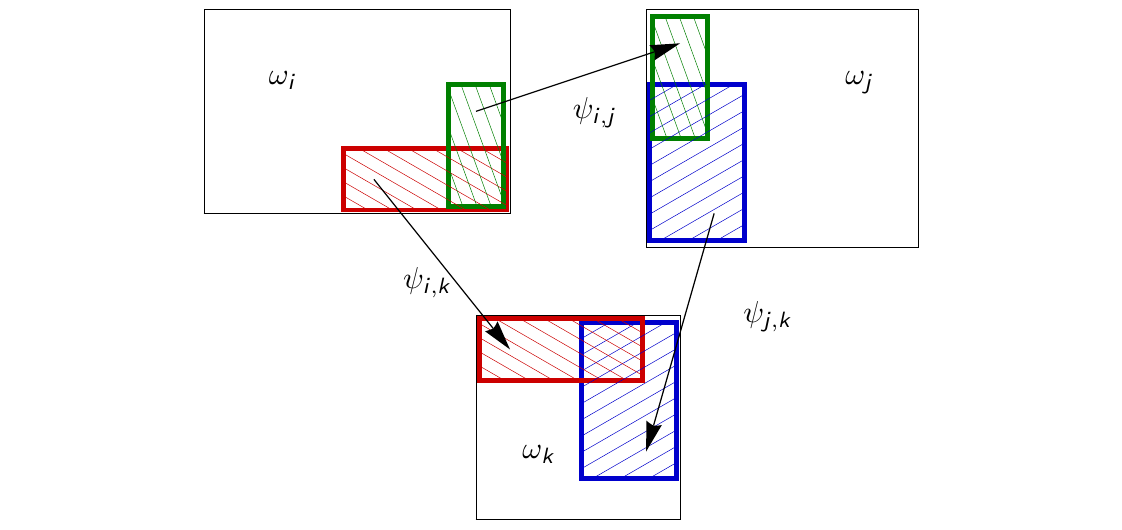}
    \caption{Visualization of the cocycle condition}\label{fig:cocyclecond}
\end{figure}
In Figure \ref{fig:cocyclecond} we visualize the cocycle condition, depicting three domains $\omega_i$, $\omega_j$ and $\omega_k$ and 
respective transition functions $\psi_{i,j}$, $\psi_{j,k}$ and $\psi_{i,k}$. The hatched regions represent corresponding transition domains. 

In general, the transition domains $\omega_{i,j}$ may be empty. It follows  directly from the
cocycle condition  that the transition function $\psi_{i,i} $ is the identity function on $\omega_{i,i} = \omega_{i}$ and that 
$\psi_{i,j}^{-1}  = \psi_{j,i} $ for all $i,j$. 

\begin{remark}
The last condition in Definition \ref{defi:proto-manifold} is taken from \cite{Siqueira2009}. It guarantees that the 
proto-manifold actually represents a manifold, i.e. that there are no bifurcations of the domain. Note that 
this condition is not necessary for the following definitions. However, if the condition is omitted, the resulting object is not a 
manifold anymore.
\end{remark}

In the next section the charts $ \omega_i$ will be used as (local) parameter domains to define
splines. By merging and identifying the charts of the  proto-manifold we obtain
a manifold, which will serve as the parameter domain in our setting, thus the name \emph{parameter manifold}.
  \begin{definition}[Parameter manifold] \label{def:parameter-manifold}
Given a proto-manifold, the set 
    \begin{equation}\label{eq:disj-union-of-Omega-i}
       \Omega = \left ( \bigsqcup_{i=1,\ldots,N}  \omega_i\right )
      \bigg/\sim
    \end{equation}
is called a \emph{parameter manifold}. 
    Here $\bigsqcup$ denotes the disjoint union, i.e., 
$$\bigsqcup_{i=1,\ldots,N}  \omega_i = \left \{[\bzeta_i,i] , \,
  \bzeta_i\in \omega_i,i=1,\ldots,N \right \}
$$
 and the equivalence relation $\sim$ is defined for all $\bzeta_i\in
     \omega_i$ and $\bzeta_j\in  \omega_j$, as
$$
[\bzeta_i,i] \sim
     [\bzeta_j,j] \Leftrightarrow \psi_{i,j}(\bzeta_i)=\bzeta_j.
$$
We denote by $\pi_{i}(\bzeta_i) \in \Omega$ the equivalence class corresponding to $\bzeta_i \in  \omega_i$. 
  \end{definition}

To motivate the definition (and wording) above, we recall that $
\Omega $ is indeed a topological manifold, endowed with the
natural  structure (see \cite{grimm1995modeling} and \cite{Siqueira2009}): $\pi_{i}$ is a one-to-one correspondence
between each $ \omega_i$ and  $\pi_{i}( \omega_i)$ and plays the
role of a local representation, its inverse $\pi_i^{-1}$ is the coordinate chart
in the classical language of manifolds. 
In the following we use the notation  $ \Omega_i = \pi_{i}( \omega_i)$ and  $ \Omega_{i,j} = \pi_{i}( \omega_{i,j})$, where  $  \Omega_i,\Omega_{i,j} \subset \Omega$. Hence we have $ \Omega_{i,j} = \Omega_{j,i} =  \Omega_{i} \cap \Omega_{j}$. The parameter manifold $\Omega$ can be interpreted as a topological manifold. 

\begin{remark}
To be precise, a parameter manifold is a class of piecewise
smooth manifolds, which is a sub-class of topological
manifolds and a super-class of smooth manifolds. It is similar
to the concept of piecewise linear manifolds (see \cite{Rourke1972}). Depending on its 
local structure, the parameter manifold is either $C^0$ or $C^\infty$ locally. In general the parameter manifold is globally  $C^0$ and
piecewise $C^\infty$.
\end{remark}

We assume that there exists a metric on $\Omega$, that allows us to
introduce the usual Lebesgue space  $L^2(\Omega)$. 
In Section \ref{sec:analysis-suitable} we present in more detail the function spaces that are necessary for the analysis.

Having defined a parameter manifold $\Omega$, we can define a mesh $\T$ on $\Omega$ in Section~\ref{sec:mesh} and a function space with a piecewise construction over the mesh in Section~\ref{sec:splinemanifoldspaces}.

\section{Mesh and mesh constraints on the parameter manifold}\label{sec:mesh}

We introduce the general concept of a mesh $\T$ on a parameter manifold $\Omega$ in Section \ref{subsec:mesh-para-manif} and then develop the specific configurations we consider in Section \ref{sec:charts}. Following that, we present some example configurations and study the meshing of complex geometries in Section \ref{sec:meshing}.

\subsection{Mesh on a parameter manifold}
\label{subsec:mesh-para-manif}

First we define a proto-mesh on the charts. 
\begin{definition} \label{def:proto-mesh}
A \emph{proto-mesh} 
 \begin{equation}
   \label{eq:mesh-i}
   \{\tau_i \}_{i=1,\ldots,N} \hspace{10pt} \mbox{ with } \tau_i= \{ q \subset  \omega_i \}
 \end{equation}
is a collection of sets, where 
\begin{itemize}
\item each set $\tau_i$ is composed of open
polytopes $q$, called \emph{elements}; these are intervals,
quadrilaterals, hexahedra, etc., depending on the dimension $d=1,2,3,\ldots$, respectively, and each set $\tau_i$ 
is a mesh on $\omega_i$, i.e. the elements are disjoint 
and the union of the closures of the elements is the closure of $
\omega_i $;
\item for every $i,j$, $\omega_{i,j}$ is the interior of the 
  union of the closure of elements of $\tau_i$; and 
\item the transition functions $\psi_{i,j}$ map elements onto elements, i.e. 
\begin{equation}
  \label{eq:mesh-compatile-with-transitions}
  \forall q \in \tau_i, \,  \psi_{i,j} (q) \in \tau_j.
\end{equation}
\end{itemize}
\end{definition}

Furthermore, we have the following.
\begin{assumption}\label{assumption:piecewise-multilinear-manifold}
The transition functions are piecewise
$d$-linear mappings with respect to the mesh.
\end{assumption}
The proto-mesh naturally defines a mesh on $\Omega$.
\begin{definition}[Mesh on $\Omega$] We define the mesh on the
  parameter manifold  $\Omega$ as
\begin{equation}
  \label{eq:mesh}
  \T= \{ Q \subset \Omega : \; Q = \pi_{i}(q), q \in    \tau_i, i=1,\ldots,N \}.
\end{equation}
\end{definition}
\begin{remark}\label{rem:mesh-on-Omega}
  Thanks to \eqref{eq:mesh-compatile-with-transitions}, the set $\T$ in 
  \eqref{eq:mesh} is indeed a well defined mesh on $ \Omega $. 
The elements of $\T$ are subsets of $\Omega$ that fulfill the 
standard properties of a mesh, i.e. the elements are disjoint, the union of the closures of the 
elements is the closure of $\Omega$. 
Since $ \Omega $
is a topological manifold the notion of the closure of elements, boundary edges, faces, etc. is 
well defined and derives from the local definition on each chart and the equivalence relation given 
by the transition functions.
\end{remark}

\subsection{Structured and unstructured meshes on the parameter manifold}\label{sec:charts}

In the following we  classify and restrict
to specific but relevant charts, depending on their local  mesh topology. 

We will say that several objects \emph{share} a common object A, if A is contained in the closure of each of
the objects.  Mesh objects and properties  defined on the charts can be carried over to the 
mesh $\T$ on $\Omega$. We will not distinguish between the mesh $\T$ on $\Omega$ and the proto-mesh defined on the charts, unless it is necessary. The relations between vertices, edges, faces and elements are stated within the global mesh. Moreover, these geometric objects are always assumed to be open.

\begin{definition}[Structured chart]\label{def:structured-chart}
A $d$-dimensional chart $\omega_i$ is called a \emph{structured chart} if
\begin{itemize}
\item[(A)] 	 $\omega_i$ is a $d$-box, that is 
\begin{equation*}
	\omega_i=  \prod_{\id=1}^d \left] a_{i,\id},b_{i,\id} \right[ ,
\end{equation*}
and $\tau_i$ is a box mesh (see, e.g., \cite{Dokken2013});
\item[(B)] for every $j$,  $\omega_{i,j}$ is a $d$-box or the union of $d$-boxes with disjoint closures; and 
\item[(C)] for every $j$,  if $\omega_j$ is a  structured chart then the 
transition function $\psi_{i,j} : \omega_{i,j}  \rightarrow
  \omega_{j,i} $ is an affine mapping (a linear polynomial) in each connected component of $\omega_{i,j}$.
\end{itemize}
\end{definition}
Considering unstructured charts we need to distinguish three different types, depending on the dimension and on the topological structure.

\begin{definition}[Unstructured vertex chart, two-dimensional]\label{def:unstructured-chart-2d}
A two-dimensional chart $\omega_i$ is called an \emph{unstructured vertex
  chart} corresponding to an extraordinary vertex
$\bxi_i$ of \emph{valence} $k_i\neq 4$
 if 
\begin{itemize}
\item[(A)] 	 $ \omega_i $ is formed by a ring of $k_i$ conforming quadrangular segments $s_{i,\ell}$, with $\ell=1,\ldots,k_i$, around $\bxi_i$, where each segment  
  has a mesh $\sigma_{i,\ell}$ which is topologically equivalent to a box mesh, and the mesh $\tau_i$ is given as the union of the meshes $\sigma_{i,\ell}$ on the segments $s_{i,\ell}$; 
\item[(B)] $\omega_i$, except for the extraordinary vertex $\bxi_i$, is 
covered by the transition domains $\omega_{i,j}$ with structured charts, i.e. 
  \begin{displaymath}
    \bigcup _{\stackrel{j=1,\ldots,N}{\omega_j\textrm{ structured}}} \omega_{i,j} = \omega_i \setminus \bxi_i,
  \end{displaymath}
and the transition domains $\omega_{i,j}$ are given by the union of segments; and 
\item[(C)] if $ \omega_j$ is an unstructured chart with
  $i\neq j$, then $\omega_{i,j}= \emptyset$.
\end{itemize}
\end{definition}

\begin{figure}[!ht]
\centering
\subfigure[]{\includegraphics[width=0.25\textwidth]{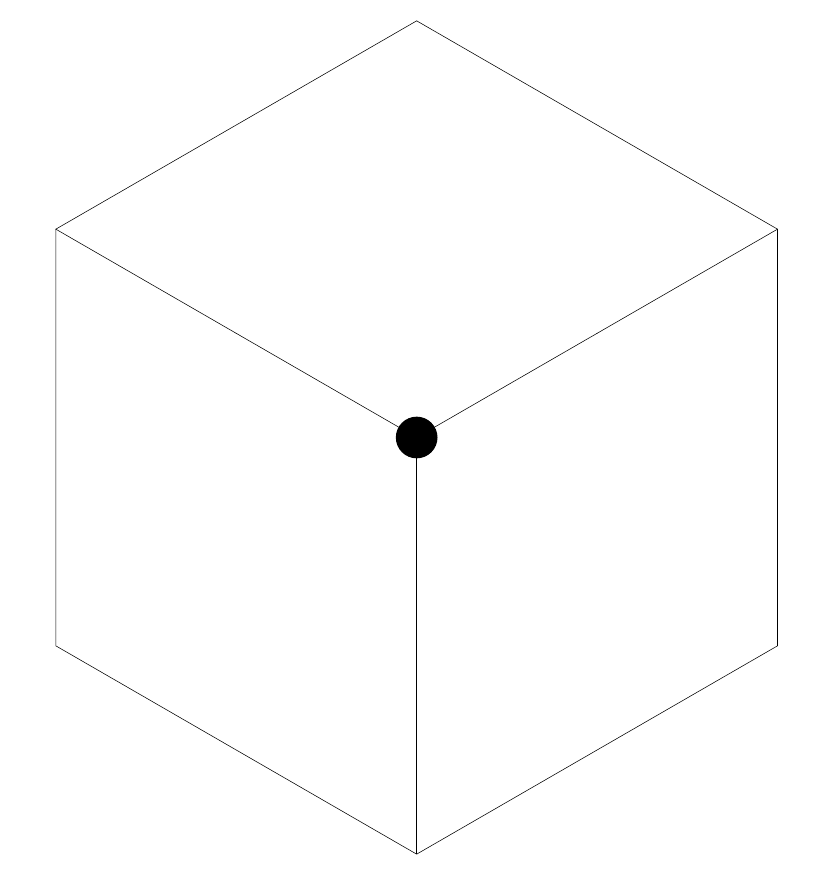}\label{fig:uv-charts-a}}
\hspace{0.05\textwidth}
\subfigure[]{\includegraphics[width=0.25\textwidth]{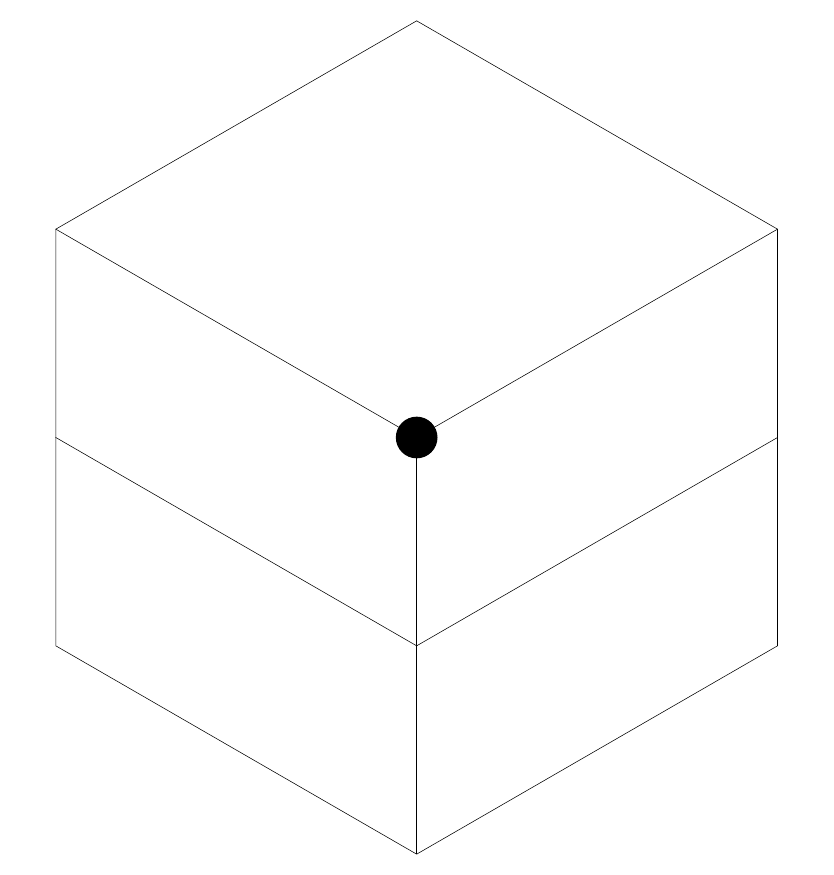}\label{fig:uv-charts-b}}
\hspace{0.05\textwidth}
\subfigure[]{\includegraphics[width=0.25\textwidth]{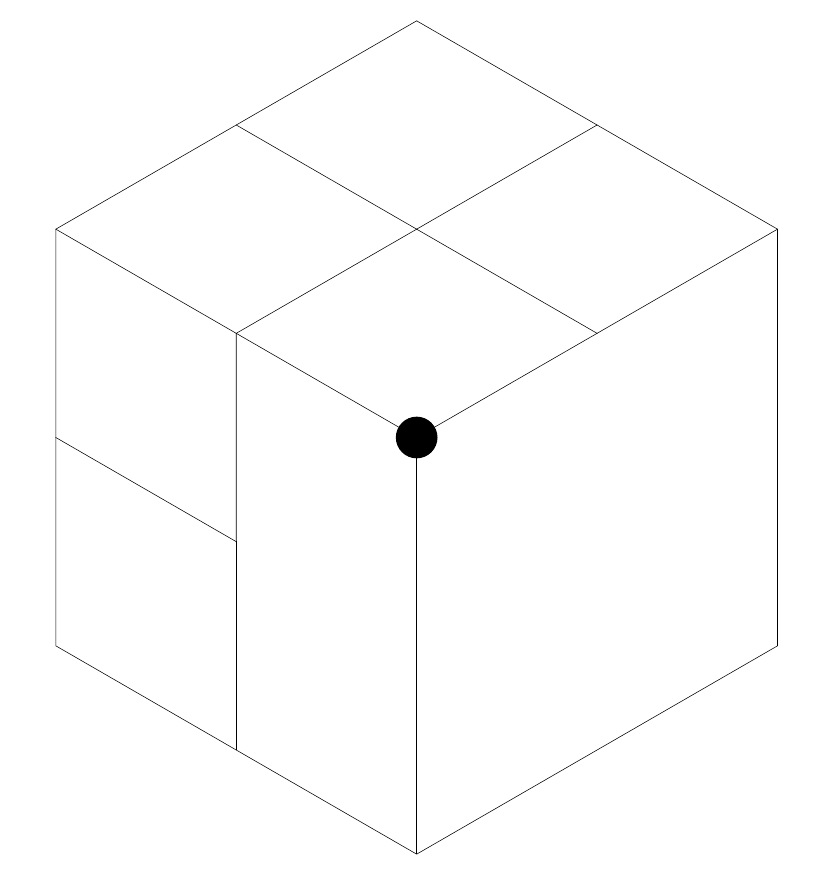}\label{fig:uv-charts-c}}
    \caption{Examples of meshes on a ring of $3$ conforming segments around a two-dimensional
      extraordinary vertex of valence $3$}\label{fig:uv-charts}
\end{figure}
To give more insight into the definition of an unstructured vertex
chart via its segments, we present some example configurations, which
are all constructed from the same three conforming segments. 
Figure~\ref{fig:uv-charts-a} depicts a mesh where each segment is covered by a single 
element. Figure~\ref{fig:uv-charts-b} depicts a conforming mesh and
Figure~\ref{fig:uv-charts-c} contains two hanging vertices, hence it
is a non-conforming mesh. This figure also tells that the
non-conformity of the mesh over the unstructured vertex chart may have
two reasons, either the meshes on two neighbouring segments do not
match (upper right hanging vertex) or the T-node is already present
within the mesh $\sigma_{i,\ell}$ on the segment (lower left hanging
vertex). 

\begin{definition}[Unstructured edge chart]\label{def:unstructured-edge-chart-3d}
A three-dimensional chart $\omega_i$ is called an \emph{unstructured
  edge chart} of valence $k_i \neq 4$ if 
\begin{itemize}
\item[(A)] 	 $\omega_i = \twoD{\omega}_i \times \oneD{\omega}_i $, where 
the two-dimensional chart $\twoD{\omega}_i$ is a two-dimensional unstructured vertex chart, 
as in Definition \ref{def:unstructured-chart-2d}, with an 
 extraordinary point $\twoD\bxi_i$ of valence $k_i$, partitioned into two-dimensional segments $\twoD s_{i,\ell}$ for $\ell = 1,\ldots,k_i$; 
$\oneD{\omega}_i$ is an interval $\oneD{\omega}_i  = \left]a_{i,3},b_{i,3}\right[$; each three-dimensional segment 
$s_{i,\ell} = \twoD s_{i,\ell} \times \oneD{\omega}_i$ has a mesh $\sigma_{i,\ell}$ which is equivalent to a three-dimensional box mesh, and the mesh $\tau_i$ is again given as the union of all meshes $\sigma_{i,\ell}$ on the segments;
\item[(B)] $\omega_i $, except for the \emph{extraordinary line}
  $\twoD\bxi_i \times\oneD{\omega}_i$, is 
covered by transition domains $\omega_{i,j}$ with structured charts, i.e. 
  \begin{displaymath}
    \bigcup _{\stackrel{j=1,\ldots,N}{\omega_j\textrm{ structured}}} \omega_{i,j} = \omega_i \setminus \twoD\bxi_i \times\oneD{\omega}_i,
  \end{displaymath}
and the transition domains $\omega_{i,j}$ are given as the tensor-product of the union of two-dimensional segments with an interval in the third direction; 
\item[(C)]  for structured charts $\omega_j$ the transition function $\psi_{i,j}$ is linear in the third coordinate $\zeta_3$; and 
\item[(D)] if $ \omega_j$ is an unstructured edge chart with
  $i\neq j$ and $\omega_{i,j} \neq \emptyset$, then there exists a structured chart $\omega_k$ such that $\omega_{i,j} \subseteq \omega_{i,k}$.
\end{itemize}
\end{definition}

\begin{definition}[Unstructured vertex chart, three-dimensional]\label{def:unstructured-vertex-chart-3d}
A three-dim\-ensional chart $\omega_i$ is called an \emph{unstructured vertex
  chart} corresponding to an extraordinary vertex
$\bxi_i$ of \emph{valence} $k_i$ 
 if 
\begin{itemize}
\item[(A)] $ \omega_i $ is formed by a ring of $k_i$ conforming hexahedral segments $s_{i,\ell}$, with $\ell=1,\ldots,k_i$, around $\bxi_i$, where each segment has a mesh $\sigma_{i,\ell}$ which is topologically equivalent to a box mesh, and the mesh $\tau_i$ is given as the union of the meshes $\sigma_{i,\ell}$ on the segments $s_{i,\ell}$;
\item[(B)] $\omega_i$, except for the
  extraordinary vertex $\bxi_i$, is 
covered by the transition domains $\omega_{i,j}$ with unstructured edge charts, i.e.,
  \begin{displaymath}
    \bigcup _{\stackrel{j=1,\ldots,N}{\omega_j\textrm{unstructured
          edge chart}}} \omega_{i,j} = \omega_i \setminus \bxi_i,
  \end{displaymath}
and the transition domains $\omega_{i,j}$ are given by the union of segments; and 
\item[(C)] if $ \omega_j$ is an unstructured vertex chart with
  $i\neq j$, then $\omega_{i,j}= \emptyset$.
\end{itemize}
\end{definition}

Note that from Definition \ref{def:unstructured-edge-chart-3d} (B) and Definition \ref{def:unstructured-vertex-chart-3d} (B) it follows that for every unstructured vertex chart in 3D the transition domains with structured charts cover everything, except for the extraordinary features (union of extraordinary vertex and extraordinary edges). In particular, the interior ef every element is covered.

We would like to point out that Definitions \ref{def:unstructured-chart-2d} (C), \ref{def:unstructured-edge-chart-3d} (D) and \ref{def:unstructured-vertex-chart-3d} (C) are not strictly necessary, but technicalities to simplify the definition of spline manifold spaces in Section \ref{sec:splinemanifoldspaces}. They guarantee that unstructured vertex charts do not overlap and that unstructured edge charts only overlap in the vicinity of an unstructured vertex. Notice that it is not possible that one element of the mesh $\T$ is adjacent to two unstructured vertices. In the context of meshing, this is not a severe restriction as we point out in the following section. Moreover, each unstructured vertex and each unstructured edge corresponds to exactly one unstructured vertex chart or unstructured edge chart, respectively.

\begin{assumption}
We assume that each chart  $\omega_i$   can be either 
structured as in Definition \ref{def:structured-chart} or unstructured as in 
Definitions \ref{def:unstructured-chart-2d}, \ref{def:unstructured-edge-chart-3d} and \ref{def:unstructured-vertex-chart-3d}.
\end{assumption}

In the following section we give some example configurations and study in more detail the meshing of complex geometries.

\subsection{Example configurations and meshing of complex geometries} \label{sec:meshing}

For simplicity, in the classification above we have restricted ourselves to a limited number of types of charts, which nevertheless contain most mesh configurations of practical interest. 
We first give a summary of the different types of
vertices that can occur (depending on the dimension), which motivates
our chart classification. Each unstructured chart is formally defined in such a way, that it 
can be used to cover a certain type of unstructured vertex. 
Following the discussion of the types of vertices, we present some simple example configurations and discuss the issues of meshing with manifolds. Without beeing thorough, we present configurations that can be covered and such that cannot. 
Since we do not want to go into the details of meshing we refer to the following literature
for quad-meshing \cite{Owen1998} and hex-meshing \cite{Owen1998,Juettler2014,Nguyen2014}.

For 1D domains (intervals, planar or spatial curves) there are only regular vertices, which are equivalent to the knots in the classical B-spline language. Hence no unstructured 
charts are needed. 

For 2D domains we consider three types of vertices. 
A 2D vertex can be a \emph{regular vertex},
a \emph{hanging vertex} (a T-node), or an \emph{extraordinary vertex} (see Table \ref{tab:classification-2d-vertices} and Figure \ref{fig:vert2D}). 
Both regular and hanging vertices are covered by structured charts, although they may also belong to an unstructured chart. 
Unstructured vertex charts cover a neighborhood of an extraordinary vertex. 

\begin{table}[ht]
\centering
\begin{tabular}{llr}
structured vertex & regular vertex & (Figure \ref{fig:vert2D-a}) \\
 & hanging vertex & (Figure \ref{fig:vert2D-b}) \\
unstructured vertex & extraordinary vertex &(Figures~\ref{fig:vert2D-c} and~\ref{fig:vert2D-d})
\end{tabular}
    \caption{Classification of vertices in 2D}\label{tab:classification-2d-vertices}
\end{table}

\begin{figure}[!ht]
\centering
\subfigure[]{\includegraphics[width=0.15\textwidth]{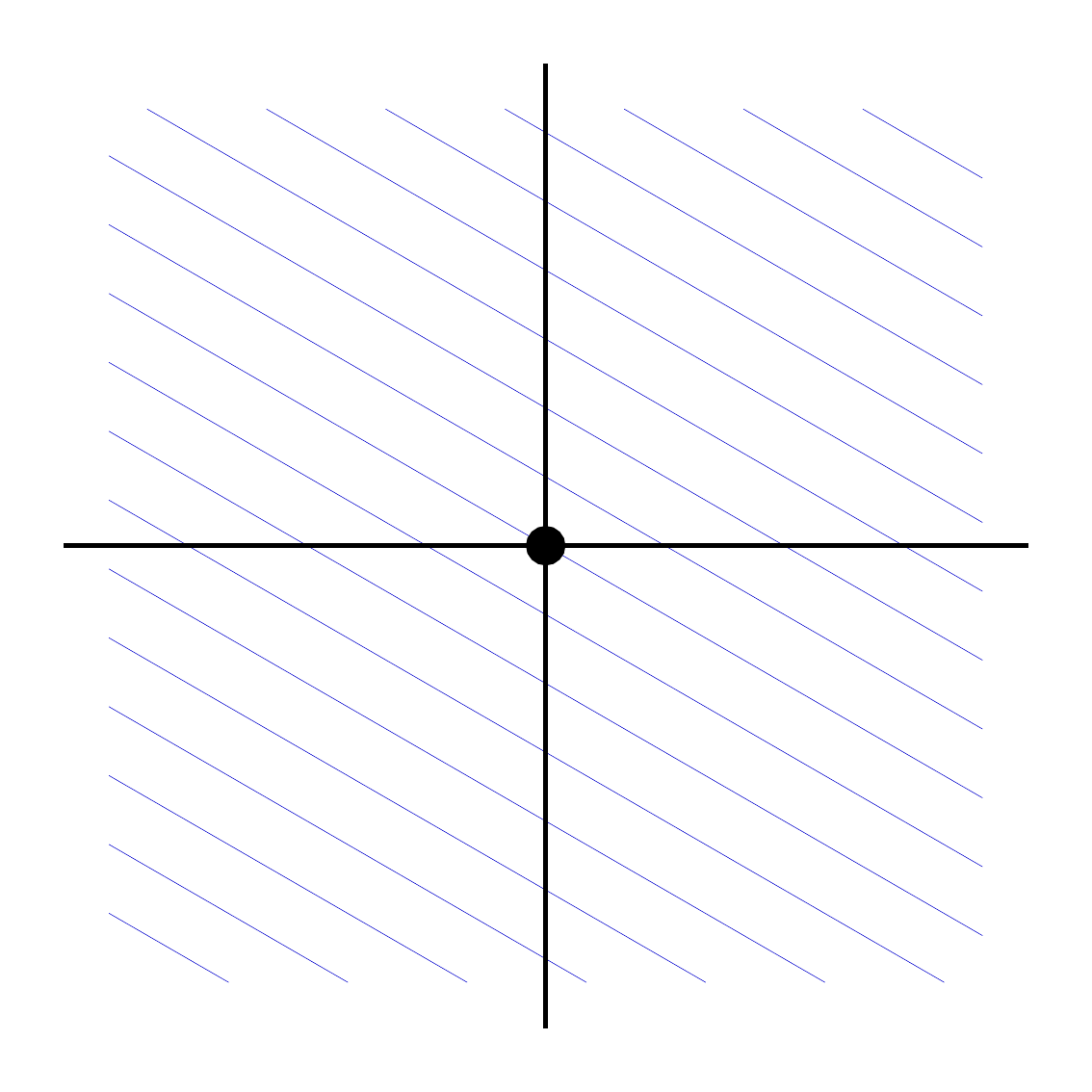}\label{fig:vert2D-a}}
\subfigure[]{\includegraphics[width=0.15\textwidth]{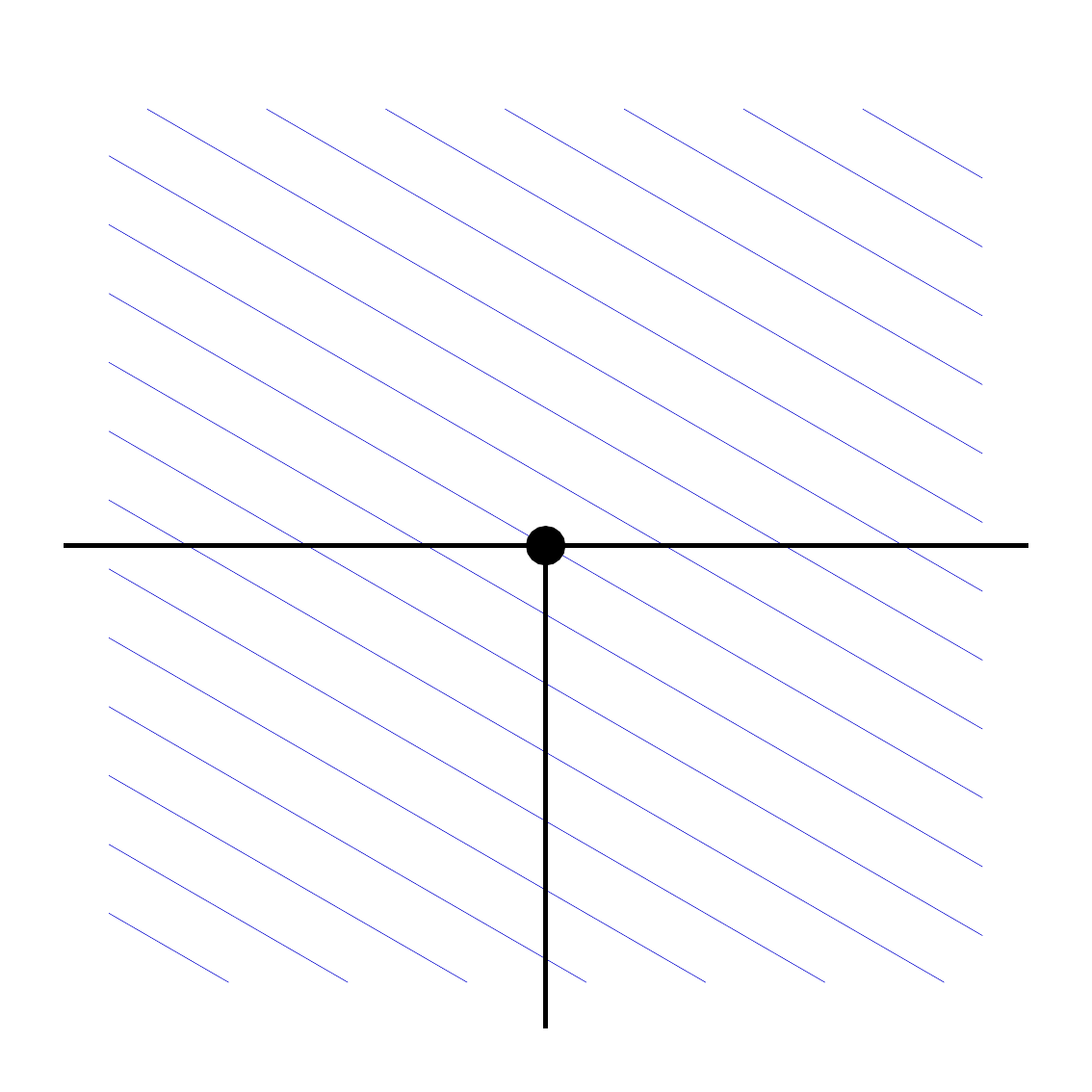}\label{fig:vert2D-b}}
\subfigure[]{\includegraphics[width=0.15\textwidth]{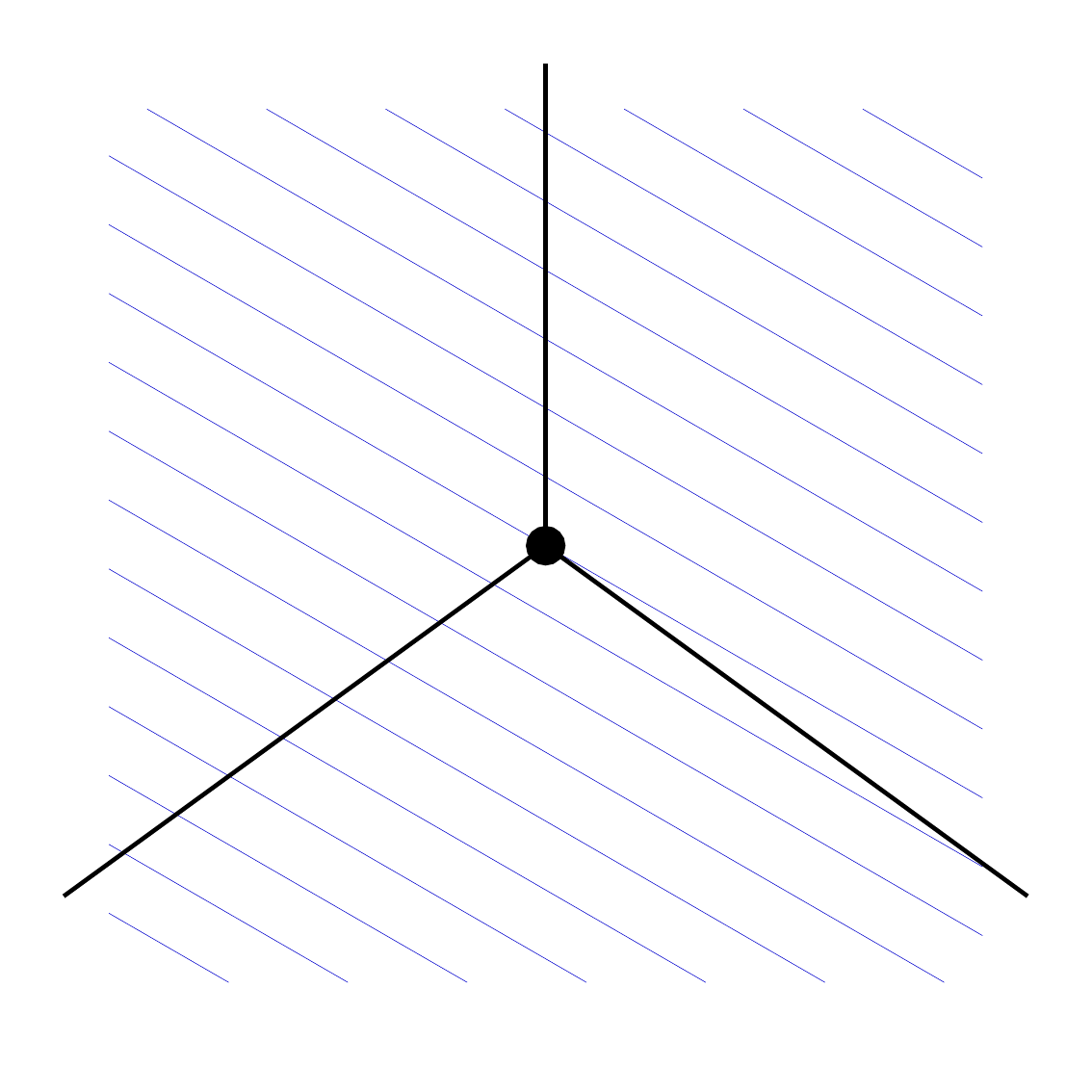}\label{fig:vert2D-c}}
\subfigure[]{\includegraphics[width=0.15\textwidth]{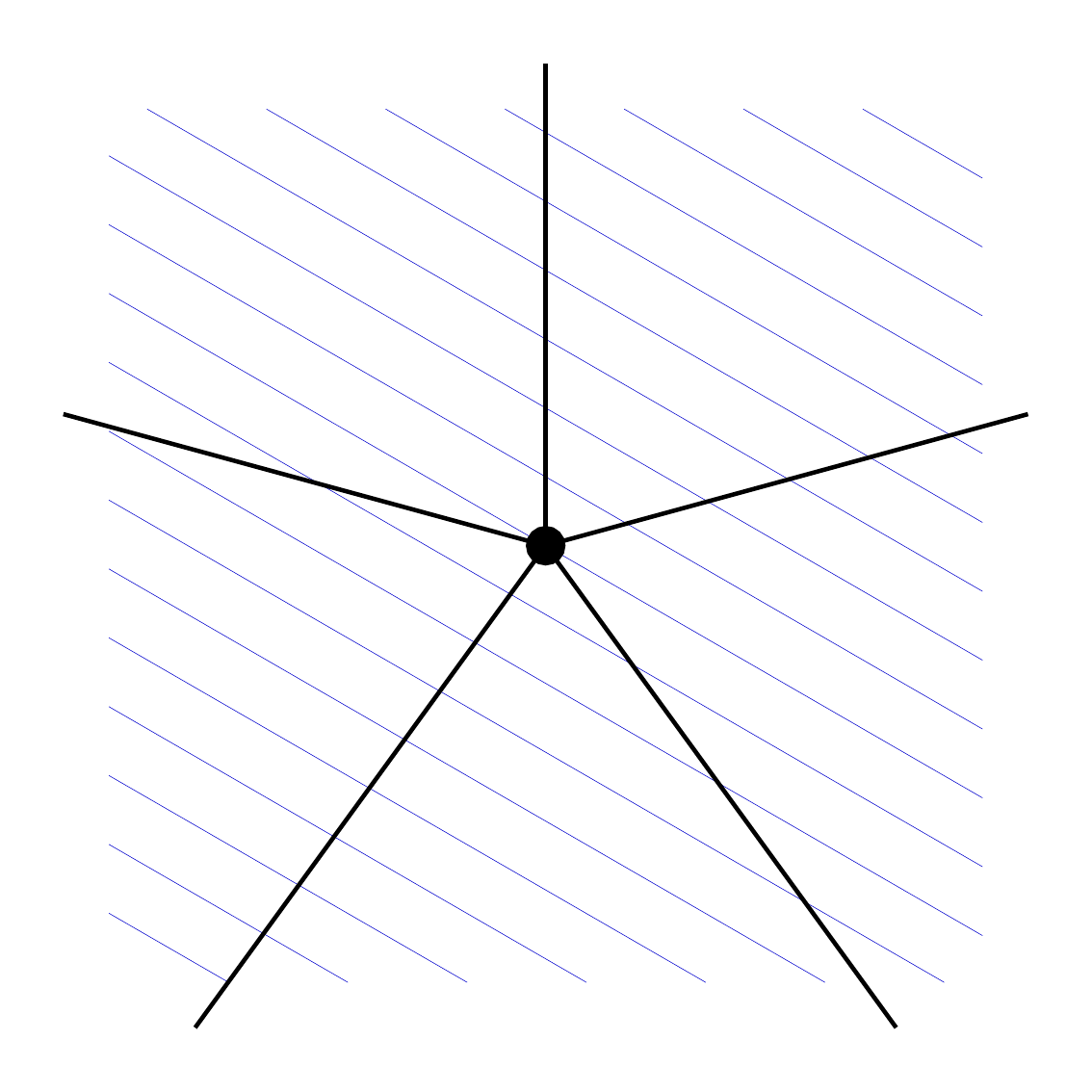}\label{fig:vert2D-d}}
    \caption{Different types of vertices in 2D ((a) regular, (b) hanging, (c) extraordinary valence 3, (d) extraordinary valence 5)}\label{fig:vert2D}
\end{figure}

For 3D domains we have three types of edges and four types of vertices. 
A 3D edge can be a \emph{regular edge}, 
a \emph{hanging edge}, or an \emph{extraordinary edge} of valence $k\neq
4$. A 3D vertex is called a \emph{regular vertex}, 
if it is shared by regular edges only; or a \emph{hanging vertex}, if it is shared by 
regular and hanging edges. 
Otherwise, the vertex is called an unstructured vertex.
The notion of unstructured vertices contains both vertices of extraordinary edges, 
so called \emph{partially unstructured vertices}, as well as 
\emph{fully unstructured vertices}. To be precise, a partially unstructured vertex is 
a vertex that is shared by exactly two extraordinary edges that can be 
covered by a single unstructured edge chart. If this is not possible, the vertex is a fully unstructured vertex. For a complete list of types of vertices and edges in 3D see Table \ref{tab:classification-3d-vertices}. 
Regular and hanging edges as well as regular and hanging vertices are covered by structured charts. 
Hence, we have considered two classes of three-dimensional
unstructured charts:  {unstructured vertex charts} (covering fully unstructured vertices) 
and {unstructured edge charts} (covering partially unstructured vertices 
and extraordinary edges). 
Unstructured edge charts are  tensor products of an unstructured vertex chart in two dimensions and an interval 
in the third dimension: there is an inner sequence of extraordinary edges 
and all other interior edges are regular or hanging. 
Similar to the two-dimensional case, an unstructured vertex chart in 3D covers 
a neighborhood of the fully unstructured vertex. 

\begin{table}[ht]
\centering
\begin{tabular}{ll}
structured edge & regular edge \\
 & hanging edge \\
unstructured edge & extraordinary edge \\
structured vertex & regular vertex \\
 & hanging vertex \\
unstructured vertex & partially unstructured vertex\\
 & fully unstructured vertex
\end{tabular}
    \caption{Classification of edges and vertices in 3D}\label{tab:classification-3d-vertices}
\end{table}

Let us consider an unstructured vertex chart $\omega_i$ composed of $k_i$ segments, 
where each segment is meshed with only one element. 
From Definition~\ref{def:structured-chart} it
follows that for any structured chart $\omega_j$ with $\omega_{i,j}\neq \emptyset$ the transition domain 
$\psi_{i,j} (\omega_{i,j}) = \omega_{j,i} $ is a box-mesh.  The same holds for
$\omega_{i,j}$, which is then  formed by one or at most  two
quadrilateral elements. There must be  (at least) $k_i$ subsets $\omega_{i,j}$, each one formed by
two adjacent elements, in order to cover $\omega_i \setminus \bxi_i$. 

\begin{figure}[!ht]
\centering
\subfigure[]{\includegraphics[height=0.18\textheight]{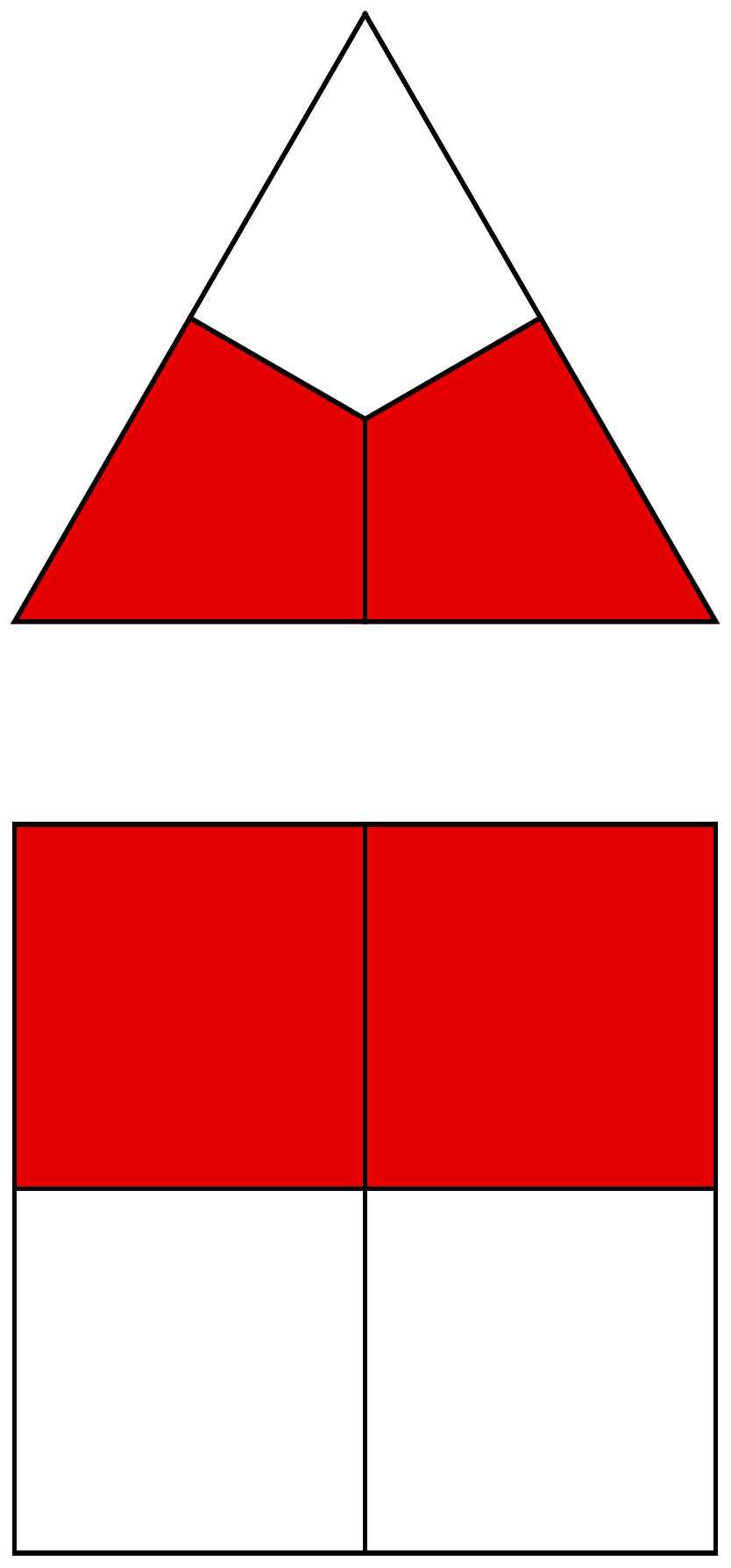}\label{fig:unstructured-charts-a}}
\hspace{0.05\textwidth}
\subfigure[]{\includegraphics[height=0.2\textheight]{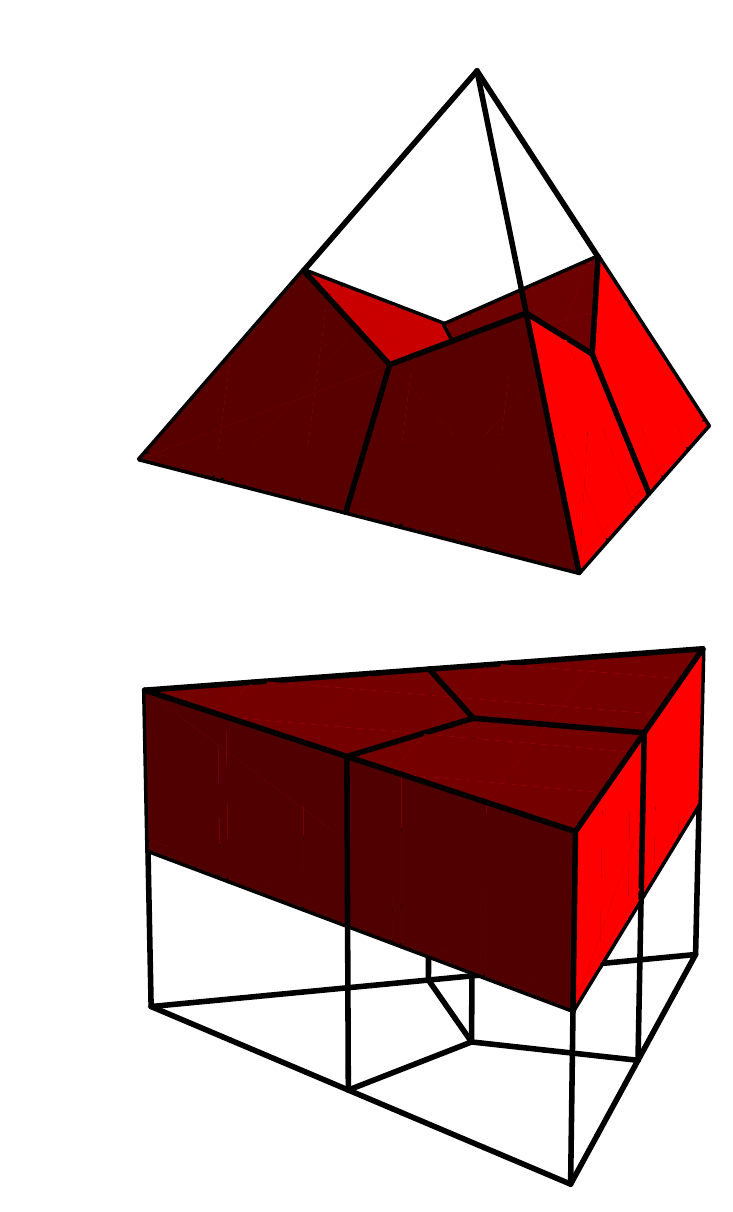}\label{fig:unstructured-charts-b}}
\hspace{0.05\textwidth}
\subfigure[]{\includegraphics[height=0.2\textheight]{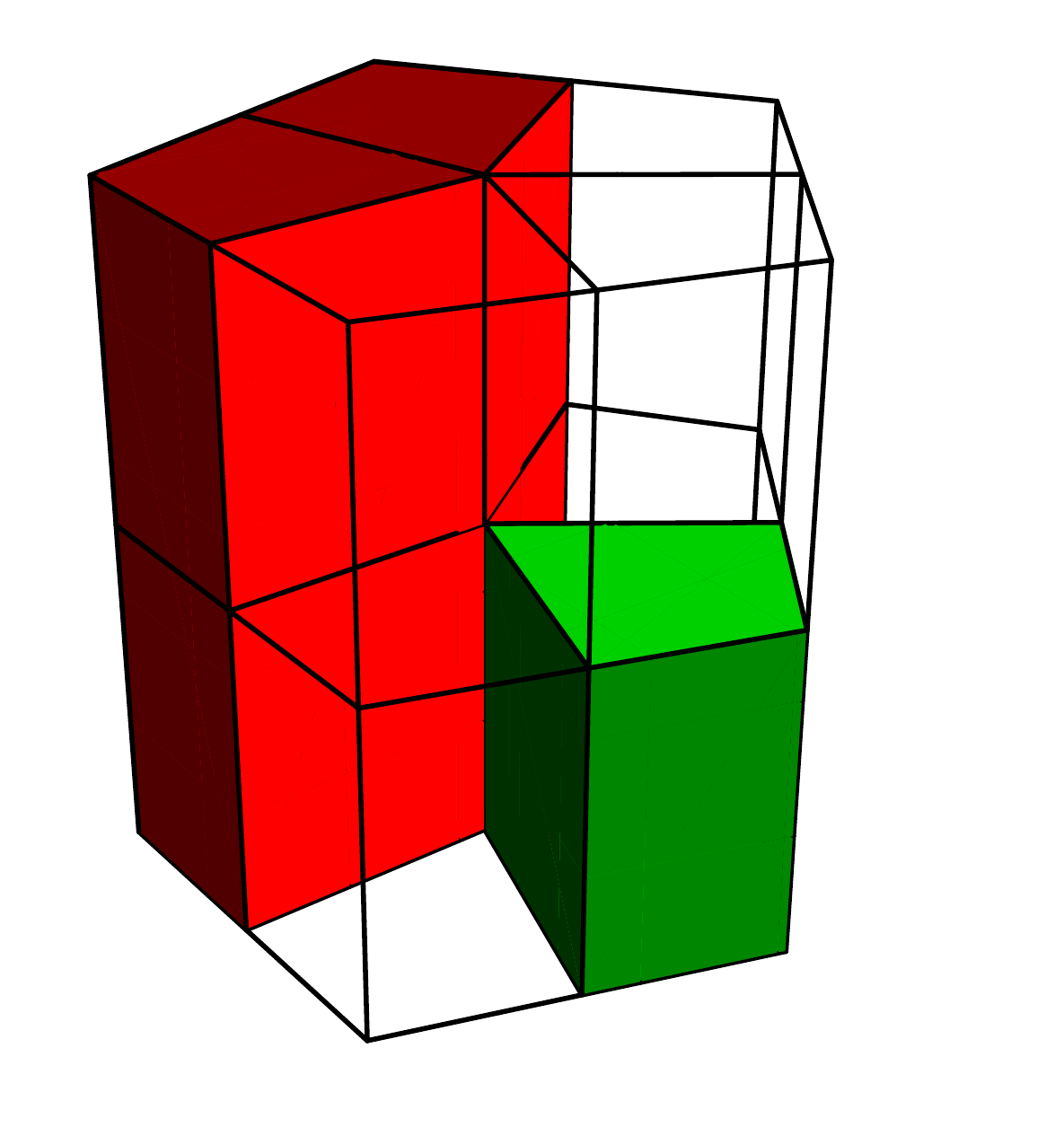}\label{fig:unstructured-charts-c}}
    \caption{Different types of unstructured charts}\label{fig:unstructured-charts}
\end{figure}

Figure \ref{fig:unstructured-charts-a} depicts a two-dimensional unstrctured vertex chart $\omega_i$ of valence 3 and a structured chart, as well as the corresponding transition domains. Since the transition domains are open, 
it can be observed easily, that in this case 3 structured charts are needed to cover $\omega_i$.
Figure \ref{fig:unstructured-charts-b} depicts a three-dimensional unstructured vertex chart $\omega_i$ of valence 4 as well as an unstructured edge chart of valence 3 and corresponding transition domains. Again, we may observe that $\omega_i$ (except for the extraordinary vertex) can be covered by the transition domains of 4 unstructured edge charts. Moreover, $\omega_i$ (except for the extraordinary vertex and edges) can be covered with 6 structured charts. 
Figure \ref{fig:unstructured-charts-c} depicts an unstructured edge chart of valence 5 and two possible 
transition domains. Note that all the transition domains are mapped box-meshes, due to the 
assumptions above. 
In all the examples presented here we assume that for each unstructured vertex chart each segment is meshed with exactly one element. This is not necessarily the case, as we presented in Figure \ref{fig:uv-charts}. Note that, by definition, the transition domains with structured charts can cover no more than 2 segments of an unstructured chart.

Note that the types of charts we consider are sufficient to represent most meshes of practical interest. 
Given an arbitrary quad- or hex-mesh without hanging vertices or edges, 
a global bisection of the mesh can be covered by structured charts, as in Definition \ref{def:structured-chart}, and unstructured charts, as in Definitions \ref{def:unstructured-chart-2d}, \ref{def:unstructured-edge-chart-3d} and \ref{def:unstructured-vertex-chart-3d}. 

This statement becomes clear, when looking at the types of vertices and edges that can occur in the bisected mesh. We consider only 3D meshes in the following. 
We show that every vertex of the bisected mesh can be covered by 
a valid chart of one of the three categories. Every vertex of the initial hex-mesh is one of the three: a structured vertex, 
a partially unstructured vertex or a fully unstructured vertex. The bisection of the initial mesh introduces new vertices 
from midpoints of edges, faces and hex-elements (one point for each edge, face and element). 
The midpoints of structured edges, faces and elements become 
structured vertices. The midpoints of unstructured edges become partially unstructured vertices. Hence no new 
fully unstructured vertices are introduced. All structured vertices can be covered by structured charts, all 
partially unstructured vertices can be covered by unstructured edge charts and all fully unstructured vertices can be covered by 
unstructured vertex charts. It is easy to see that, due to the bisection, the closure of any element can contain at most 
one extraordinary vertex. All other assumptions are trivially fulfilled.

\section{Spline manifold space on the parameter manifold}\label{sec:splinemanifoldspaces}

In this section we introduce spline spaces over the mesh $\T$ on $\Omega$, in short \emph{spline manifold spaces}.  

\subsection{General spline manifold spaces} \label{sec:general-space}

Again based on \cite{grimm1995modeling}, we define a spline space on a parameter manifold using the charts. 
This is achieved by defining proto-basis functions on the proto-mesh (Definition \ref{defi:proto-basis-functions}) and transfering 
them onto the parameter manifold using the equivalence relation induced by the transition functions (Definition \ref{defi:basis-functions-Omega}). Each chart $ \omega_i$ plays the role of a local parameter
domain. 
\begin{definition}[Proto-basis functions]\label{defi:proto-basis-functions}
We define a \emph{proto-basis} as a set 
\begin{equation}
  \label{eq:proto-splines}
  \left \{  \{b _{\bA_i}: \omega_i \rightarrow \R\}_{ \bA_i \in \A_i} \right \}_{i=1,\ldots,N},
\end{equation}
where all \emph{proto-basis functions} $b _{\bA_i}$, with $\bA_i\in\A_i$, are linearly independent functions defined on $\omega_i$. 
We further assume that for each $i=1,\ldots,N$ and $\bA_i \in \A_i$ the function $b _{\bA_i}$ fulfills 
\begin{equation}
  \label{eq:support-of-proto-basis-functions}
  \lim_{\bzeta \rightarrow \partial    \omega_i} b _{\bA_i}(\bzeta) = 0. 
\end{equation}
\end{definition}

\begin{assumption}
The proto-basis functions $b_{\bA_i}$ with $\bA_i \in \A_i$ are piecewise polynomials with 
respect to $\tau_i$, i.e. $b_{\bA_i} |_q$ is polynomial for all
$q\in\tau_i$.  
\end{assumption}

\begin{definition}[Spline manifold space]\label{defi:basis-functions-Omega}
  For each $ i=1,\ldots,N$ and $\bA_i \in \A_i $ we define $	B
  _{\bA_i} : \Omega \rightarrow \R$ such that
\begin{equation}
\begin{array}{lll}
	B _{\bA_i}|_{\Omega_i}  &=& b _{\bA_i} \circ \pi_i^{-1} \\
	B _{\bA_i}|_{\Omega \backslash \Omega_i}  &=& 0, 
\end{array}\label{eq:zero-extension}
\end{equation}
and set
\begin{equation}
  \label{eq:basis-function-set-i}
  \B_i = \{ B _{\bA_i} : \Omega \rightarrow \R, \bA_i \in \A_i \}.
\end{equation}
Furthermore, we introduce the global
index set
\begin{equation}
\label{eq:global-A}
  \A  = \left ( \bigsqcup_{i=1,\ldots,N} \A_i\right )
  \bigg/\approx 
\end{equation}
where the equivalence relation $ \approx $ is defined as follows: given $ [\bA_i,i] $ and $
[\bA_j,j]$ in $\bigsqcup_{i=1,\ldots,N}  \A_i $, then $   [\bA_i,i] \approx
[\bA_j,j] $ if and only if the two functions $B_{\bA_i} \in \B_i$ and
$ B_{\bA_j} \in \B_j$ coincide. Therefore, 
$ B _{\bA}: \Omega \rightarrow \R$ is well defined for $\bA \in \A  $, and we set 
\begin{equation}
  \label{eq:spline-global-set}
  \B= \left \{  B _{\bA} ,  \bA  \in    \A \right \} \equiv \bigcup_{i=1,\ldots,N} \B_i. 
\end{equation}
Moreover, we set
\begin{equation}
  \label{eq:extended-basis-function-set-i}
  \tilde \B_i = \{ B _{\bA}|_{\Omega_i } , \, \bA \in \tilde \A_i \},
\end{equation} 
where
\begin{equation}
  \label{eq:extended-basis-function-A-i}
  \tilde \A_i = \{ \bA \in \A : \, \text{supp}(B _{\bA}) \cap
  \text{supp}(B_{\bA_i}) \neq \emptyset \text{ for some }\bA_i \in \A_i \}.
\end{equation}
The set  $ \tilde \B_i  $ is the restriction of $\B$ onto $\Omega_i$ containing all the
functions in $\B_i$ and also  the restriction of any function whose support
intersects $\text{supp}(\B_i)$ but is not included in $\B_i$.  The functions
in $\tilde \B_i $ can be pulled back to the chart $\omega_i$.

Finally, the span of functions in \eqref{eq:spline-global-set}  is the \emph{spline manifold 
space}, the spline space on the parameter manifold $ \Omega$, denoted by  
\begin{equation}
  \label{eq:spline-space}
  \mathcal{S}= \text{span}\left \{  B _{\bA} ,  \bA  \in    \A \right
  \}. 
\end{equation}
\end{definition}
With some abuse of notation, for the global index $\bA \in \A$ we will say that $\bA \in \A_i$ if there exists an index $\bA_i \in \A_i$ such that its equivalence class through $\approx$ is equal to $\bA$.

To distinguish different types of functions, we introduce the notation
\begin{equation}\label{eq:AS-AE-AV_definition}
\begin{aligned}
	\A_s &= \bigcup_{\omega_i \text{ is  structured}} \A_i,\\  
	\A_e &= \left( \bigcup_{\omega_i \text{ is unstr. edge}} \A_i \right )\setminus \A_s,\\  
	\A_v &= \left( \bigcup_{\omega_i \text{ is unstr. vertex}} \A_i \right )\setminus \left ( \A_s \cup \A_e \right )  ,
\end{aligned}
\end{equation}
where $\A_s$ is the index set of structured functions, $\A_e$ is the index set of edge (or partially) unstructured functions and $\A_v$ is the index set of vertex (or fully) unstructured functions. It is clear that 
\begin{equation}
	\label{eq:AS-AE-AV_split}
	\A_s \cap  \A_e =\A_s \cap  \A_v =\A_e \cap  \A_v =\emptyset.
\end{equation}

 \begin{remark}
  Equation \eqref{eq:support-of-proto-basis-functions} together with \eqref{eq:zero-extension} guarantees
    that the functions are globally continuous. If discontinuous
    functions are allowed, the condition \eqref{eq:support-of-proto-basis-functions} can be omitted.
\end{remark}

\subsection{B-Spline manifold spaces}

We assume that each function that is completely supported in one chart is a function of that chart. Moreover, we assume that each structured chart is completely covered by the supports of its functions.
\begin{assumption}\label{assu:covering}
For each $i = 1,\ldots,N$ and each $\bA \in \A$ we have that if 
\begin{equation*}
	\text{supp}(B_{\bA}) \subseteq \Omega_i
\end{equation*}
then $\bA \in \A_i$. If $\omega_i$ is a structured chart as in Definition~\ref{def:structured-chart}, then 
\begin{equation*}
	\bigcup_{\bA_i \in \A_i} \text{supp}(B_{\bA_i}) = \Omega_i.
\end{equation*}
Moreover, if the support of a function $B_{\bA}$ is structured, then there exists a structured chart that covers the support.
\end{assumption}
Hence we conclude that the set $\B_i$ contains all functions that are completely supported in $\Omega_i$ and the set $\tilde\B_i$ contains all functions that have a support intersecting with $\Omega_i$.

For simplicity we assume that the functions have the same degree $p$ in each direction on each structured chart.
\begin{assumption}\label{assu:structured-chart-functions}
If $\omega_i$ is a structured chart as in Definition~\ref{def:structured-chart}, then each function $B_{\bA}$ in $ \tilde \B_i $ is  a tensor-product B-spline of degree $p$ when restricted to $\omega_i$,
i.e., there exist  (local) knot vectors $\Xi_{\bA,i,1},\ldots,\Xi_{\bA,i,d}$ such that  
\begin{equation}
  \label{eq:tp-B-spline-defn}
  B _{\bA}\circ \pi_i (\bzeta) =  b[\Xi_{\bA,i,1}](\zeta_1)
\ldots  b[\Xi_{\bA,i,d}](\zeta_d), \qquad \forall  \bA \in \tilde \A_i,\forall \bzeta \in \omega_i,
\end{equation}
where $  b[\Xi](\zeta)$ is the univariate B-spline with local knot
vector $\Xi$. In this notation the degree $p$ of the B-spline is given implicitly by the length of the local knot vector.
\end{assumption}

Finally, the previous assumption is extended to unstructured edge charts, since they behave like structured charts along the third parametric direction, and like unstructured vertex charts along the first two parametric directions.
\begin{assumption}\label{assu:unstructured-edge-chart-3d}
If $\omega_i$ is an unstructured edge chart as in Definition \ref{def:unstructured-edge-chart-3d}, then each function $B_{\bA}$ in $ \tilde \B_i $ is a product of a bivariate function and a B-spline of degree $p$ in $\zeta_3$ when restricted to $\omega_i$, i.e. there exist a function $\beta_{\bA,i}$ and a local knot vector $\Xi_{\bA,i,3}$ such that
\begin{equation}\label{eq:edge-chart-B-spline-defn} 
	B _{\bA}\circ \pi_i (\bzeta) = \beta_{\bA,i}(\zeta_1,\zeta_2) b[\Xi_{\bA,i,3}](\zeta_3), \qquad \forall  \bA \in \tilde \A_i,\forall \bzeta \in \omega_i.
\end{equation}
\end{assumption}

\begin{remark}\label{rem:extraordinary-functions-defined-on-structured-charts}
From the definition of the unstructured charts $\omega_j$, and under Assumption~\ref{assu:structured-chart-functions}, we have that the  functions   $B
  _{\bA}$, with $\bA \in \A_j$,  are fully  defined by their 
restrictions to the transition domains $\omega_{j,i}$ with structured charts. On each $\omega_{j,i}$ the functions in $\B_j \cap \tilde\B_i$
are equivalent to (mapped) tensor-product B-splines according to \eqref{eq:tp-B-spline-defn}.
\end{remark}

\subsection{Isogeometric function spaces using spline manifolds}

We consider a domain $\Sigma \subset \R^n$ which can be 
interpreted as a $d$-dimensional manifold with $d\leq n$. The most interesting cases, and the ones we focus on, are 
a two-dimensional planar domain $\Sigma \subset
\R^2$, a surface $\Sigma \subset \R^3$ or a three-dimensional volumetric domain $\Sigma \subset
\R^3$.

\begin{definition}\label{defi:G}
The physical domain $\Sigma$ is given by a \emph{spline manifold parametrization} ${\bf G} \in ({\cal S})^n$, with 
\begin{equation}\label{eq:G}
 \f G : \Omega \rightarrow \Sigma.
\end{equation}
\end{definition}
In practical situations, this parametrization is defined by associating a control point to each function in ${\cal B}$. Notice that the geometry $\Sigma$ inherits the manifold structure of the parameter manifold $\Omega$.
Indeed, the parametrization $\f G$ can be considered as a piecewise defined function, where 
$ \f G_i :  \Omega_i \rightarrow \Sigma_i$ is a tensor-product B-spline parametrization for every structured chart $\Omega_i$. Then we have 
\begin{displaymath}
  \Sigma = \bigcup_{i=1}^{N} \Sigma_{i},
\end{displaymath}
where the charts $\Sigma_i$ form an \emph{atlas} of $\Sigma$.

Then, the isogeometric function space over the manifold $\Sigma$ is given as follows.
\begin{definition}
Given a \emph{spline manifold parametrization} $\f G\in (\S)^n$ as in Definition~\ref{defi:G}, we define on the manifold $\Sigma$ the \emph{isogeometric function space} as 
\begin{equation}
  \label{eq:isogeometric_space}
  \V = \{ f:\Sigma \rightarrow \R, \, \text{  such that }\, f = \hat f \circ \f G^{-1} \text{ for } \hat f \in
  \S \}.
\end{equation}
The isogeometric space is well-defined, if the geometry parametrization is invertible. {We could move here the assumption on $det \nabla \bG^T \nabla \bG$.}
\end{definition}

Similar to the spline manifold space $\S$ itself, the isogeometric space $\V$ can be interpreted as a piecewise defined function space. Indeed, each function in ${\cal B}_i$ can be composed with $\f G^{-1}$ to define the corresponding function in $\Sigma$, with its support contained in the chart $\Sigma_i$.

\subsection{Relation to existing constructions}

We note that several constructions of unstructured spline spaces existing in the literature fit in the framework of B-spline manifolds, in some cases with minor modifications. For instance, the definition of unstructured T-splines as presented in \cite{wang2011converting,wang2012converting} for quad-meshes and hex-meshes, the $G^1$-continuous unstructured T-splines as presented in \cite{Scott2013}, as well as multi-patch B-splines with enhanced smoothness as developed in \cite{Buchegger2015}. The development of an abstract framework for the construction of these spaces can serve as the starting point for a deeper mathematical analysis of their properties. In the following we detail how these three particular examples fit into the framework of B-spline manifolds. In all three cases, the idea is to split the set of basis functions into structured and unstructured basis functions, and introduce a set of charts covering the whole mesh. In this context, a function is called structured, if its support is covered by a structured mesh (possibly with hanging nodes). Otherwise it is called unstructured. 

\paragraph{T-splines over unstructured meshes \cite{wang2011converting}}
    In this configuration the mesh $\T$ we 
  consider on $\Omega$ is the  B\'ezier mesh and not the T-mesh. 
  Then, we can define the set of charts by taking the support 
  of each structured function as a single structured chart, 
  while  the support of each unstructured function is taken 
  as one unstructured vertex chart. In the construction of 
  \cite{wang2011converting} it may happen that one element contains
  two extraordinary vertices. In this case two unstructured 
  vertex charts overlap, violating Definition~\ref{def:unstructured-chart-2d} (C). 
  This condition is a technicality which simplifies  the mathematical 
  framework, and could be removed. It can also be fulfilled with one 
  level of refinement of the T-mesh, as we explained in 
  Section~\ref{sec:meshing}. In a similar way, the 
  constructions for trivariate functions as presented in \cite{wang2012converting} fit, with some 
  technical  restrictions, into the present framework.

\paragraph{$G^1$-smooth T-splines over unstructured meshes \cite{Scott2013}}
 This construction is similar to the construction in \cite{wang2011converting}. The support of each structured function can again be interpreted as a single structured chart. For each extraordinary vertex we can define one sufficiently large unstructured chart $\omega_j$, such that it covers the support of all functions that are non-zero at the extraordinary vertex. In this case, the unstructured functions do not fulfill Assumption \ref{assu:structured-chart-functions}, since their degree is increased in the vicinity of the extraordinary vertex, and their restriction to a structured chart is not a B-spline basis function as in \eqref{eq:tp-B-spline-defn}, but a suitable linear combination of B-splines of higher degree.   

\paragraph{Multi-patch B-splines with enhanced smoothness \cite{Buchegger2015}}
 The authors propose a construction based on a multi-patch representation of the domain. This can be interpreted as a manifold structure with closed charts (patches), that intersect only at the boundary. 
However, since there is a one-to-one correspondence of parameter directions along any shared boundary between two patches, the multi-patch representation can be transformed naturally to a parameter manifold representation by simply enlarging the patches along the parameter direction crossing the boundary to obtain open charts. Note that in general it is necessary to define more than one chart to cover one patch. Again, there are unstructured functions at the extraordinary vertices, and unstructured vertex charts have to be introduced to cover their support.

\section{Analysis-suitable spline manifold spaces}
\label{sec:analysis-suitable}

We introduce in this section the conditions for the construction of analysis-suitable B-spline spaces on manifolds, that is, spaces that have good properties for the solution of differential problems. The key tool for this construction is the definition of a (stable) dual basis, which is a set of functionals $\{\Lambda_\bA, \bA \in \A \}$ such that
\begin{equation*}
\Lambda_\bA(B_{\bA'}) = \delta_{\bA \bA'}, \quad \forall \bA, \bA' \in \A,
\end{equation*}
where $\delta_{\bA \bA'}$ represents the Kronecker delta. A condition for the construction of a dual basis for structured T-splines was given in \cite{beirao2012analysis,beirao2013analysis}, under the name of \emph{dual-compatibility}.

We present the construction of dual functionals on the parameter manifold~$\Omega$ in Section~\ref{sec:dual-functionals-general}, starting from a proto-dual basis on each chart, and then following the scheme of spline manifold spaces introduced in Section~\ref{sec:general-space}. To guarantee that these dual functionals form a dual basis we need to add some conditions to the spline manifold. In Section~\ref{sec:dual-compatible-condition} we give a \emph{dual-compatibility} condition for spline manifolds, which generalizes the condition in \cite{beirao2012analysis,beirao2013analysis} to the unstructured setting. In this configuration, a global dual basis can be derived from the proto-dual bases defined on the charts. Then, in Section~\ref{sec:dual-basis-construction} we present an explicit configuration, with a specific construction of the proto-basis functions and the proto-dual basis on
unstructured vertex and edge charts. In this configuration, which is only $C^0$-continuous at extraordinary features, the dual functionals in~$\Omega$ can be derived from the proto-dual basis without any further modification, which also guarantees the stability of the dual functionals.
Finally, in Section~\ref{sec:properties-dual-compatible} we show the
typical application of a dual basis: we prove otpimal approximation  properties
of the isogeometric space on  a simple but interesting example configuration.

\subsection{The dual basis for spline manifold spaces} \label{sec:dual-functionals-general}

In the following we use this well-known result.
\begin{proposition}\label{proposition:trivial-dual-basis}
  Given a set of finitely many $L^2$  functions $\{ b_\alpha\} $ that are linearly
  independent, there exist  functionals $\lambda_\alpha:L^2
  \rightarrow \R$ that are dual to the functions, i.e., $\lambda_\alpha(b_{\alpha'})=
  \delta_{\alpha \alpha'}$.
\end{proposition}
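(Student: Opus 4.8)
The plan is to prove Proposition~\ref{proposition:trivial-dual-basis} by a standard finite-dimensional linear-algebra argument applied inside the Hilbert space $L^2$, combined with the Riesz representation theorem so that the dual functionals can be taken to act on all of $L^2$, not merely on the span of the $b_\alpha$.

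First I would set $n$ to be the (finite) number of functions and let $W = \mathrm{span}\{b_1,\ldots,b_n\} \subseteq L^2$. Since the $b_\alpha$ are linearly independent, $\dim W = n$ and $\{b_\alpha\}$ is a basis of $W$. On the finite-dimensional space $W$ one can simply define linear functionals $\mu_\alpha : W \to \R$ as the coordinate functionals with respect to this basis, i.e.\ $\mu_\alpha\bigl(\sum_{\beta} c_\beta b_\beta\bigr) = c_\alpha$; these are well defined precisely because the representation in the basis is unique, and they satisfy $\mu_\alpha(b_{\alpha'}) = \delta_{\alpha\alpha'}$ by construction. Each $\mu_\alpha$ is automatically bounded on $W$ since $W$ is finite-dimensional (all norms are equivalent, and $W$ is closed in $L^2$).

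Next I would extend each $\mu_\alpha$ from $W$ to all of $L^2$. The cleanest route is to use the orthogonal projection $P : L^2 \to W$ onto the closed subspace $W$ and set $\lambda_\alpha := \mu_\alpha \circ P$. Then $\lambda_\alpha : L^2 \to \R$ is linear and bounded, and for $b_{\alpha'} \in W$ we have $P b_{\alpha'} = b_{\alpha'}$, hence $\lambda_\alpha(b_{\alpha'}) = \mu_\alpha(b_{\alpha'}) = \delta_{\alpha\alpha'}$, as required. (Alternatively one could invoke Hahn--Banach to extend each bounded functional $\mu_\alpha$ from $W$ to $L^2$, or exhibit $\lambda_\alpha$ explicitly via the Gram matrix $M_{\alpha\beta} = (b_\alpha, b_\beta)_{L^2}$, which is invertible by linear independence, giving $\lambda_\alpha(\cdot) = \sum_\beta (M^{-1})_{\alpha\beta}(b_\beta, \cdot)_{L^2}$; this last form is the most concrete and makes boundedness and the duality relation transparent.)

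There is no real obstacle here: the statement is essentially the assertion that a finite linearly independent set extends to a dual system, which is elementary. The only point requiring a word of care is that the functionals must be defined on all of $L^2$ rather than on $W$ alone, and this is handled either by the orthogonal projection $P$, by Hahn--Banach, or by the explicit Gram-matrix formula; finite-dimensionality of $W$ guarantees that $W$ is closed, so $P$ exists and everything is bounded. I would present the Gram-matrix construction as the main argument since it is constructive and self-contained, and remark that boundedness, though not strictly needed for the bare statement, comes for free.
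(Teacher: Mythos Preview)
Your argument is correct. The paper, however, does not prove this proposition at all: it is introduced with the phrase ``we use this well-known result'' and is stated without proof, serving only as an existence statement to be invoked later (in the proof of Theorem~\ref{thm:linear-independence} and in Definition~\ref{def:proto-dual}). Your write-up therefore supplies strictly more than the paper does; the Gram-matrix construction you propose is a clean and standard way to make the existence explicit, and the remark that boundedness comes for free is a nice bonus even though the paper's statement does not require it.
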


As for the definition of the basis functions for spline manifold spaces introduced in Section~\ref{sec:general-space}, the starting point is a set of dual-functionals on each chart, that exist thanks to the previous proposition and Definition~\ref{defi:proto-basis-functions}.
\begin{definition}[Proto-dual basis] \label{def:proto-dual}
We define a \emph{proto-dual basis} as a set
\begin{equation*}
\{ \{ \hat\lambda_{\bA_i}: L^2(\omega_i) \rightarrow \R \}_{\bA_i \in \A_i} \}_{i=1, \ldots, N}, 
\end{equation*}
where the functionals $\hat\lambda_{\bA_i}$ form a dual basis for the proto-basis functions in the chart, that is
\begin{equation*}
\hat\lambda_{\bA_i}(b_{\bA'_i}) = \delta_{\bA_i \bA'_i}, \quad \forall \bA_i, \bA_i' \in \A_i.
\end{equation*}
\end{definition}
Note that the existence of a proto-dual basis is guaranteed by Proposition \ref{proposition:trivial-dual-basis}, since the proto-basis functions $b_{\bA_i}$ are linearly independent. We will assume that the proto-dual functionals satisfy the following.

\begin{assumption}\label{ass:dual-functionals}
For any indices $[\bA_i,i] \approx [\bA_j,j]$ that belong
to the same equivalence class $\bA \in \A$, defined as
in~\eqref{eq:global-A}, it holds that  $\hat\lambda_{\bA_i} (\phi\circ \pi_i) =
\hat\lambda_{\bA_j} (\phi\circ \pi_j), \quad \forall \phi \in L^2(\Omega)$.
\end{assumption}

The assumption that two equivalent indices are associated to the same
dual functional is natural, since by~\eqref{eq:global-A} they are also
associated to the same basis function, and it allows the following global
definition.

\begin{definition}[Manifold functionals] \label{def:dual-functionals} For any $\bA \in \A$ we define the functional
\begin{equation} \label{eq:dual-functionals}
\hat\Lambda_\bA(\phi) = \hat\lambda_{\bA_i} (\phi\circ \pi_i), \quad \forall \phi \in L^2(\Omega),
\end{equation}
where $[\bA_i,i]$ is one instance of the equivalence class $\bA$.
\end{definition}
From \eqref{eq:dual-functionals},  the support of the dual functional
$\hat \Lambda_{\bA}$ is contained  in  $\Omega_i$. The most used dual
bases for splines, such as the one by Schumaker \cite{Schumi} and the
ones by Lee, Lyche and M{\o}rken \cite{LLM01} satisfy  the stronger condition
that  the support of $\hat \Lambda_{\bA}$  is the same of the corresponding function $B_{\bA}$.

Note that in general the dual functionals $\{\hat\Lambda_\bA, \bA \in \A\}$ do not form a dual basis for ${\cal B} = \{B_{\bA}, \bA \in \A\}$. 
In the following section we introduce a new condition that ensures that a dual basis can be derived by a modification of the dual functionals $\hat \Lambda_\bA$.

\subsection{Dual-compatible spline manifold spaces}
\label{sec:dual-compatible-condition}

For the definition of the dual-compatibility condition we follow and
extend the recent review paper \cite{beirao2014actanumerica}. 
Two knot vectors $\Xi' =\{ \xi'_1, \ldots,
\xi'_{p+2}\}$ and $\Xi'' =\{\xi''_1, \ldots, \xi''_{p+2}\}$ \emph{overlap}  if there exists a  knot vector  $\Xi =\{ \xi_1, \ldots,
\xi_{k}\}$ and two integers  $k'$ and $k''$ such that $\xi'_i = \xi_{i+k'}$ and $
\xi''_i = \xi_{i+k''}$, for  $i = 1, \ldots, p+2$. 
We generalize the dual-compatibility condition to spline manifold
spaces in the following definition. 
\begin{definition}[Dual-compatible spline manifold spaces]\label{def:dual-compatible}
Under the previously stated assumptions, 
the set $\B$ defined in  \eqref{eq:spline-global-set} is
 dual-compatible if the following conditions hold
\begin{enumerate}
\item  for all $ \bA \in \A_i \subset  \A_s $ and $ \forall \bA' \in
  \widetilde \A_i $  there exists an index $l \in \{1,\ldots,d\}$, such that the  knot vectors $\Xi_{\bA,i,l}$ and $\Xi_{\bA',i,l}$ as in
  \eqref{eq:tp-B-spline-defn} are different and overlap;
\item  for all $  \bA \in \A_i \cap  \A_e $, with $\omega_i$ being an unstructured edge chart, and for all $ \bA' \in
  \A_i$, either  the knot vectors $\Xi_{\bA,i,3}$ and $\Xi_{\bA',i,3}$
  as in \eqref{eq:edge-chart-B-spline-defn} overlap and are different,
  or $\beta_{\bA,i} \neq \beta_{\bA',i}$;
\item  for all $  \bA \in \A_i \cap  \A_e $, with $\omega_i$ being an unstructured edge chart, and for all $ \bA' \in
  (\widetilde \A_i \setminus \A_i ) \cap  (\A_e  \cup  \A_v  )$ the knot vectors $\Xi_{\bA,i,3}$ and $\Xi_{\bA',i,3}$
  as in \eqref{eq:edge-chart-B-spline-defn} overlap and are different.
\end{enumerate}
Here $\A_s$, $\A_e$ and $\A_v$ are defined as in \eqref{eq:AS-AE-AV_definition}.
\end{definition}

\cite{beirao2014actanumerica} (as well as the previous papers
\cite{beirao2012analysis,beirao2013analysis}) the authors deal only with the
structured case, and actually the dual-compatibility condition in 
\cite{beirao2014actanumerica} corresponds to point~1 in 
Definition~\ref{def:dual-compatible}. The new definition extends the 
dual-compatibility condition to an unstructured configuration. 
As in \cite{beirao2014actanumerica},  Definition
\ref{def:dual-compatible} gives a sufficient condition for 
the existence of a dual basis. For that, we use two technical
ingredients: the first is a univariate $L^2$-stable dual functional $\lambda [\Xi]$, such that, if $\Xi$ and $\Xi'$ are overlapping, 
\begin{equation}\label{eq:1D-dual-fun}
 \lambda [\Xi] (b [\Xi']) = \left \{
    \begin{aligned}
      & 1 \text{ if }\Xi = \Xi',\\
      & 0 \text{ if } \Xi \neq \Xi';
    \end{aligned}
\right  .
\end{equation}
see, for example, \cite{Schumi,LLM01}; the second is the existence of a proto-dual basis on each chart, guaranteed by Proposition~\ref{proposition:trivial-dual-basis}.
Our main result follows.
\begin{theorem}\label{thm:linear-independence}
If the set $\B$ is dual-compatible, then there exists a set of functionals 
  \begin{equation*}
  \B^*= \left \{  \Lambda _{\bA} : L^2(\Omega) \rightarrow
\mathbb{R} \text{ such that }\bA  \in    \A \right \},
\end{equation*}
that is dual to  $\B$, i.e., 
\begin{equation} \label{eq:orthogonal-all}
\forall \bA,\bA' \in \A, \,\Lambda_{\bA}(B_{\bA'}) = \delta_{\bA \bA'}.
\end{equation}
\end{theorem}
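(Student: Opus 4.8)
The strategy is to build the dual functionals $\Lambda_\bA$ as corrections of the manifold functionals $\hat\Lambda_\bA$ from Definition~\ref{def:dual-functionals}, exploiting the local tensor-product structure on each chart to kill the unwanted off-diagonal terms. Recall that $\hat\Lambda_\bA$ is supported in a single chart $\Omega_i$ (the one giving a representative $[\bA_i,i]$), so it suffices to analyze $\hat\Lambda_\bA(B_{\bA'})$ for $\bA' \in \widetilde\A_i$; for all other $\bA'$ the pairing already vanishes. The plan is to treat the three types of indices --- $\bA \in \A_s$, $\bA \in \A_e$, $\bA \in \A_v$ --- in turn, using the three clauses of Definition~\ref{def:dual-compatible}, and in each case to replace $\hat\Lambda_\bA$ by a functional that acts (via pullback to the chart) as a tensor product of the univariate stable dual functionals $\lambda[\Xi]$ from~\eqref{eq:1D-dual-fun} in the ``separating'' directions, composed with a proto-dual functional in the remaining directions.

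\emph{Structured indices.} For $\bA \in \A_i \subset \A_s$, by Assumption~\ref{assu:structured-chart-functions} every $B_{\bA'}$ with $\bA' \in \widetilde\A_i$ pulls back to a pure tensor-product B-spline $\prod_{l=1}^d b[\Xi_{\bA',i,l}]$. Dual-compatibility clause~1 gives, for each such $\bA' \neq \bA$, an index $l$ with $\Xi_{\bA,i,l} \neq \Xi_{\bA',i,l}$ overlapping. I would define $\Lambda_\bA(\phi) = \big(\bigotimes_{l=1}^d \lambda[\Xi_{\bA,i,l}]\big)(\phi\circ\pi_i)$; then $\Lambda_\bA(B_{\bA}) = \prod_l \lambda[\Xi_{\bA,i,l}](b[\Xi_{\bA,i,l}]) = 1$, and for $\bA' \neq \bA$ the chosen factor $l$ yields $\lambda[\Xi_{\bA,i,l}](b[\Xi_{\bA',i,l}]) = 0$ by~\eqref{eq:1D-dual-fun}, so the whole product is zero. (There is a subtlety: $\lambda[\Xi']$ is only guaranteed to annihilate $b[\Xi'']$ when the two knot vectors overlap; I would note that the relevant knot vectors all sit inside a common knot vector on the structured chart, so overlap holds pairwise --- this is exactly the argument used in \cite{beirao2013analysis,beirao2014actanumerica}.)

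\emph{Edge and vertex indices.} For $\bA \in \A_e$ with $\omega_i$ an unstructured edge chart, Assumption~\ref{assu:unstructured-edge-chart-3d} factors every relevant basis function as $\beta_{\bA',i}(\zeta_1,\zeta_2)\,b[\Xi_{\bA',i,3}](\zeta_3)$. I would set $\Lambda_\bA(\phi) = \big(\hat\lambda^{(12)}_{\bA_i} \otimes \lambda[\Xi_{\bA,i,3}]\big)(\phi\circ\pi_i)$, where $\hat\lambda^{(12)}_{\bA_i}$ is the bivariate component of the proto-dual functional acting on the $\beta$-factors (which form a linearly independent set by Definition~\ref{defi:proto-basis-functions}, so Proposition~\ref{proposition:trivial-dual-basis} applies). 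Clause~2 handles $\bA' \in \A_i$: either the third-direction knot vectors separate (killed by $\lambda[\Xi_{\bA,i,3}]$) or $\beta_{\bA,i}\neq\beta_{\bA',i}$ (killed by the bivariate proto-dual functional). Clause~3 handles the remaining $\bA' \in (\widetilde\A_i \setminus \A_i)\cap(\A_e\cup\A_v)$ via the third direction only. Functions $\bA'\in\widetilde\A_i$ whose support is structured are covered by Assumption~\ref{assu:covering} (they live on a structured chart) and reduce to the previous case. Finally, for $\bA\in\A_v$ on an unstructured vertex chart: by Definition~\ref{def:unstructured-chart-2d}(C) / Definition~\ref{def:unstructured-vertex-chart-3d}(C) such charts do not overlap other unstructured charts, and by Remark~\ref{rem:extraordinary-functions-defined-on-structured-charts} the functions are determined by their restrictions to structured transition domains; here I would simply keep $\Lambda_\bA = \hat\Lambda_\bA$, the raw proto-dual functional, and argue that any $\bA'\in\widetilde\A_i$ with $\bA'\neq\bA$ is either in $\A_i$ (handled by the proto-dual property $\hat\lambda_{\bA_i}(b_{\bA'_i})=\delta$) or structured (so its pullback to $\omega_i$ is a B-spline whose local knot data can be compared, again using overlap).

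\emph{Well-definedness and the main obstacle.} I must check that the definition of $\Lambda_\bA$ is independent of the chosen representative $[\bA_i,i]$ of the class $\bA$ --- this follows from Assumption~\ref{ass:dual-functionals} in the $\hat\lambda$-based cases, and requires an analogous compatibility check for the tensor-product $\lambda[\Xi]$ functionals, which holds because overlapping knot vectors pull back consistently under the affine/linear transition maps (Definitions~\ref{def:structured-chart}(C), \ref{def:unstructured-edge-chart-3d}(C)). The main obstacle I anticipate is the \textbf{bookkeeping of the case split}: one must verify that the three clauses of dual-compatibility, together with Assumptions~\ref{assu:covering}--\ref{assu:unstructured-edge-chart-3d}, exhaust \emph{all} pairs $(\bA,\bA')$ with overlapping supports --- in particular pairs where $\bA$ is structured but $\bA'$ is unstructured (symmetric to clause~1 with roles of $i$ and the chart of $\bA'$ swapped), and pairs straddling two different edge charts near a fully unstructured vertex (controlled by Definition~\ref{def:unstructured-edge-chart-3d}(D), which forces such an overlap to sit inside a common structured chart, reducing it to the structured case). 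Making this exhaustiveness argument airtight, rather than the individual annihilation computations, is where the real work lies.
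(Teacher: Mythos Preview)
Your construction of the functionals $\Lambda_\bA$ coincides with the paper's construction of the \emph{preliminary} functionals $\hat\Lambda_\bA$: tensor products of $\lambda[\Xi]$ on structured charts, a bivariate proto-dual tensored with $\lambda[\Xi_{\bA,i,3}]$ on edge charts, and the raw proto-dual on vertex charts. The paper, however, does \emph{not} claim that these $\hat\Lambda_\bA$ already form a dual basis. What it proves is only the block-triangular set of relations
\[
\hat\Lambda_\bA(B_{\bA'}) = \delta_{\bA\bA'}\quad\text{for }\bA\in\A_s,\ \bA'\in\A;\qquad \bA\in\A_e,\ \bA'\in\A_e\cup\A_v;\qquad \bA\in\A_v,\ \bA'\in\A_v,
\]
and then applies an explicit Gram--Schmidt-type correction (subtracting $\sum_{\bA''}\hat\Lambda_\bA(B_{\bA''})\Lambda_{\bA''}$ over the already-handled index classes) to kill the remaining off-diagonal entries $\hat\Lambda_\bA(B_{\bA'})$ with $\bA\in\A_e\cup\A_v$ and $\bA'$ structured, or $\bA\in\A_v$ and $\bA'\in\A_e$. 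Your opening sentence announces such a correction, but your detailed plan never performs it.

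The direct-annihilation argument you give instead has a genuine gap. Take $\bA\in\A_e$ on an edge chart $\omega_i$ and a structured $\bA'\in(\widetilde\A_i\setminus\A_i)\cap\A_s$. None of the three clauses of Definition~\ref{def:dual-compatible} applies to this pair: clause~2 requires $\bA'\in\A_i$, clause~3 requires $\bA'\in\A_e\cup\A_v$, and clause~1 concerns $\Lambda_{\bA'}(B_\bA)$ rather than $\Lambda_\bA(B_{\bA'})$. So nothing forces either $\Xi_{\bA,i,3}\neq\Xi_{\bA',i,3}$ or $\hat\lambda^{(12)}_{\bA_i}(\beta_{\bA',i})=0$; your phrase ``reduce to the previous case'' provides no annihilation mechanism here. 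The vertex case is worse: for $\bA\in\A_v$ you set $\Lambda_\bA=\hat\Lambda_\bA$, the abstract proto-dual from Proposition~\ref{proposition:trivial-dual-basis}, which has no tensor-product structure, so the claim that for structured $\bA'$ one can ``compare local knot data, again using overlap'' is not meaningful for this functional. The dual-compatibility definition is deliberately asymmetric --- clause~1 ranges over all of $\widetilde\A_i$ precisely so that the \emph{structured} duals kill everything, producing a triangular system --- and the correction step you omitted is what inverts it.
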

\begin{proof}
First, we  construct dual functionals for the different types of charts.
Let $\omega_i$ be a structured chart. Given  $\bA\in \A_i\subset \A_s$ we set
\begin{equation}
  \label{eq:tp-Lambda-defn}
 \hat\lambda_{\bA}=  \lambda [\Xi_{\bA,i,1}]\otimes \ldots \otimes
 \lambda[\Xi_{\bA,i,d}],
\end{equation}
where the knot vectors $\Xi_{\bA,i,k}$ are given as in \eqref{eq:tp-B-spline-defn}. 
Let $\omega_i$ be an unstructured edge chart. 
Given  $\bA\in\A_i\cap \A_e$, let $\A_{i,\bA} \subset \A_i\cap \A_e$ be
the set of indices $\bA'$ such that $\Xi_{\bA,i,3} = \Xi_{\bA',i,3}$
as in \eqref{eq:edge-chart-B-spline-defn}. Since the set $\B_i$ is
linearly independent, the same holds for the set
\begin{equation}
  \label{eq:section}
  \{\beta_{\bA',i}: \;
\bA'\in\A_{i,\bA}\}
\end{equation}
and by Proposition
\ref{proposition:trivial-dual-basis}  there exist functionals $\{\hat
\lambda_{\bA',i}: \; \bA'\in\A_{i,\bA}\}$ that are dual to
\eqref{eq:section}.
 We define the proto-dual functional $\hat
 \lambda_{\bA}:\omega_i\rightarrow \R$
\begin{equation*} \label{eq:edge-Lambda-defn}
 \hat \lambda_{\bA}=  \hat \lambda_{\bA,i}\otimes 
 \lambda[\Xi_{\bA,i,3}],
\end{equation*}
Finally, let $\omega_i$ be an unstructured vertex chart. Consider $\A_i\cap \A_v$, and again by Proposition
\ref{proposition:trivial-dual-basis}   the set $\B_i$, which is assumed to be linearly independent, admits a  proto-dual basis  $\{\hat \lambda_{\bA}: \bA \in \A_i\cap \A_v\}$.
Having defined $\lambda_{\bA}$ for all $\bA \in \A$, we construct
the corresponding $\hat \Lambda_\bA$  as in
\eqref{eq:dual-functionals}.  

It is easy to check that, by the construction above and Definition
\ref{def:dual-compatible}, it holds
\begin{equation}
  \label{eq:As-Lambda-duality}
  \forall \bA \in \A_s,  \forall \bA' \in (\A_s \cup \A_e \cup \A_v) ,
  \quad \hat \Lambda_\bA (B_{\bA'}) = \delta_{\bA\bA'},
\end{equation}
\begin{equation}
  \label{eq:Ae-Lambda-duality}
  \forall \bA \in \A_e,  \forall \bA' \in ( \A_e \cup \A_v) ,
  \quad \hat \Lambda_\bA (B_{\bA'}) = \delta_{\bA\bA'},
\end{equation}
\begin{equation}
  \label{eq:Av-Lambda-duality}
  \forall \bA \in \A_v,  \forall \bA' \in \A_v,
  \quad \hat  \Lambda_\bA (B_{\bA'}) = \delta_{\bA\bA'}.
\end{equation}
As already noted, from 
Definition~\ref{def:dual-functionals} we also infer
\begin{equation}
  \label{eq:split-support}
  \forall \bA \in \A, \forall  \bA' \in \A \setminus \tilde \A_i ,
  \quad \hat  \Lambda_\bA (B_{\bA'}) = 0.
\end{equation}
  
In general, the set of functionals $\{\hat \Lambda_\bA \}_{ \bA \in \A}$ is not dual
to the set of functions $\B=\{B_\bA \}_{ \bA \in \A}$, but we can
easily fix it by defining, for all $\phi\in L^2(\Omega)$, the functionals:
\begin{equation}
  \label{eq:Lambda-from-hatLambda}
  \begin{aligned}
    \forall \bA &\in \A_s,  &\Lambda_{\bA}(\phi) &=  \hat
    \Lambda_{\bA}(\phi), \\
  \forall \bA &\in \A_e,  & \Lambda_{\bA}(\phi)&=  \hat
  \Lambda_{\bA}(\phi) - \sum_{\bA'' \in (\tilde\A_i \backslash \A_i)
    \cap \A_s} \hat \Lambda_{\bA}(B_{\bA''}) \Lambda_{\bA''}(\phi),\\
 \forall \bA &\in \A_v,  &\Lambda_{\bA}(\phi) &=  \hat
 \Lambda_{\bA}(\phi) - \sum_{\bA'' \in \tilde\A_i  \backslash \A_i} \hat \Lambda_{\bA}(B_{\bA''}) \Lambda_{\bA''}(\phi).
  \end{aligned}
\end{equation}
By going through all combinations for $\bA,\bA' \in \A_s,\A_e, \A_v$ and using \eqref{eq:As-Lambda-duality}--\eqref{eq:Av-Lambda-duality}
as well as the condition \eqref{eq:split-support}, it follows that the set $\{\Lambda_\bA \}_{ \bA \in \A}$ as defined in 
\eqref{eq:Lambda-from-hatLambda} fulfills  \eqref{eq:orthogonal-all}.
\end{proof}

\subsection{Definition of a basis and corresponding dual basis for a simplified configuration}
\label{sec:dual-basis-construction}

The dual compatibility condition of the previous section ensures the
existence of a dual basis, which implies the linear independence of
the basis functions in~${\cal B}$.
We now focus on a simplified configuration  of B-spline manifold spaces,
which allows for exactly one extraordinary function in every
unstructured vertex chart. We explicitly construct a  dual basis within the abstract
  framework of Section \ref{sec:dual-compatible-condition}. However,  
  taking advantage of the specific case considered here, we obtain dual functionals
  whose support is the same as the support of the corresponding functions.

In Assumption \ref{assu:extraordinary-function} we restrict our studies to more simple 
unstructured vertex charts, where each segment corresponds to an element of the mesh. Moreover, we give an explicit representation for the extraordinary vertex functions. Note that one may also define a collection of extraordinary vertex functions that are discontinuous at all or some of the element boundaries within the unstructured vertex chart. For simplicity, we consider only one $C^0$ continuous extraordinary vertex function for each unstructured vertex chart.

\begin{assumption}\label{assu:order-for-DC}\label{assu:extraordinary-function}
For each unstructured vertex chart $\omega_i$,  we
assume that the mesh $\tau_i$ is the union of the meshes $\sigma_{i,\ell}$ on the segments $s_{i,\ell}$, where each $\sigma_{i,\ell}$ contains a single element $q_{i,\ell}$ which covers the whole segment. 
Moreover, we assume that there exists exactly one index $\bA \in \A_i$ and the function $B_{\bA} \in \B_i$ is named \emph{unstructured vertex function}. The extraordinary vertex function is continuous, has the value 1 at the extraordinary vertex, and for each element $q_{i,\ell}\in \tau_i$ there exists a $d$-linear mapping $\hat\psi_{i,\ell} : q_{i,\ell} \rightarrow \left]0,1\right[^d$, such that 
\begin{equation}\label{eq:unstr-B-spline-dual-compatible}
 B _{\bA}\circ \pi_i (\bzeta) = \hat b_d \circ \hat\psi_{i,\ell}(\bzeta), \mbox{ for all }\bzeta\in q_{i,\ell},
\end{equation}
where $\hat b_d $ is defined as in
\begin{eqnarray}
  \hat b_d(\zeta_1,\ldots,\zeta_d)&=& (1-\zeta_1)^p\ldots (1-\zeta_d)^p,
  \nonumber\\
  &=& b[\Xi_0](\zeta_1)\ldots b[\Xi_0](\zeta_d),
  \quad \forall (\zeta_1,\ldots,\zeta_d) \in \left]0,1\right[^d,\label{eq:b-hat}
\end{eqnarray}
with $\Xi_0 = [0,\ldots,0,1]$.

For each unstructured edge chart $\omega_i = \twoD{\omega}_i \times \oneD{\omega}_i $, the mesh of the underlying bivariate chart $\twoD{\omega}_i$ is again given such that each segment $\twoD s_{i,\ell}$ of the bivariate chart is covered by a single element $\twoD{q}_{i,\ell}$, 
the one-dimensional structured chart $\oneD{\omega}_i$ is partitioned
into a mesh $\oneD\tau_i = \{ \oneD q_1, \ldots , \oneD q_{m_i} \} $, and the mesh $\tau_i$ on $\omega_i$ is given via
\begin{equation*}
	\tau_i = \{ \twoD q_{i,\ell} \times \oneD q_k :\; \twoD q_{i,\ell} \in \twoD\tau_i,\oneD q_k \in \oneD\tau_i \}.
\end{equation*}
Moreover, we assume that for all $\bA',\bA'' \in \A_i$ the \emph{unstructured edge functions} $B_{\bA'},B_{\bA''}$ are given as in \eqref{eq:edge-chart-B-spline-defn}, where $\beta_{\bA',i} = \beta_{\bA'',i}$ are equal to the same unstructured vertex function of dimension $d=2$. Hence, for each segment $\twoD{q}_{i,\ell}\times \left] a_{i,3},b_{i,3} \right[$ there exists a bilinear mapping $\hat\psi_{i,\ell} : \twoD{q}_{i,\ell} \rightarrow \left]0,1\right[^2 $, such that 
\begin{equation*}
	B _{\bA}\circ \pi_i (\bzeta) = \left(\hat b_2 \circ \hat\psi_{i,\ell } (\zeta_1,\zeta_2)\right) \; b[\Xi_{\bA,i,3}](\zeta_3), \qquad \mbox{for all } \bzeta \in \twoD{q}_{i,\ell}\times \left] a_{i,3},b_{i,3} \right[,
\end{equation*}
corresponding to the representation in \eqref{eq:edge-chart-B-spline-defn}.
\end{assumption}
Given an unstructured vertex chart $\omega_i$ this means that, denoting by $j(q)$ a chart index such that $\omega_{j(q)}$ is a structured chart and $q
  \subset \omega_{i,j(q)}$, there exist knot vectors
  $\Xi_{\bA,q,j(q),1}, \ldots, \Xi_{\bA,q,j(q),d}$   of the kind 
  \begin{equation}
    \label{eq:local-knot-vectors-for-extraordinary-function}
  [ \underset{p+1\text{
        times}}{\underbrace{\xi',\ldots,\xi' }, \xi''} ] \text{ or } [ \xi',\underset{p+1\text{
        times}}{\underbrace{\xi'',\ldots,\xi'' }} ] 
  \end{equation}
such that  
\begin{equation*}
 B _{\bA}\circ \pi_j (\bzeta) =  b[\Xi_{\bA,q,j(q),1}](\zeta_1) \cdot
 \ldots \cdot  b[\Xi_{\bA,q,j(q),d}](\zeta_d), 
\end{equation*}
for all $\bzeta$ such that $ \psi_{j(q),i} (\bzeta) \in q$. We can now define a dual basis explicitly, using this representation. A similar representation can be derived for unstructured edge charts.

In the following we define proto-dual functionals $\lambda_\bA$ and corresponding manifold functionals $\Lambda_\bA$ as in Definition~\ref{def:dual-functionals} which fulfill 
\begin{equation} \label{eq:orthogonal-tilde}
\Lambda_{\bA_i}(B_{\bA_i'}) = \delta_{\bA_i \bA_i'}, \; \forall \bA_i \in \A_i, \text{ and } \forall \bA_i' \in \tilde \A_i.
\end{equation}
Hence they form a dual basis for $B_{\bA}$, with $\bA\in\A$. For each
structured function (a function completely supported by a structured
chart), the dual functional is given as the tensor-product of 
univariate functionals \eqref{eq:1D-dual-fun}. For an unstructured vertex function we construct a dual functional by adding the contributions of each segment of the unstructured vertex chart, each being a structured subdomain. For unstructured edge functions, we construct a dual functional analogously to the basis function in 
\eqref{eq:edge-chart-B-spline-defn}, that is, the tensor-product of the dual functional of a bivariate unstructured vertex function with the dual functional of a univariate B-spline in the third direction. 
\begin{theorem}\label{thm:dual-basis-construction}
Given  a dual-compatible spline manifold
  space let
  \begin{equation*}
  \B^*= \left \{  \Lambda _{\bA} ,  \bA  \in    \A \right \}
\end{equation*}
be a set of functionals  $\Lambda _{\bA}:L^2(\Omega) \rightarrow
\mathbb{R}$ such that: 
\begin{itemize}
\item if $\omega_i$ is a structured chart, then for all $\bA\in \A_i$, we have 
\begin{equation}
  \label{eq:tp-Lambda-defn2}
 \Lambda_{\bA}(\phi)=  (\lambda [\Xi_{\bA,i,1}]\otimes \ldots \otimes
 \lambda[\Xi_{\bA,i,d}])(\phi |_{\Omega_i} \circ \pi_i ), \qquad \forall \phi \in L^2(\Omega),
\end{equation}
where the knot vectors $\Xi_{\bA,i,k}$ are given as in \eqref{eq:tp-B-spline-defn};
\item  if $\omega_i$  is an unstructured vertex chart and $B_\bA$, for $\bA\in \A_i$, is the corresponding unstructured vertex function, then let $Q_{i,\ell}=\pi_i(q_{i,\ell})$ and 
\begin{equation}\label{eq:unstructured-vertex-chart-dual-functionals}
  \Lambda_{\bA}(\phi)= \frac{1}{k_i} \sum^{k_i}_{\ell = 1}\Lambda_{\bA,i,\ell}(\phi |_{Q_{i,\ell}}), \quad \forall \phi \in L^2(\Omega),
\end{equation}
where 
\begin{equation*}
  \Lambda_{\bA,i,\ell}(\phi )= (\lambda [\Xi_0]\otimes \ldots \otimes
 \lambda[\Xi_0])(\phi \circ \pi_i \circ (\hat\psi_{i,\ell})^{-1} ),
\end{equation*}
for all  $\phi \in L^2(Q_{i,\ell})$, with $\Xi_0 =  [0,\ldots,0,1]$; and
\item  if $\omega_i$ is an unstructured edge chart and $B_\bA$, for $\bA\in \A_i$, is a corresponding unstructured edge function, then let ${Q}_{i,\ell} = \pi_i(\twoD{q}_{i,\ell} \times \left]a_{i,3},b_{i,3}\right[)$, and
\begin{equation}\label{eq:unstructured-edge-chart-dual-functionals}
  \Lambda_{\bA}(\phi)= \frac{1}{k_i} \sum^{k_i}_{\ell=1}\Lambda_{\bA,i,\ell} (\phi |_{{Q}_{i,\ell}}), \quad \forall \phi \in L^2(\Omega),
\end{equation}
where 
\begin{equation*}
  \Lambda_{\bA,i,\ell}(\phi)= (\lambda [\Xi_{0}] \otimes
 \lambda[\Xi_{0}] \otimes  \lambda[\Xi_{\bA,i,3}])(\phi \circ \pi_i \circ ((\hat\psi_{i,\ell})^{-1},\zeta_3) ),
\end{equation*}
for all $\phi \in L^2({Q}_{i,\ell})$.
\end{itemize}
The set $\B^*$ is dual to $\B$, that is 
\begin{displaymath}
  \Lambda _{\bA} (B_{\bA'}) = \delta_{\bA \bA'}, \quad \forall \bA, \bA' \in \A.
\end{displaymath}
\end{theorem}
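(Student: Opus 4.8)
The plan is to verify the duality $\Lambda_\bA(B_{\bA'}) = \delta_{\bA\bA'}$ by splitting into cases according to whether $\bA \in \A_s$, $\bA \in \A_e$, or $\bA \in \A_v$, and within each case checking the pairing against $B_{\bA'}$ for all three possible types of $\bA'$. The key observation is that, since $\B^*$ here is defined by explicit formulas rather than by the abstract correction in \eqref{eq:Lambda-from-hatLambda}, we must show directly that the explicit construction already produces a genuine dual basis (not merely the triangular relations \eqref{eq:As-Lambda-duality}--\eqref{eq:Av-Lambda-duality}). The statement we really want is the stronger local relation \eqref{eq:orthogonal-tilde}, namely $\Lambda_{\bA_i}(B_{\bA_i'}) = \delta_{\bA_i\bA_i'}$ for $\bA_i \in \A_i$ and $\bA_i' \in \tilde\A_i$; since $\Lambda_\bA$ is supported in $\Omega_i$ (each summand lives on a $Q_{i,\ell}$), any $\bA'$ with support disjoint from $\Omega_i$ automatically gives $\Lambda_\bA(B_{\bA'}) = 0$, so \eqref{eq:orthogonal-tilde} indeed implies the full duality.

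First I would treat $\bA \in \A_s$. Here $\Lambda_\bA$ is the pulled-back tensor-product functional $\lambda[\Xi_{\bA,i,1}]\otimes\cdots\otimes\lambda[\Xi_{\bA,i,d}]$ on the structured chart $\omega_i$. For $\bA' \in \tilde\A_i$ the function $B_{\bA'}$ restricted to $\omega_i$ is a tensor-product B-spline $b[\Xi_{\bA',i,1}]\cdots b[\Xi_{\bA',i,d}]$ by Assumption~\ref{assu:structured-chart-functions}. If $\bA = \bA'$ all knot vectors agree and \eqref{eq:1D-dual-fun} gives the product of ones. If $\bA \neq \bA'$, point~1 of Definition~\ref{def:dual-compatible} provides an index $l$ with $\Xi_{\bA,i,l}$ and $\Xi_{\bA',i,l}$ different but overlapping, so $\lambda[\Xi_{\bA,i,l}](b[\Xi_{\bA',i,l}]) = 0$ by \eqref{eq:1D-dual-fun}, killing the whole tensor product. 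This case is essentially the classical structured argument of \cite{beirao2014actanumerica}.

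Next, $\bA \in \A_v$: $\Lambda_\bA = \frac{1}{k_i}\sum_\ell \Lambda_{\bA,i,\ell}$ where each $\Lambda_{\bA,i,\ell}$ pulls back $\lambda[\Xi_0]^{\otimes d}$ through $\hat\psi_{i,\ell}$ on the segment $q_{i,\ell}$. For $B_{\bA'}$ with $\bA' \in \tilde\A_i$, restrict to each segment: by Remark~\ref{rem:extraordinary-functions-defined-on-structured-charts} and \eqref{eq:local-knot-vectors-for-extraordinary-function}, $B_{\bA'}\circ\pi_i$ on $q_{i,\ell}$, composed with $(\hat\psi_{i,\ell})^{-1}$, is a tensor product of univariate B-splines with knot vectors of type $[\xi',\ldots,\xi',\xi'']$ or $[\xi',\xi'',\ldots,\xi'']$, all overlapping with $\Xi_0 = [0,\ldots,0,1]$. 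If $\bA' = \bA$ (the unique unstructured vertex function), \eqref{eq:unstr-B-spline-dual-compatible} and \eqref{eq:b-hat} give $B_\bA\circ\pi_i\circ(\hat\psi_{i,\ell})^{-1} = b[\Xi_0]^{\otimes d}$ on every segment, so $\Lambda_{\bA,i,\ell}(B_\bA|_{Q_{i,\ell}}) = 1$ for each $\ell$ and the average is $1$. If $\bA' \neq \bA$, then on each segment the pulled-back function either has a direction whose knot vector differs from $\Xi_0$ while overlapping it — giving a zero factor by \eqref{eq:1D-dual-fun} — or it vanishes identically on $q_{i,\ell}$ (for structured $\bA'$ not touching that segment); either way $\Lambda_{\bA,i,\ell}$ contributes $0$, so the sum is $0$. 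The edge case $\bA \in \A_e$ is the tensor product of this vertex argument in $(\zeta_1,\zeta_2)$ with the univariate argument in $\zeta_3$, using point~2 and point~3 of Definition~\ref{def:dual-compatible} to separate distinct edge functions by either the $\zeta_3$ knot vector or the bivariate factor $\beta$.

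\textbf{Main obstacle.} The delicate point is the case $\bA \in \A_v$ (or $\A_e$) paired against a \emph{different} unstructured vertex/edge function $B_{\bA'}$ — but by Definitions~\ref{def:unstructured-chart-2d}(C), \ref{def:unstructured-edge-chart-3d}(D) and \ref{def:unstructured-vertex-chart-3d}(C) the unstructured charts are essentially non-overlapping, so such $B_{\bA'}$ has support meeting $\Omega_i$ only through a structured chart; the real content is verifying that, segment by segment, the local knot vectors produced for $B_{\bA'}$ genuinely overlap $\Xi_0$ and are distinct from it whenever $\bA'\neq\bA$, which is exactly what \eqref{eq:local-knot-vectors-for-extraordinary-function} together with the dual-compatibility hypothesis guarantee. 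Once the three cases are assembled, \eqref{eq:orthogonal-tilde} holds, and hence $\B^*$ is dual to $\B$ on all of $\A$; moreover each $\Lambda_\bA$ is supported exactly where $B_\bA$ is, as claimed in the introduction to this subsection.
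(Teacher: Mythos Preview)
Your argument follows essentially the same route as the paper: reduce to the local relation \eqref{eq:orthogonal-tilde}, dispatch structured $\bA$ via dual-compatibility and \eqref{eq:1D-dual-fun}, and treat unstructured $\bA$ segment by segment. The paper compresses the unstructured case into a single observation --- that for unstructured $\bA$ the functional $\Lambda_\bA$ evaluates to zero on any piecewise polynomial vanishing at the extraordinary feature --- which is cleaner than your knot-vector bookkeeping and avoids a small imprecision in your write-up: the restriction of a general $B_{\bA'}$ to a single segment $q_{i,\ell}$ is just a polynomial, not a single tensor-product B-spline with ``a knot vector'', so the phrase ``has a direction whose knot vector differs from $\Xi_0$'' is not literally correct. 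What you really need (and what your argument implicitly uses) is that $\lambda[\Xi_0]^{\otimes d}$ applied to a degree-$p$ polynomial on $[0,1]^d$ returns its value at the origin, hence vanishes whenever $B_{\bA'}$ is zero at the extraordinary vertex --- which is exactly the paper's formulation.
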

\begin{proof}
To prove this theorem, we need to show that \eqref{eq:orthogonal-tilde} is fulfilled. From the dual compatibility condition, it follows that \eqref{eq:orthogonal-tilde} is fulfilled for all $\bA \in \A_i$ and $\bA' \in \A$, where $\omega_i$ is a structured chart. For unstructured charts $\omega_i$, it is obvious that
\begin{equation*}
\Lambda_{\bA_i}(B_{\bA_i'}) = \delta_{\bA_i \bA_i'}, \; \forall \bA_i \in \A_i, \text{ and } \forall \bA_i' \in \A_i \cup (\A \setminus \tilde \A_i).
\end{equation*}
What remains to be shown is that $\Lambda_{\bA_i}(B_{\bA_i'}) = 0$ for all $\bA_i' \in \tilde\A_i \backslash \A_i$. This becomes clear, as for all unstructured functions $B_\bA$ the functional $\Lambda_\bA$ evaluates to zero for all piecewise polynomials that are zero at the extraordinary features. This concludes the proof.
\end{proof}
In the following section we show approximation properties for a simplified configuration. Note that the results may be extended to more general configurations.

\subsection{Approximation properties of spline manifolds on uniform meshes}
\label{sec:properties-dual-compatible}

Having a stable dual basis, one can define 
a projection operator and use it to prove approximation properties of the
B-spline manifold space.

Here we  consider $h$-refinement: a spline manifold
$\Omega$ is given, with an initial mesh
$\T_{h_0}$ over it.  From this we construct a family of meshes $\{ \T_{h}\}
$ by mesh refining each structured chart. The refinement has to be consistent with the 
manifold structure, fulfilling Definition
\ref{def:proto-mesh}, in particular
\eqref{eq:mesh-compatile-with-transitions}, and  Definitions
\ref{def:unstructured-chart-2d}--\ref{def:unstructured-vertex-chart-3d}.
Note that during $h$-refinement the structured charts and  transition functions between
them are unchanged, while the unstructured charts are modified in
order to fulfill the mentioned Definitions.
Accordingly, we assume that a family of nested spaces $\{ \S_{h}\}$ is
given. The subscript $h$ always denotes the dependence on the refinement level.
 
A physical manifold $\Sigma$ is given on the initial mesh through a parametrization $\f G
\in (\S_{h_0})^3$, as in \eqref{eq:G}. For simplicity, we consider a
bivariate $\Omega$, then $\Sigma$ is a closed surface in the space $\R^3$. During $h$-refinement  $\f G$
is kept unchanged. This gives a sequence of nested isogeometric spaces $\V_h$ via \eqref{eq:isogeometric_space}.

To study the approximation properties of $\V_h$ we follow the approach 
in, e.g.,  \cite{bazilevs2006isogeometric,beirao2013analysis,da2012anisotropic} for structured spline spaces,
which  applies to the present framework as well. To keep it simple, we focus on a simple configuration: we
consider a  manifold  obtained
merging tensor-product patches with continuity $C^{p-1}$ but around
the extraordinary vertices, where the continuity is only $C^{0}$. This
configuration is referred to as \emph{multi-patch B-splines with
  enhanced smoothness} as  in \cite{Buchegger2015}, where it has been
first introduced and studied in the context of isogeometric analysis. On
the coarsest mesh $\T_{h_0}$,  the length of the  $C^{0}$ lines is
$p+1$ element edges, which corresponds to the condition that a function
in $\{ \S_{h_0}\}$  is $C^{0}$  across the patch interfaces if and
only if it has  an extraordinary vertex in the
closure of its support. The structured charts for this configuration
can be taken as the union of each pair of patches that have a 
common interface. This is not the only possibility but it simplifies the
next steps. In this case  each transition
 function $\psi_{i,j}$ between structured charts is the composition of a
 translation and a rotation by a multiple of $\frac{\pi}{2}$.
 To further simplify,  we assume the mesh on each structured chart is
 uniform with  mesh-size $h$. 
  
We assume by construction that each space  $\S_{h}$ is
\emph{complete}. By this, we mean that for every
structured chart $\omega_i$ the set $\tilde \B_i$ spans  
all piecewise  polynomials of degree $p$  and continuity stated above.  

The space $L^2(\Omega)$ is defined as 
\begin{equation*}
	L^2(\Omega) = \left\{ \phi : \Omega\rightarrow\R 
	\; | \; \phi\circ\pi_i \in L^2(\omega_i) \; \mbox{ for all structured charts }\omega_i \right\},
\end{equation*}
having the corresponding norm 
\begin{equation}\label{eq:L2-elementwise}
	\| \phi \|^2_{L^2(\Omega)} = \sum_{Q\in\T_h} \| \phi\circ\pi_{i,Q}\|^2_{L^2(\pi_{i,Q}^{-1}(Q))},
\end{equation}
where $\omega_{i,Q}$ is a structured chart covering $Q$, i.e. $Q\subseteq \pi_{i,Q}(\omega_{i,Q})$. 
Due to the isometry of the transition function, the $L^2$-norm of a function defined on a chart fulfills
$	\| \varphi \|^2_{L^2(\omega_{i,j})} = \| \varphi\circ\psi_{j,i}
\|^2_{L^2(\omega_{j,i})}$ for all structured charts $\omega_i$, $\omega_j$. Hence, the $L^2$-norm on $\Omega$ is well-defined. 
Moreover, the definition of the $L^2$-norm is independent of the level of refinement. 
We define bent Sobolev spaces $\mathcal{H}^k(\Omega)$ in the same
fashion. Bent Sobolev
spaces are piecewise Sobolev spaces with some regularity at the
element interfaces, see
\cite{bazilevs2006isogeometric,beirao2014actanumerica}. For example,
the space $\mathcal{H}^{p+1}(\Omega)$ is defined as 
the closure of the space of piecewise $C^{\infty}$ functions having
the same continuity at the mesh lines of the space   $\mathcal{S}$,
with respect to the norm
\begin{equation}\label{eq:bent-sobolev-1}
  	\| \phi \|^2_{\mathcal{H}^{p+1}(\Omega)} = 	\| \phi
        \|^2_{L^2(\Omega)} + \sum_{k=1}^{p+1}  | \phi |^2_{\mathcal{H}^{k}(\Omega)}, 
\end{equation}
where
\begin{equation}\label{eq:bent-sobolev-2}
  | \phi |^2_{\mathcal{H}^{k}(\Omega)}  =    \sum_{Q\in\T_h}
   | \phi\circ\pi_{i,Q}|^2_{H^k(\pi_{i,Q}^{-1}(Q))}
\end{equation}
and  $ | \cdot |_{{H}^{k}(q)} $ is the usual $k$-th order Sobolev seminorm.
Obviously, all the spaces and norms can be defined accordingly on
subdomains of $\Omega$ and are independent of the level of refinement. 

 Using the dual basis defined in  Theorem
 \ref{thm:dual-basis-construction} we introduce a projection operator $\PiOmega \; : \; L^2(\Omega) \rightarrow \mathcal{S}_h$  onto the
 B-spline manifold space via 
 \begin{equation}\label{eq:projector}
   \phi \mapsto \PiOmega(\phi)  =  \sum_{\bA\in\A_h} \Lambda_{\bA}(\phi) B_{\bA}
 \end{equation}
The projector is $L^2$ stable, uniformly with respect to $h$, i.e. 

\begin{equation}
  \label{eq:L2-stab-projector}
  \|  \phi -  \PiOmega(\phi) \|_{L^2(\Omega)}  \leq C\| \phi
  \|^2_{L^2(\Omega)}, \qquad \forall \phi \in L^2(\Omega),
\end{equation}
 which follows directly from the $L^2$ stability of the dual basis as
 defined in Theorem \ref{thm:dual-basis-construction}, see for example \cite{beirao2014actanumerica}.  To prove its approximation properties we use
the following Lemma.
\begin{lemma}
  \label{lemma:bramble-hilbert}
Let ${\mathcal{S}}_{\refbox}$ be the  space of all  piecewise polynomials with
respect to a uniform Cartesian mesh of a box $\refbox \in \mathbb{R}^2$, 
such that 
\begin{itemize}
\item the mesh is formed by up to $2p+1$ elements per direction with
  meshsize $h$;
\item the polynomial degree is $p$ in each direction; and
\item the continuity is  $C^{p-1}$ globally with the exception of a
  mesh line $\f e$ where the continuity is only $C^{0}$, i.e.,  ${\mathcal{S}}_{\refbox} \subset
  C^{p-1}(\refbox \setminus\f  e) \cap  C^{0}(\refbox)$.
\end{itemize}
Let $\mathcal{H}^{p+1}(\refbox) $ be the bent Sobolev space
associated to ${\mathcal{S}}_{\refbox}$. Then 
for all $\phi\in \mathcal{H}^{p+1}({\refbox})  $ there exists a  $\sigma \in
{\mathcal{S}}_{\refbox}$  such that  
\begin{equation}
  \label{eq:bramble-hilbert}
  	\|\phi -\spline\|_{L^2(\refbox)}  \leq C  h^{p+1}  | \phi |_{\mathcal{H}^{p+1}(\refbox)}  
\end{equation}
with a constant $C$ only dependent on $p$.
\end{lemma}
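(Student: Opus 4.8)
The statement is a Bramble–Hilbert-type estimate for a specific finite-dimensional spline space on a box with one reduced-continuity mesh line. I would prove it by a scaling/compactness argument, which is the standard route to such estimates. First I would reduce to a reference configuration: since the mesh is uniform Cartesian with mesh-size $h$ and at most $2p+1$ elements per direction, I rescale $\refbox$ to a fixed reference box $\hat\refbox$ (say, with mesh-size $1$ and the same number of elements, with the distinguished mesh line $\f e$ in a canonical position — noting there are only finitely many combinatorial positions for $\f e$, so one handles each and takes the worst constant). Under this affine rescaling the space ${\mathcal{S}}_{\refbox}$ maps to a fixed space $\hat{\mathcal{S}}$, the bent Sobolev space maps to a fixed space $\hat{\mathcal{H}}^{p+1}$, and the norms scale by the usual powers of $h$: $\|\cdot\|_{L^2(\refbox)} = h\,\|\cdot\|_{L^2(\hat\refbox)}$ in 2D (up to the Jacobian constant) and $|\cdot|_{\mathcal{H}^{p+1}(\refbox)} = h^{p}\,h^{-(p+1)+1}\cdot(\text{something})$ — more precisely $|\phi|_{\mathcal{H}^{p+1}(\refbox)} = h^{1-(p+1)}|\hat\phi|_{\hat{\mathcal{H}}^{p+1}}$, so that the claimed inequality becomes the scale-invariant statement
\begin{equation*}
  \inf_{\hat\sigma\in\hat{\mathcal{S}}} \|\hat\phi - \hat\sigma\|_{L^2(\hat\refbox)} \leq C\,|\hat\phi|_{\hat{\mathcal{H}}^{p+1}(\hat\refbox)}, \qquad \forall\,\hat\phi\in\hat{\mathcal{H}}^{p+1}(\hat\refbox).
\end{equation*}

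**The core argument on the reference box.** On $\hat\refbox$ I would establish this by the Peetre–Tartar / Deny–Lions lemma. The key point is that $\hat{\mathcal{S}}$ contains the global polynomials of degree $p$ in each variable (these are trivially $C^{\infty}$, hence in the bent space and in $\hat{\mathcal{S}}$), and the seminorm $|\cdot|_{\hat{\mathcal{H}}^{p+1}}$ annihilates exactly... well, it annihilates at least the polynomials of total degree $\le p$, in fact here one wants that the quotient of $\hat{\mathcal{H}}^{p+1}(\hat\refbox)$ by the kernel of the seminorm embeds compactly, and that the kernel is contained in $\hat{\mathcal{S}}$. The kernel of the $\hat{\mathcal{H}}^{p+1}$-seminorm consists of functions that are, elementwise, polynomials of degree $\le p$, glued with the prescribed continuity; by the continuity constraints ($C^{p-1}$ away from $\f e$) such a function is actually a single polynomial of degree $\le p$ on each side of $\f e$, hence lies in $\hat{\mathcal{S}}$. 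Compactness of the embedding $\hat{\mathcal{H}}^{p+1}(\hat\refbox) \hookrightarrow L^2(\hat\refbox)$ follows because $\hat{\mathcal{H}}^{p+1}$ is a finite union (over elements) of Sobolev spaces $H^{p+1}$ on Lipschitz pieces with compatibility conditions, and Rellich applies piecewise. Then the abstract lemma gives the quotient estimate, i.e. the existence of $\hat\sigma\in\hat{\mathcal{S}}$ with $\|\hat\phi-\hat\sigma\|_{L^2(\hat\refbox)} \le C|\hat\phi|_{\hat{\mathcal{H}}^{p+1}}$, with $C$ depending only on $p$ and the (finitely many) reference configurations. Undoing the scaling recovers \eqref{eq:bramble-hilbert}.

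**The main obstacle.** The delicate point is not the scaling bookkeeping but verifying that the bent Sobolev seminorm $|\cdot|_{\hat{\mathcal{H}}^{p+1}}$ is a genuine seminorm whose null space is finite-dimensional and contained in $\hat{\mathcal{S}}$, and that the compact-embedding hypothesis of the Deny–Lions/Peetre–Tartar lemma holds for these piecewise Sobolev spaces with interface-continuity constraints. Concretely one must check that a piecewise $C^{\infty}$ function with vanishing $\hat{\mathcal{H}}^{p+1}$-seminorm and the prescribed continuity across mesh lines is forced to be piecewise polynomial of degree $\le p$ and in fact reduces to one polynomial per side of the $C^0$-line — this uses that $C^{p-1}$-gluing of degree-$p$ polynomials across a line forces them to be the same polynomial. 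I would also need to confirm the embedding $\hat{\mathcal{H}}^{p+1}(\hat\refbox)\hookrightarrow L^2(\hat\refbox)$ is compact; since the bent space sits between $\bigoplus_Q H^{p+1}(Q)$ and a subspace cut out by linear trace constraints, it is a closed subspace of a space that embeds compactly into $\bigoplus_Q L^2(Q) = L^2(\hat\refbox)$, so this is routine once stated carefully. Everything else — the affine change of variables, the scaling of norms, reducing to finitely many positions of $\f e$ — is mechanical.
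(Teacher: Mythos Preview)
Your approach is correct and is essentially the paper's own proof, spelled out: the paper invokes the classical Bramble--Hilbert argument (citing \cite{bazilevs2006isogeometric}) to obtain a constant $C(p,h,\mathbf{n},\mathbf{e})$, extracts the factor $h^{p+1}$ by scaling, and then takes the maximum over the finitely many mesh configurations $(\mathbf{n},\mathbf{e})$---precisely your compactness-plus-scaling-plus-finitely-many-reference-cases strategy.

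One inaccuracy worth fixing: your claim that ``$C^{p-1}$-gluing of degree-$p$ polynomials across a line forces them to be the same polynomial'' is false. Two polynomials of total degree $\le p$ can match to order $C^{p-1}$ across $\{x=a\}$ and still differ by $c\,(x-a)^p$; hence the kernel of the bent $\mathcal{H}^{p+1}$-seminorm is the full space of piecewise-$\mathbb{P}_p$ functions with the prescribed continuity, not just one polynomial per side of $\mathbf{e}$. This does not damage the argument: that larger kernel is still contained in $\hat{\mathcal{S}}$, since elementwise $\mathbb{P}_p\subset\mathbb{Q}_p$ and the continuity constraints defining the kernel and $\hat{\mathcal{S}}$ coincide, which is all the Deny--Lions step requires.
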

\begin{proof}
The size of $R$ depends on the number of elements $\f n = (n_1,n_2)$ in each direction and the element size $h$. 
Given $\phi \in \mathcal{H}^{p+1}(\refbox) $ there exists  indeed $\spline \in
{\mathcal{S}}_{\refbox}$ such that
\begin{displaymath}
   	\|\phi-\spline\|_{L^2(\refbox)}  \leq C(p,h, \f n, \f e ) | \phi |_{\mathcal{H}^{p+1}(\refbox)}  
\end{displaymath}
with $ C(p,h, \f n,{\bf e})$ independent of $\phi$. The proof is the same as for the
classical Bramble-Hilbert lemma, see, e.g.,
\cite{bazilevs2006isogeometric}. The dependence of the constant  with
respect to $h$, that is $C(p,h, \f n,\f e) = C(p, \f n, \f e )  h^{p+1}$, follows from a
scaling argument. Finally, there are a finite number of different
configurations for $\f n$ and $\f e$, therefore we can set $C(p) = \max_{\f n,\f e} C(p, \f n, \f e ) $.
\end{proof}
See Figure \ref{fig:rectangle-support-extension} for a possible configuration of $\refbox$. Here, 
the line of $C^0$ continuity is shown in blue.
\begin{figure}[!ht]
	\centering
	\includegraphics[width=0.3\textwidth]{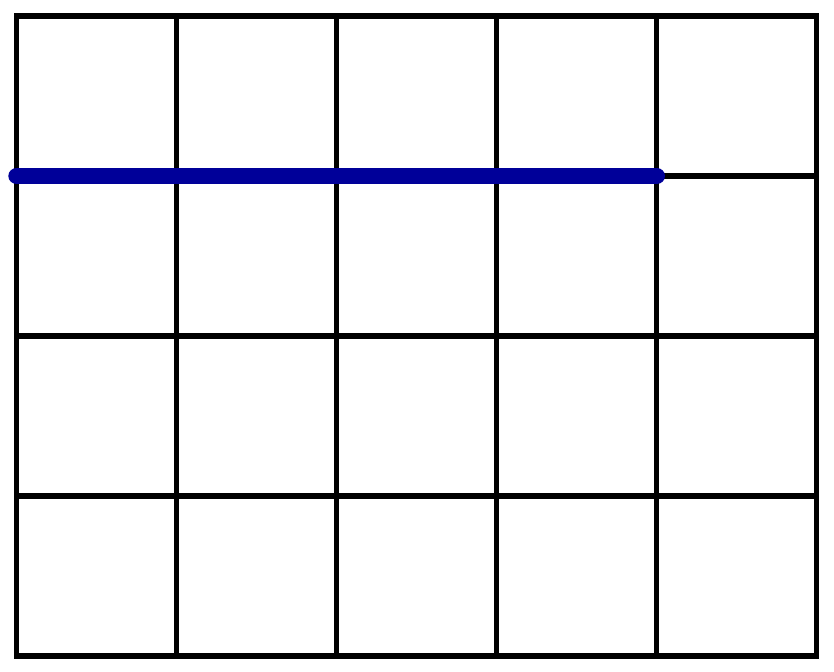}
    	\caption{Possible configuration of a box $\refbox$ and mesh line $\f e$ for degree $p=2$}\label{fig:rectangle-support-extension}
\end{figure}

Given  $Q\in\T_h$, we define  $\refext{Q} \subset \Omega$ in the
following way: for each structured chart $\omega_i$, $\refext{Q} \cap \Omega_i$ is the minimal box
containing all the supports of functions in $\B_h$ whose support
includes $Q$. 
We can now state the local approximation estimate.
\begin{theorem}\label{thm:local-approximation-parameter}
Under the assumptions of this Section, for $\phi \in
\mathcal{H}^{p+1}(\refext{Q})  $ it holds
\begin{equation}\label{eq:local-error-bound}
	\| \phi - \PiOmega(\phi) \|_{L^2({Q})} \leq C  h^{p+1}  | \phi |_{\mathcal{H}^{p+1}(\refext{Q})}  ,
\end{equation}
where the constant $C$ depends only on $p$.
\end{theorem}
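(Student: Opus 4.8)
The plan is the classical quasi-interpolation argument: reduce \eqref{eq:local-error-bound} to a local best-approximation estimate and then apply Lemma~\ref{lemma:bramble-hilbert}. Write $\mathcal{I}(Q)=\{\bA\in\A_h : Q\subseteq\mathrm{supp}(B_{\bA})\}$ and $\mathcal{N}(Q)=\bigcup_{\bA\in\mathcal{I}(Q)}\mathrm{supp}(B_{\bA})$; by the definition of $\refext{Q}$ one has $\mathcal{N}(Q)\subseteq\refext{Q}$ up to the measure-zero extraordinary features. Using that $\Lambda_{\bA}$ is supported in $\mathrm{supp}(B_{\bA})$ (as noted for the construction in Theorem~\ref{thm:dual-basis-construction}) together with the duality $\Lambda_{\bA}(B_{\bA'})=\delta_{\bA\bA'}$, I would first observe that $\PiOmega(\sigma)\rst{Q}=\sigma\rst{Q}$ for \emph{any} function $\sigma$ that coincides on $\mathcal{N}(Q)$ with an element of $\mathcal{S}_h$ --- equivalently, by the completeness assumption, with a piecewise polynomial of degree $p$ having the continuity prescribed for $\mathcal{S}_h$. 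Hence, for such a $\sigma$,
\begin{equation*}
\|\phi-\PiOmega(\phi)\|_{L^2(Q)}=\|(\phi-\sigma)-\PiOmega(\phi-\sigma)\|_{L^2(Q)}\le\|\phi-\sigma\|_{L^2(\refext{Q})}+\|\PiOmega(\phi-\sigma)\|_{L^2(Q)}.
\end{equation*}

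\noindent\textbf{Local stability.} Next I would establish the uniform local bound $\|\PiOmega(\psi)\|_{L^2(Q)}\le C\|\psi\|_{L^2(\refext{Q})}$: the cardinality of $\mathcal{I}(Q)$ is bounded in terms of $p$ and the finitely many valences of $\Omega$; $\|B_{\bA}\|_{L^2(Q)}\le|Q|^{1/2}\le Ch$ since $0\le B_{\bA}\le1$; and $|\Lambda_{\bA}(\psi)|\le Ch^{-1}\|\psi\|_{L^2(\mathrm{supp}\,B_{\bA})}$ by a scaling argument --- for structured $\bA$ this is the tensor product of the $L^2$-stable univariate functionals \eqref{eq:1D-dual-fun} on an $h$-mesh, and for unstructured $\bA$ it follows from the averaged formulas \eqref{eq:unstructured-vertex-chart-dual-functionals}--\eqref{eq:unstructured-edge-chart-dual-functionals}, each summand being such a tensor product precomposed with the $d$-linear maps $\hat\psi_{i,\ell}$, whose distortion stays uniformly bounded under refinement. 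Together with the previous step this reduces the theorem to exhibiting one admissible $\sigma$ with $\|\phi-\sigma\|_{L^2(\refext{Q})}\le Ch^{p+1}|\phi|_{\mathcal{H}^{p+1}(\refext{Q})}$.

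\noindent\textbf{Construction of the local approximant.} If the structured charts are taken large enough, $\refext{Q}$ lies in the image of a single structured chart unless $Q$ abuts an extraordinary vertex. In the first case $\refext{Q}$ is a box with at most $2p+1$ elements per direction carrying at most one $C^0$ mesh line, so Lemma~\ref{lemma:bramble-hilbert} directly produces the required $\sigma$. In the second case $\refext{Q}$ is the ``pinwheel'' consisting of the one-element ring around the extraordinary vertex $\bxi$ together with a bounded box $R_t$ inside each incident patch $P_t$; I would apply the classical Bramble--Hilbert lemma on each $R_t$ (on which $\phi\in H^{p+1}$, the $C^0$ lines lying on $\partial R_t$) to get tensor-product splines $\sigma_t$ with $\|\phi-\sigma_t\|_{L^2(R_t)}\le Ch^{p+1}|\phi|_{H^{p+1}(R_t)}$, and then glue them: the $C^0$ jumps $[\sigma_t-\sigma_{t+1}]$ across the incident interfaces have $L^2$-norm $\lesssim h^{p+1/2}|\phi|_{\mathcal{H}^{p+1}(\refext{Q})}$ by a trace inequality, and --- using crucially that the $C^0$ strips are $p+1$ element edges wide --- they can be cancelled by adding boundary-layer splines supported in those strips, of $L^2$-norm $\lesssim h^{p+1}|\phi|_{\mathcal{H}^{p+1}(\refext{Q})}$, while choosing the single coefficient of the unstructured vertex function so that the patch values at $\bxi$ match. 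The resulting $\sigma$ coincides on $\mathcal{N}(Q)$ with an element of $\mathcal{S}_h$, hence is admissible above. Since, up to the scaling $h$, there are only finitely many configurations of $\refext{Q}$ and finitely many valences, the constant depends only on $p$.

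\noindent\textbf{Main obstacle.} The delicate step is the pinwheel case: building a \emph{single} local spline that approximates $\phi$ to order $h^{p+1}$ simultaneously on the whole neighbourhood of an extraordinary vertex, reconciling the independently chosen per-patch approximants across the $C^0$ interfaces without losing a power of $h$. This is precisely where the assumption that the $C^0$ lines have length $p+1$ element edges (the \emph{enhanced smoothness} of \cite{Buchegger2015}) enters --- it leaves room for the correction --- and it is essentially the only place where the argument departs from the structured case of \cite{bazilevs2006isogeometric,beirao2013analysis}. A secondary technical point is the uniform local $L^2$-stability through the unstructured functionals, which needs the segment maps $\hat\psi_{i,\ell}$ to remain uniformly non-degenerate along the refinement.
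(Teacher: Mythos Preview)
Your proposal is correct and follows essentially the same route as the paper: both argue via the triangle inequality $\|\phi-\PiOmega(\phi)\|_{L^2(Q)}\le C\|\phi-\sigma\|_{L^2(\refext{Q})}$ using local $L^2$-stability of $\PiOmega$, then split into the two cases (structured $\refext{Q}$ handled by Lemma~\ref{lemma:bramble-hilbert}, and $\refext{Q}$ touching an extraordinary vertex handled by per-segment approximants glued along the $C^0$ interfaces). The paper is terser---it invokes the local stability \eqref{eq:L2-stab-projector} without spelling out the scaling of $\Lambda_{\bA}$ and, for the pinwheel case, cites \cite[Sections~2.2.2 and~4.4]{beirao2014actanumerica} for the matched construction rather than sketching the jump-correction argument you give---but the substance is the same.
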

\begin{proof}
The first step of the proof is to show that, given $ \phi\in 
\mathcal{H}^{p+1}(\refext{Q})  $, there is an $s \in \mathcal{S}_h$ such
that 
\begin{equation}\label{eq:local-error-bound-s}
	\| \phi - s \|_{L^2(\refext{Q})} \leq C  h^{p+1}  | \phi |_{\mathcal{H}^{p+1}(\refext{Q})}. 
\end{equation}
  We are in one of two cases, either
  \begin{enumerate}
  \item[(a)]  $\refext{Q} $ contains an extraordinary vertex,  or
  \item[(b)] $\refext{Q} \subset \pi_i(\omega_i ) $ where $  \omega_i$ is a structured chart.
  \end{enumerate}
\begin{figure}[!ht]
\centering
\subfigure[]{\includegraphics[height=0.15\textheight]{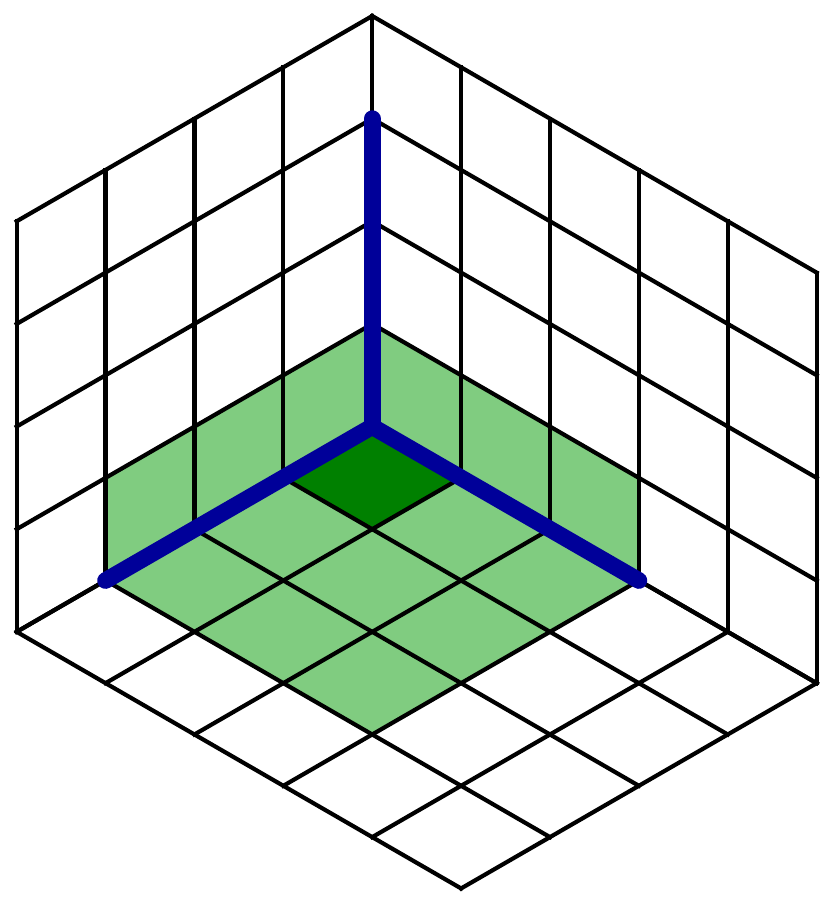}\label{fig:support-extension-a}}
\hspace{0.1\textwidth}
\subfigure[]{\includegraphics[height=0.15\textheight]{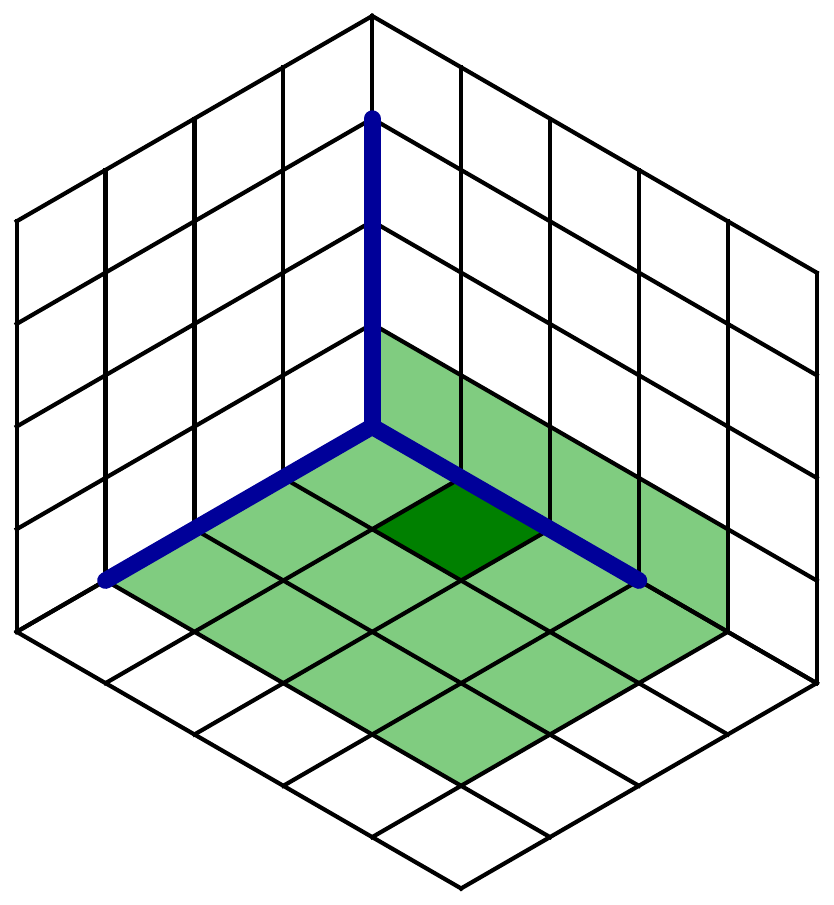}\label{fig:support-extension-b}}
    \caption{Different types of support extensions $\refext{Q}$ for $p=2$}\label{fig:support-extension}
\end{figure}
See Figure \ref{fig:support-extension} for a representation of the two possible cases. In the figure 
the $C^0$-continuity lines are depicted in blue. The dark green element represents $Q$ 
and the light green region represents its support extension $\refext{Q}$. 
Due to the assumption on the length of the $C^0$ lines, case 1) occurs when  $Q$ is adjacent to an extraordinary vertex, i.e., there
    exists an unstructured chart $\omega_i $  such  that $Q\subset
    \pi_i(\omega_i ) $. In this  case we can split   $\refext{Q} $  into
    $\refext{Q}_1, \ldots,  \refext{Q}_{k_i}$  such that each
    $\refext{Q}_\ell$ intersects only  the segment
    $\pi_i( s_{i,\ell})$ (see Definition
    \ref{def:unstructured-chart-2d}). Each 
    $\mathcal{S}_h\rst{\refext{Q}_\ell} $  is a standard tensor-product
    spline space therefore we can use \cite[Section 2.2.2 and Section
    4.4]{beirao2014actanumerica} in order to construct  splines $\spline
    _\ell\in \mathcal{S}_h\rst{\refext{Q}_\ell} $ which approximate $\phi
    \rst{\refext{Q}_\ell }$ and match continuously in the whole
    $\refext{Q} $. Since on the coarsest mesh the $C^0$ lines at the extraordinary 
    vertex cover $p+1$ element edges (see Figure \ref{fig:support-extension}), each  interface between two
    adjacent sets  of $\refext{Q}_1, \ldots,  \refext{Q}_{k_i}$
    is covered by the  $C^0$ continuity lines of   $s \in \mathcal{S}_h$  and
    \eqref{eq:local-error-bound-s} follows. 
In Case 2), we can use  Lemma \ref{lemma:bramble-hilbert}, then  \eqref{eq:local-error-bound-s} follows by
\eqref{eq:bramble-hilbert}.

Having \eqref{eq:local-error-bound-s} and recalling the $L^2$
stability of the projector  $\PiOmega$,  \eqref{eq:local-error-bound} is
derived in the usual way, i.e.
\begin{align*}
  	\| \phi - \PiOmega(\phi) \|_{{L}^{2}({Q})} &=  	\| \phi -\spline -    \PiOmega(\phi-\spline) \|_{{L}^{2}({Q})}
  \\ &\leq C 	\| \phi -\spline \|_{{L}^{2}({\refext{Q}})} 
  \\ & \leq  C h^{p+1}  | \phi|_{\mathcal{H}^{p+1}(\refext{Q})} .
\end{align*}
The stability constant as well as the approximation constant only depend on $p$, which concludes the proof.
\end{proof}
By means of inverse estimates and generalising
\eqref{eq:local-error-bound-s},  \eqref{eq:local-error-bound} can be extended to
higher order Sobolev norms. For  $0 \leq q \leq p+1$  it holds  
\begin{equation}\label{eq:local-error-bound-higher-order}
	| \phi - \PiOmega(\phi) |_{{H}^{q}({Q})} \leq C  h^{p-q+1}  | \phi|_{\mathcal{H}^{p+1}(\refext{Q})}  ,
\end{equation}
where the constant $C$ depends only on $p$ and $q$. The details are
not reported for the  sake of brevity.

We can extend the result to a mapped domain $\Sigma$, in this case a
closed surface in $\R^3$, parametrized by $\f G
\in (\S_{h_0})^3$.  We assume that  $\f G$ is regular, that is,  there exist constants $\underline c, \overline c$, with
$$
\overline c \geq \det (\nabla \f G ^T(\f x)  \nabla \f G (\f x)) \geq \underline c > 0
$$
for all $\f x \in Q$ and for all $Q \in \T_{h_0}$. Note that, in
general,  we
cannot define Sobolev spaces of any order on $\Sigma$, due to the lack of
smoothness of the manifold itself. However  the $L^2$
space on $\Sigma$ can be defined as 
\begin{displaymath}
  L^2(\Sigma) = \{ f: f\circ \f G  \in L^2(\Omega)\}
\end{displaymath}
and the corresponding norm is given via 
\begin{displaymath}
  \| f \|_{L^2(\Sigma)}  = \| f\circ \f G \, (\det (\nabla \f
  G ^T  \nabla \f G ))^{1/4} \|_{L^2(\Omega)}.
\end{displaymath}
The bent Sobolev spaces can be defined similarly,  as in
\eqref{eq:bent-sobolev-1}--\eqref{eq:bent-sobolev-2}. See
\cite{Hebey1996,Hebey2000} for more details about Sobolev spaces on manifolds.
Then, for all  $f \in L^2(\Sigma)$, we can define the isogeometric projector
\begin{displaymath}
  \PiSigma(f) = \PiOmega(f\circ \f G ) \circ \f G ^{-1}.
\end{displaymath}
Approximation properties of $ \PiSigma$
easily follow from the ones of $\PiOmega$ stated in Theorem
\ref{thm:local-approximation-parameter}, following the same approach
of \cite{bazilevs2006isogeometric,dede2015isogeometric}.
\begin{theorem}\label{thm:approximation-sigma}
Under the assumptions of this Section,  for all $f \in \mathcal{H}^{p+1}(\Sigma)$
\begin{equation}\label{eq:error-bound-on-Sigma}
	\| f - \PiSigma(f) \|_{L^2(\Sigma)} \leq C h^{p+1} \| f \|_{\mathcal{H}^{p+1}(\Sigma)}.
\end{equation}
where the constant $C$
depends only on  $\f G$ and  $p$.
\end{theorem}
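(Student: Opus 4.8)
The plan is to transfer the local estimate of Theorem~\ref{thm:local-approximation-parameter} from the parameter manifold $\Omega$ to the physical manifold $\Sigma$ by a change of variables through $\f G$, using the regularity assumption to control the Jacobian factor, and a chain-rule argument to compare the bent Sobolev norms on the two manifolds. First I would globalise the estimate on $\Omega$: summing the squares of \eqref{eq:local-error-bound} over all $Q\in\T_h$ and using that each element lies in only a bounded number of support extensions $\refext{Q}$ (a number depending on $p$ only, since each $\refext{Q}\cap\Omega_i$ spans a bounded number of elements), one obtains
\begin{equation*}
\| \phi - \PiOmega(\phi) \|_{L^2(\Omega)} \leq C h^{p+1} | \phi |_{\mathcal{H}^{p+1}(\Omega)}, \qquad \forall \phi\in\mathcal{H}^{p+1}(\Omega),
\end{equation*}
with $C=C(p)$.

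Next, by the definition of $\PiSigma$ and of the $L^2(\Sigma)$-norm we have $(f - \PiSigma(f))\circ\f G = f\circ\f G - \PiOmega(f\circ\f G)$, hence
\begin{equation*}
\| f - \PiSigma(f) \|_{L^2(\Sigma)} = \big\| \big(f\circ\f G - \PiOmega(f\circ\f G)\big)\, (\det(\nabla\f G^T\nabla\f G))^{1/4} \big\|_{L^2(\Omega)}.
\end{equation*}
Since $\underline c \leq \det(\nabla\f G^T\nabla\f G)\leq\overline c$ elementwise on the coarsest mesh, the weight $(\det(\nabla\f G^T\nabla\f G))^{1/4}$ is bounded above and below by constants depending only on $\f G$, so combining with the globalised estimate applied to $\phi=f\circ\f G$ gives
\begin{equation*}
\| f - \PiSigma(f) \|_{L^2(\Sigma)} \leq \overline c^{1/4}\,\| f\circ\f G - \PiOmega(f\circ\f G) \|_{L^2(\Omega)} \leq C(p,\f G)\, h^{p+1}\, | f\circ\f G |_{\mathcal{H}^{p+1}(\Omega)}.
\end{equation*}

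It remains to bound $| f\circ\f G |_{\mathcal{H}^{p+1}(\Omega)}$ by $\| f \|_{\mathcal{H}^{p+1}(\Sigma)}$, and this is the step I expect to be the most delicate, though routine in spirit. On each element $Q\in\T_{h_0}$ the parametrization $\f G$ is a polynomial of degree $p$, hence $C^\infty$ with all derivatives controlled; by regularity of $\f G$ its local inverse likewise has bounded derivatives. The multivariate chain rule (Fa\`a di Bruno formula) then expresses the derivatives of $f\circ\f G$ of order up to $p+1$ as sums of products of derivatives of $f$ of order $\leq p+1$ with derivatives of $\f G$. Because $\T_{h_0}$ has finitely many elements and both $\mathcal{H}^{p+1}(\Omega)$ and $\mathcal{H}^{p+1}(\Sigma)$ are defined elementwise on the coarse mesh (so no extra regularity of $\f G$ across the extraordinary features is needed), one gets $| f\circ\f G |_{\mathcal{H}^{p+1}(\Omega)} \leq C(p,\f G)\, \| f \|_{\mathcal{H}^{p+1}(\Sigma)}$, where the full norm on the right is genuinely needed since derivatives of $\f G$ multiply lower-order derivatives of $f$. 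Chaining the three displays yields \eqref{eq:error-bound-on-Sigma} with a constant depending only on $\f G$ and $p$. The main obstacle is thus bookkeeping: keeping track of these norm equivalences under the only piecewise smooth, manifold-valued change of variables $\f G$, and checking that every constant stays uniform in the refinement level $h$ — which it does, because $\f G$ and the coarse mesh $\T_{h_0}$ are fixed while only the structured charts are refined.
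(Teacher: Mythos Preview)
Your proposal is correct and follows essentially the same approach the paper has in mind: the paper itself does not give a proof of Theorem~\ref{thm:approximation-sigma} but only states that it ``easily follow[s] from the ones of $\PiOmega$ stated in Theorem~\ref{thm:local-approximation-parameter}, following the same approach of \cite{bazilevs2006isogeometric,dede2015isogeometric}'', i.e., exactly the pullback-by-$\f G$, change-of-variables, and chain-rule argument you outline. Your write-up in fact supplies more detail than the paper does, including the finite-overlap globalisation step and the observation that all constants are $h$-independent because $\f G$ and $\T_{h_0}$ are fixed.
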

Note that all the results presented here extend naturally to volumetric domains. In that case, the lines of $C^0$ continuity extend to faces of $C^0$ continuity in a vicinity of the extraordinary vertices and edges.

\section{Conclusion and possible extensions}
\label{sec:conclusion}

We have introduced a new general mathematical framework, based on manifolds, for the definition and the analysis of unstructured spline spaces. As it is done in \cite{grimm1995modeling} the main idea is to decompose the domain into charts, which are meshed with quadrilaterals or hexahedra. Then spline basis functions and dual functionals can be defined locally on each chart. Unstructured charts are necessary to cover extraordinary vertices and edges of the domain.

We have used this framework to generalize the dual-compatibility condition of \cite{beirao2012analysis,beirao2013analysis} to unstructured spline spaces, and in particular to analyze the approximation properties of splines with high continuity everywhere except in the vicinity of extraordinary vertices and edges, where the continuity is only $C^0$. Although the analysis was restricted to the low continuity case, the framework allows for the definition of spline functions with higher smoothness, and their analysis will be the aim of future work.

In our definitions the physical domain is necessarily a manifold. However, since we are defining the charts in the parametric domain, and not in the physical domain, it is possible to extend our framework to non-manifold domains using special bifurcation charts (such as T-shaped or X-shaped charts for curves, etc.) This could be of interest for certain beam or shell formulations, or for the proper representation of the medial axis or medial surface of an object, for instance.

Finally, for the sake of simplicity we have restricted ourselves
  to B-splines and T-splines on quadrilateral/hexahedral meshes. The
  framework can be easily generalized to other spline spaces, such as
  NURBS or  trigonometric splines.

\section*{Acknowledgements}
The authors were partially supported by the European Research Council
through the FP7 Ideas Starting Grant \emph{HIGEOM}, and by the Italian
MIUR through the PRIN  ``Metodologie innovative nella modellistica differenziale numerica''. This support is gratefully acknowledged.

\begin{appendix}
\section{Spline manifolds with boundary}\label{appendix:boundary}

We can extend the definition of a spline manifold to a manifold with boundary. To do so, we first need to extend Definition~\ref{defi:proto-manifold}, defining a suitable proto-manifold that takes into account the boundary.
\begin{definition}[Manifolds with boundary]
A \emph{proto-manifold with boundary} is a generalization of a
proto-manifold, where the charts $\{\omega_i\}_{i=1,\ldots,N}$  are
given as $\omega_i = \inter{\omega_i} \cup \gamma_i$, such
that $\inter{\omega_i}$ are open polytopes forming a
standard proto-manifold (with transition domains
$\inter{\omega_{i,j}}$ and transition functions
${\psi}_{i,j}$) and each $\gamma_i
\subset \partial\omega_i$ is a part of the boundary of the
chart $\inter{\omega_i}$. Moreover, the transition domains fulfill $\omega_{i,j} =
\inter{\omega_{i,j}}\cup\gamma_{i,j}$, with $\gamma_{i,j}
= \gamma_i \cap \partial\omega_{i,j}$, and the transition functions
are the continuous extensions of
${\psi}_{i,j}$ and map $\gamma_{i,j}$ onto
$\gamma_{j,i}$ and $\inter{\omega_{i,j}}$ onto
$\inter{\omega_{j,i}}$. 

Similar to the standard parameter manifold in Definition~\ref{def:parameter-manifold}, we can define the \emph{parameter manifold with boundary} $\Omega$ via the equivalence relation induced by the transition functions. Since the transition functions always map the interior onto the interior and the boundary onto the boundary, the parameter manifold $\Omega$ can be separated into $\inter{\Omega}$ and the boundary denoted by $\Gamma$.
\end{definition}

This definition of the boundary of the (open) parameter manifold is
equivalent to the classical definition of a manifold with boundary, as
discussed in \cite{grimm1995modeling}. In this case every boundary point of the manifold has a neighborhood that is homeomorphic to the \emph{half $d$-ball}. Note that the boundary $\Gamma$ itself can be interpreted as a topological manifold of dimension $d-1$.

For the definition of the spline spaces, we assume the following.
\begin{assumption}
The local boundary $\gamma_i$ is conforming with respect to the elements, i.e. there exists a subset of faces of elements $q \in \tau_i$ that forms a mesh for the boundary $\gamma_i$.
\end{assumption}
To be able to define manifolds with boundary containing non-convex
features, we need additional types of charts, so called boundary
charts. To avoid the tedious formal definition of boundary charts, we
present figures that should explain the ideas behind. In Figure
\ref{fig:boundvert2D} we present different types of two-dimensional
boundary vertices, regular boundary vertices (a) - (c), as well as a
non-regular boudary vertex (d). The vertices in
Figures~\ref{fig:boundvert2D-a} and \ref{fig:boundvert2D-b} can be
covered by the boundaries of structured charts (see Figure
\ref{fig:boundary-charts-2d-a} -- \ref{fig:boundary-charts-2d-c}). For
the vertex in Figure~\ref{fig:boundvert2D-c} we need a special
boundary chart (see Figure \ref{fig:boundary-charts-2d-d}), associated
with the function which is non-zero at the corner. Note that the boundary 
vertex in Figure~\ref{fig:boundvert2D-d} is discarded since it does not 
allow for functions that are non-zero at the corner.
\begin{figure}[!ht]
    \centering
\subfigure[]{\includegraphics[width=0.15\textwidth]{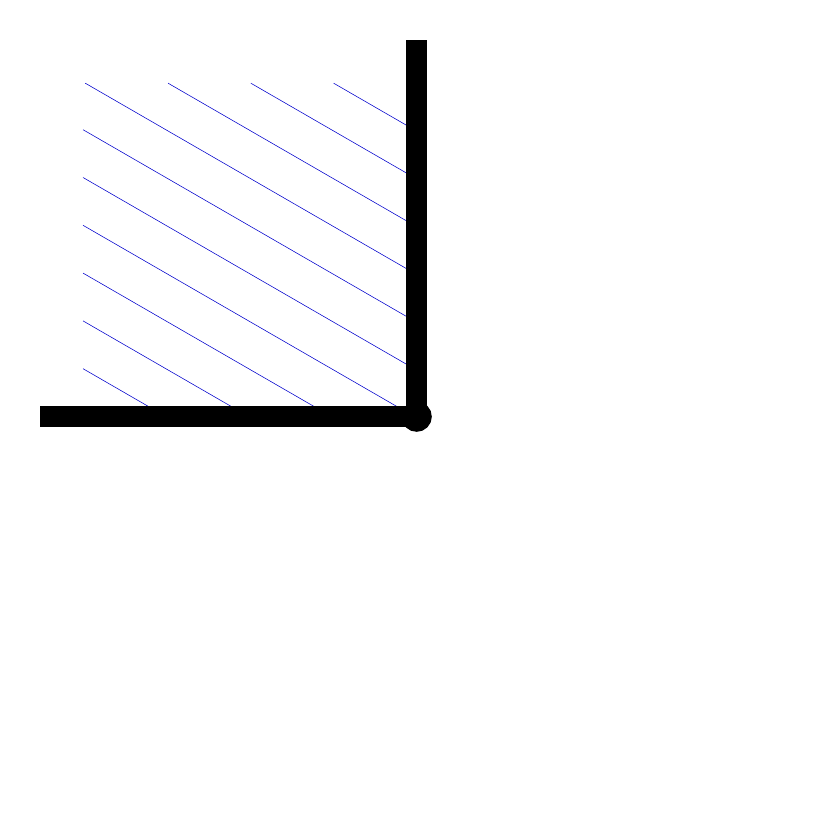}\label{fig:boundvert2D-a}}\qquad
\subfigure[]{\includegraphics[width=0.15\textwidth]{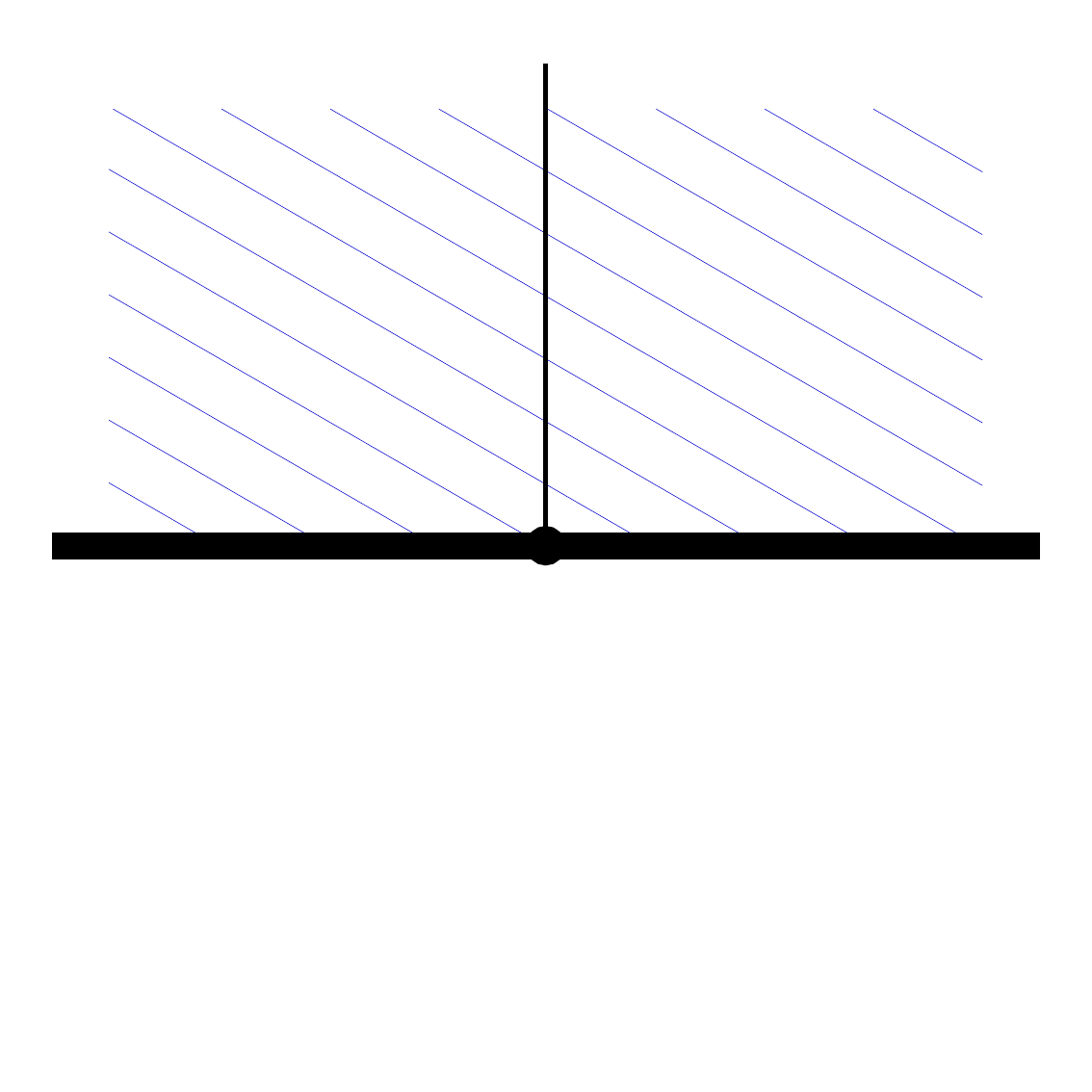}\label{fig:boundvert2D-b}}\qquad
\subfigure[]{\includegraphics[width=0.15\textwidth]{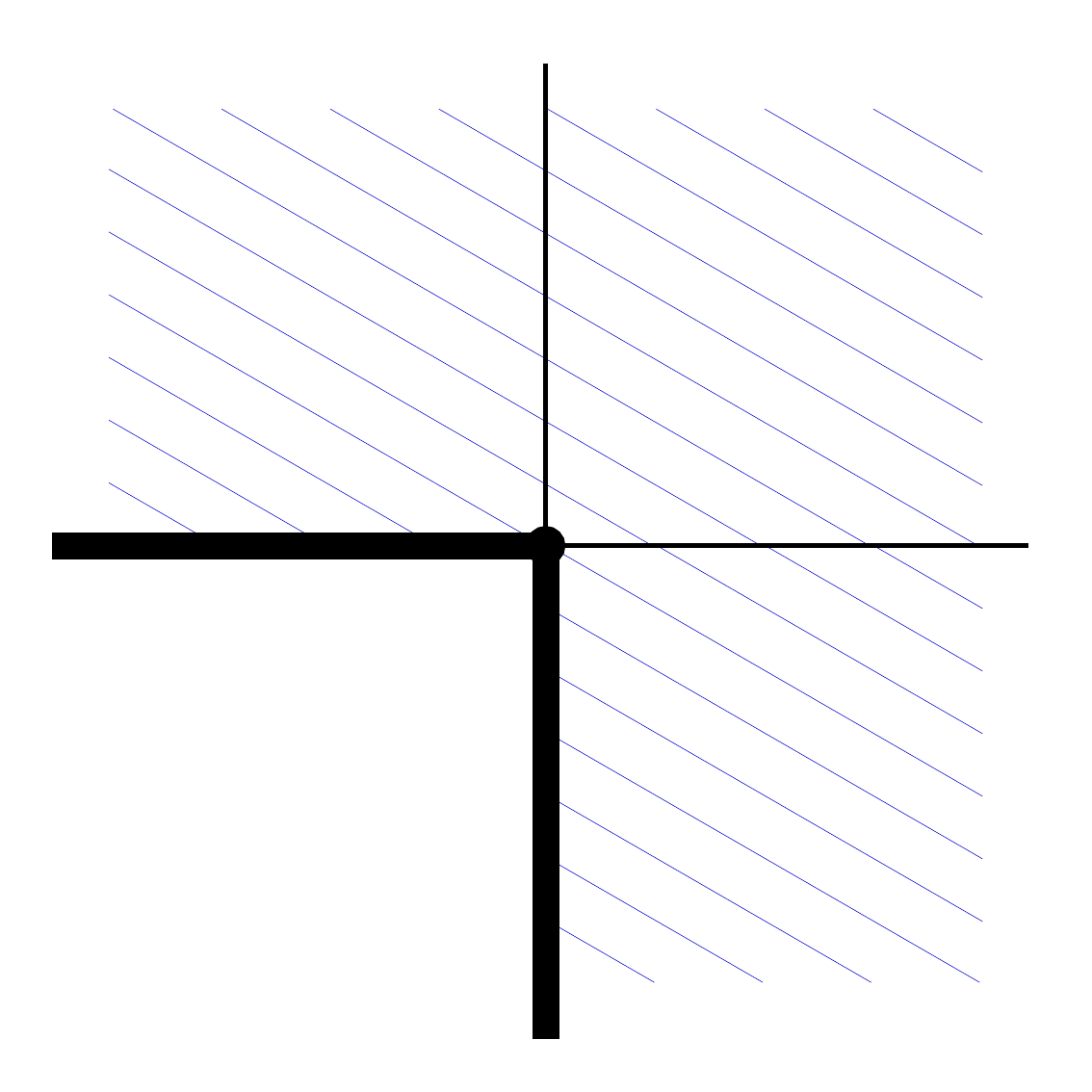}\label{fig:boundvert2D-c}}\qquad
\subfigure[]{\includegraphics[width=0.15\textwidth]{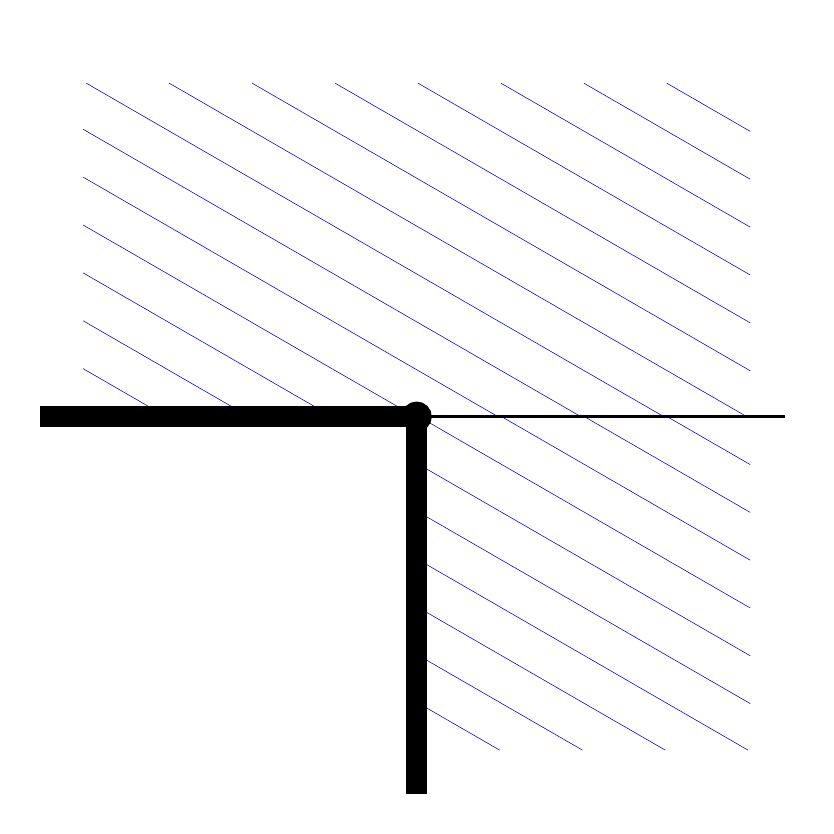}\label{fig:boundvert2D-d}}
    \caption{Different types of valid (a)--(c) and non-valid (d) boundary vertices in 2D}\label{fig:boundvert2D}
\end{figure}

\begin{figure}[!ht]
    \centering
\subfigure[]{\includegraphics[width=0.15\textwidth]{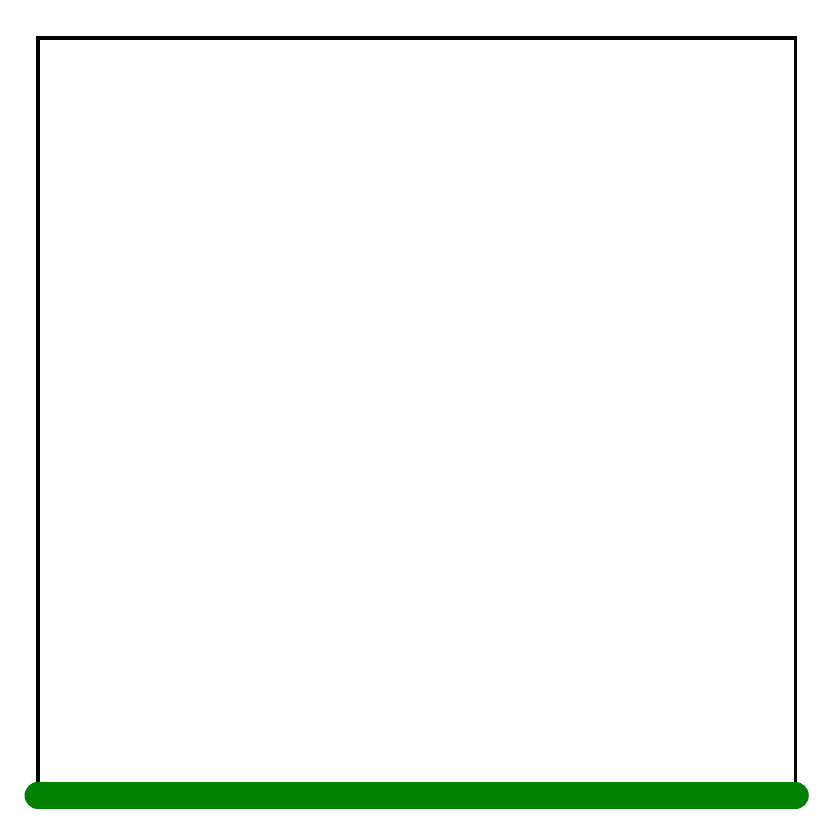}\label{fig:boundary-charts-2d-a}}\qquad
\subfigure[]{\includegraphics[width=0.15\textwidth]{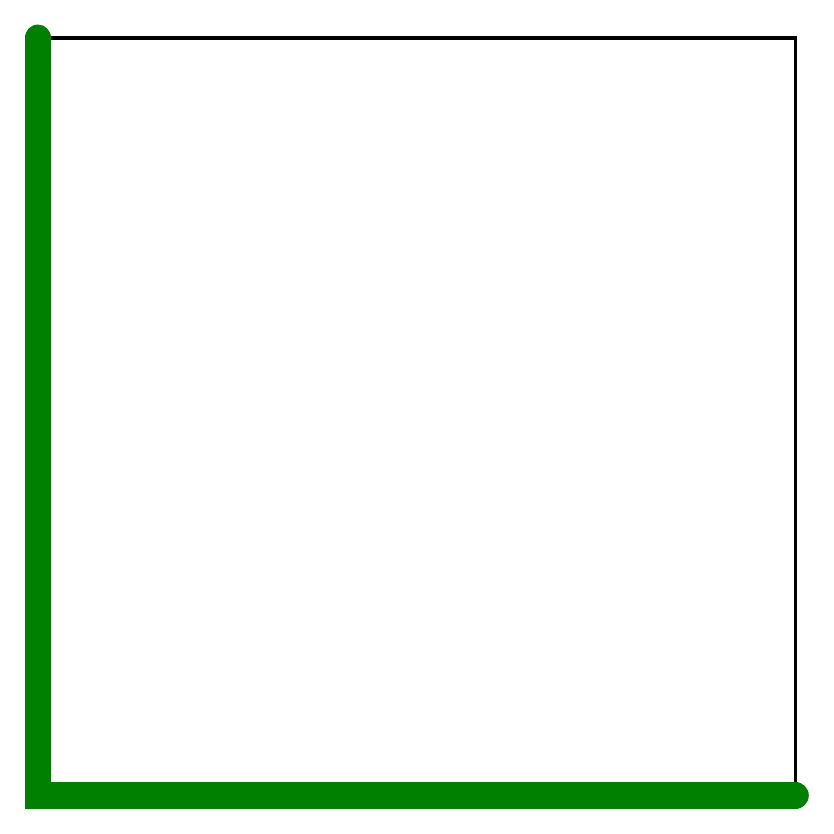}\label{fig:boundary-charts-2d-b}}\qquad
\subfigure[]{\includegraphics[width=0.15\textwidth]{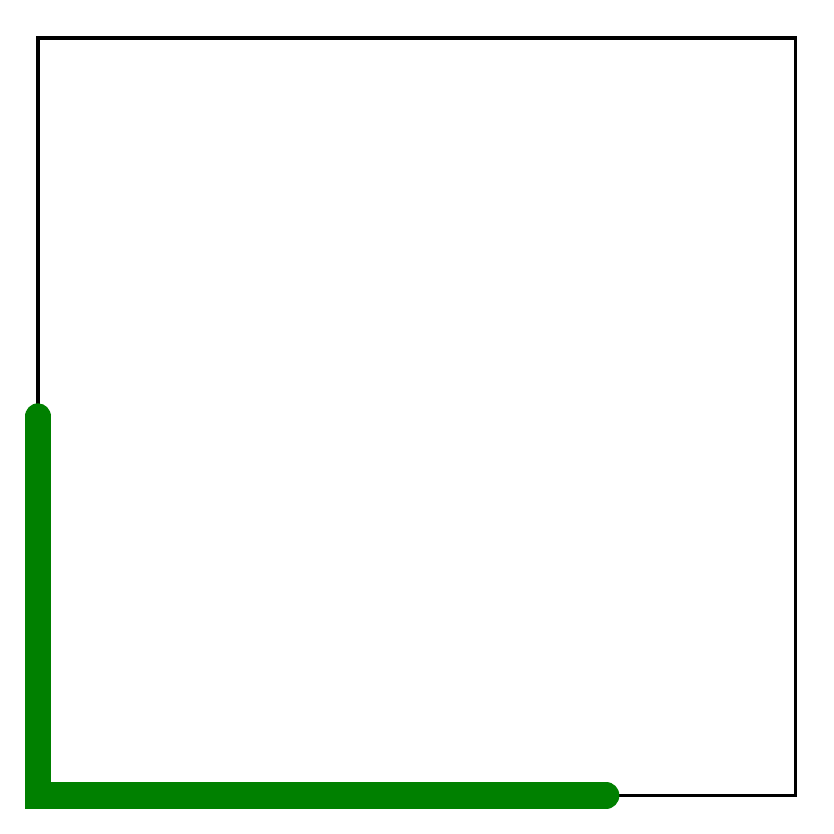}\label{fig:boundary-charts-2d-c}}\qquad
\subfigure[]{\includegraphics[width=0.15\textwidth]{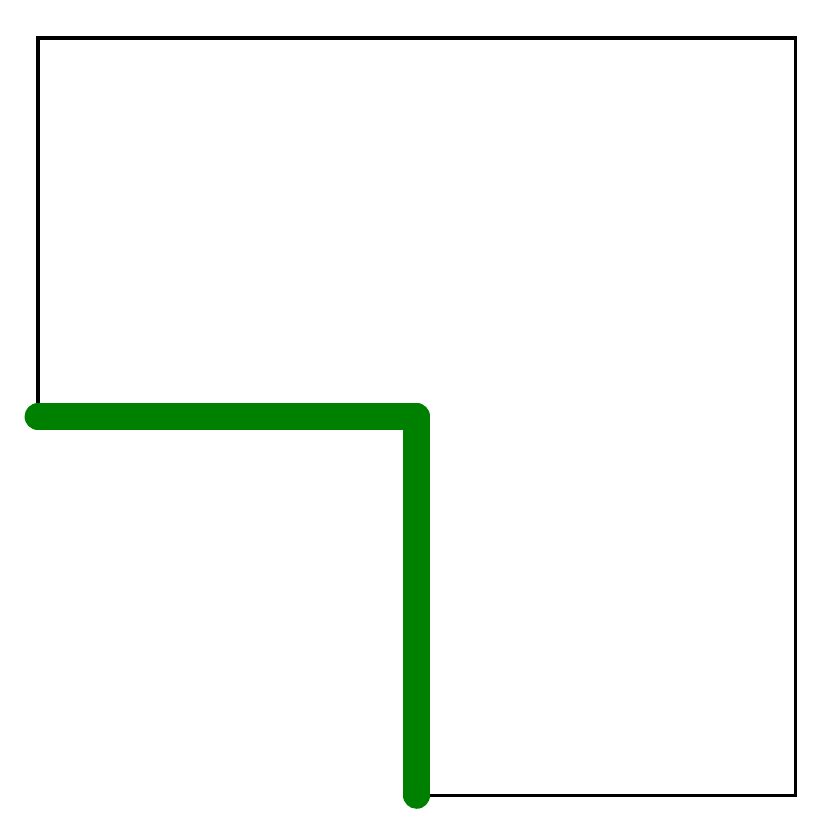}\label{fig:boundary-charts-2d-d}}
    \caption{Structured charts with boundary (a)--(c) and boundary chart (d) in 2D}\label{fig:boundary-charts-2d}
\end{figure}

In the three-dimensional case, there are more different configurations to consider, which are listed in Table \ref{tab:boundary-vertices}. 
Here, we need to consider two different types of boundary charts.
\begin{table}[ht]
\centering
\begin{tabular}{ll}
structured edge & regular boundary edge \\
structured vertex & regular boundary vertex \\
 & hanging boundary vertex \\
unstructured vertex & partially unstructured boundary vertex
\end{tabular}
    \caption{Classification of valid boundary edges and vertices in 3D}\label{tab:boundary-vertices}
\end{table}

Figures \ref{fig:charts-w-boundary-3d} and \ref{fig:boundary-charts-3d} depict several possible three-dimensional charts with boundary. In Figures \ref{fig:charts-w-boundary-3d-a}--\ref{fig:charts-w-boundary-3d-c} we show several examples of structured charts with boundary. Figure \ref{fig:charts-w-boundary-3d-d} shows an unstructured edge chart with boundary. 
\begin{figure}[!ht]
    \centering
\subfigure[]{\includegraphics[width=0.15\textwidth]{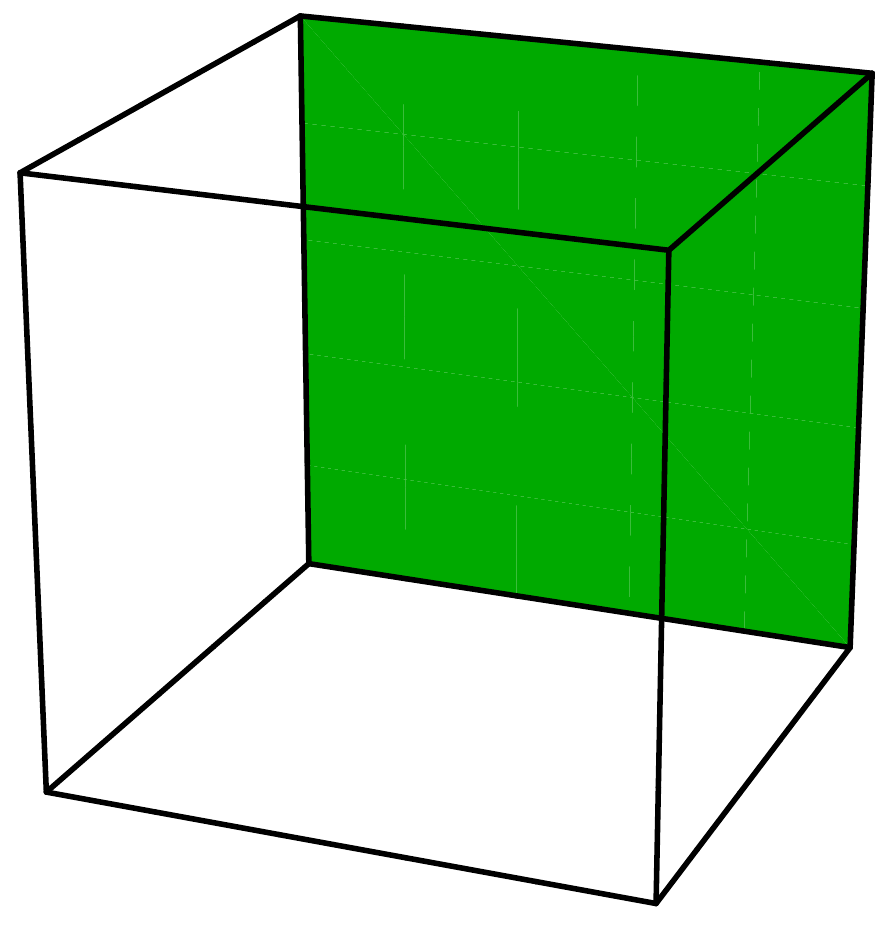}\label{fig:charts-w-boundary-3d-a}}\qquad
\subfigure[]{\includegraphics[width=0.15\textwidth]{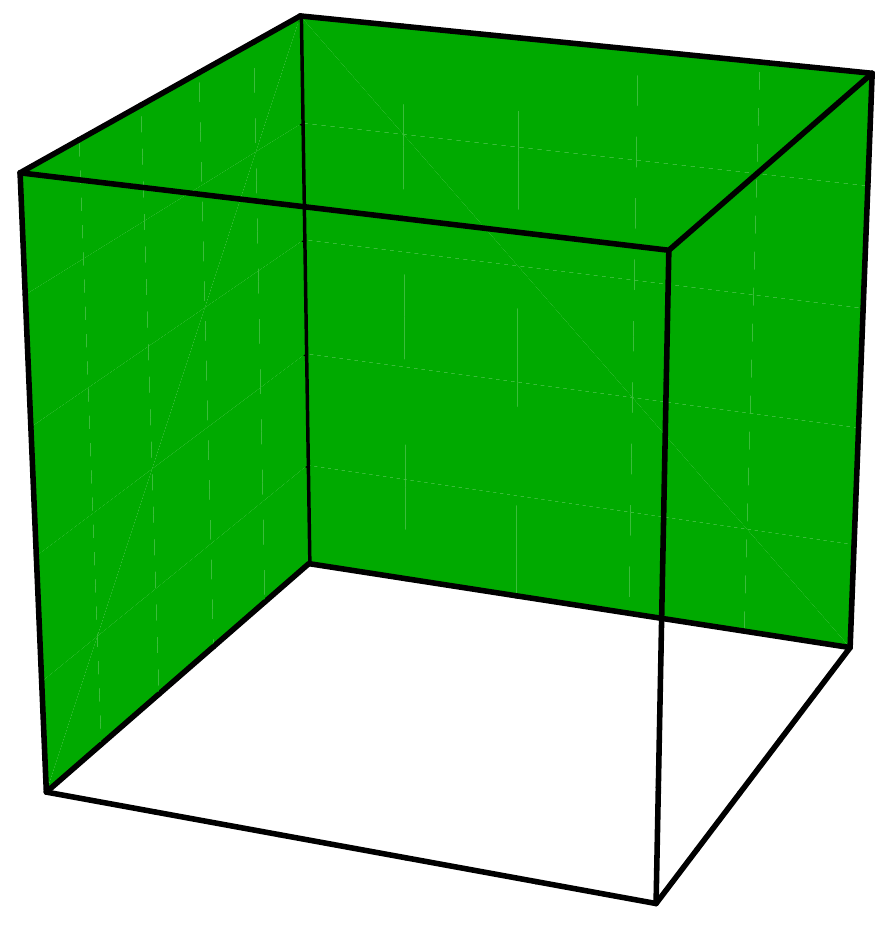}\label{fig:charts-w-boundary-3d-b}}\qquad
\subfigure[]{\includegraphics[width=0.15\textwidth]{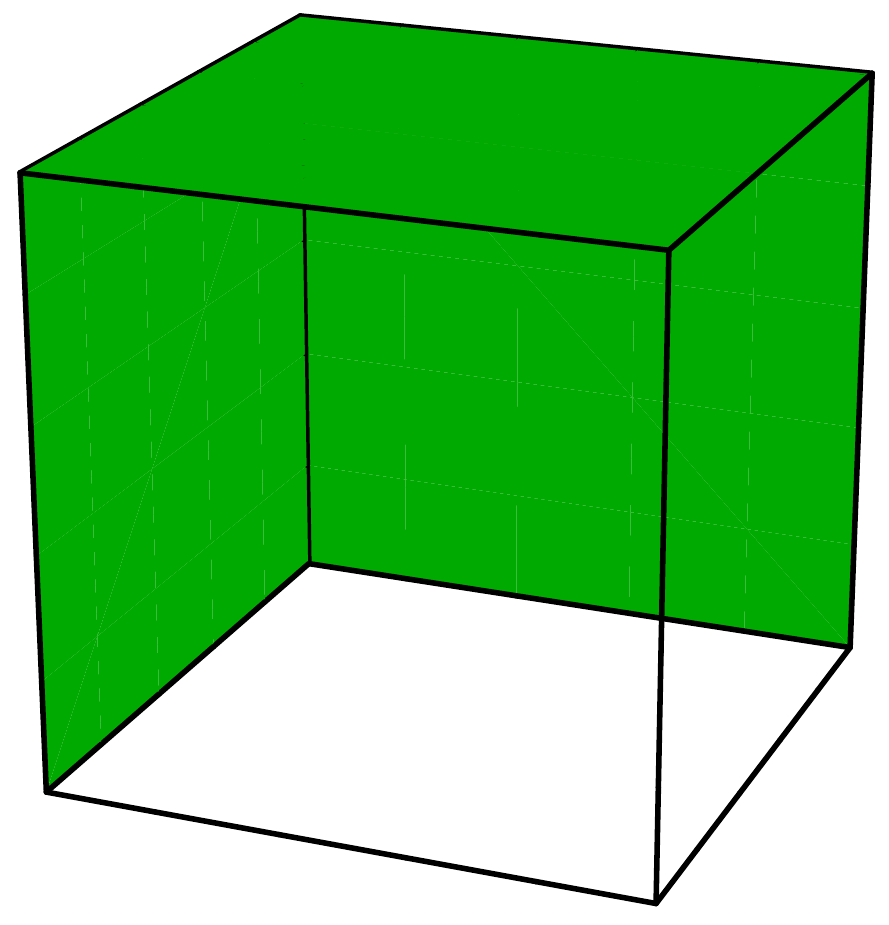}\label{fig:charts-w-boundary-3d-c}}\qquad
\subfigure[]{\includegraphics[width=0.15\textwidth]{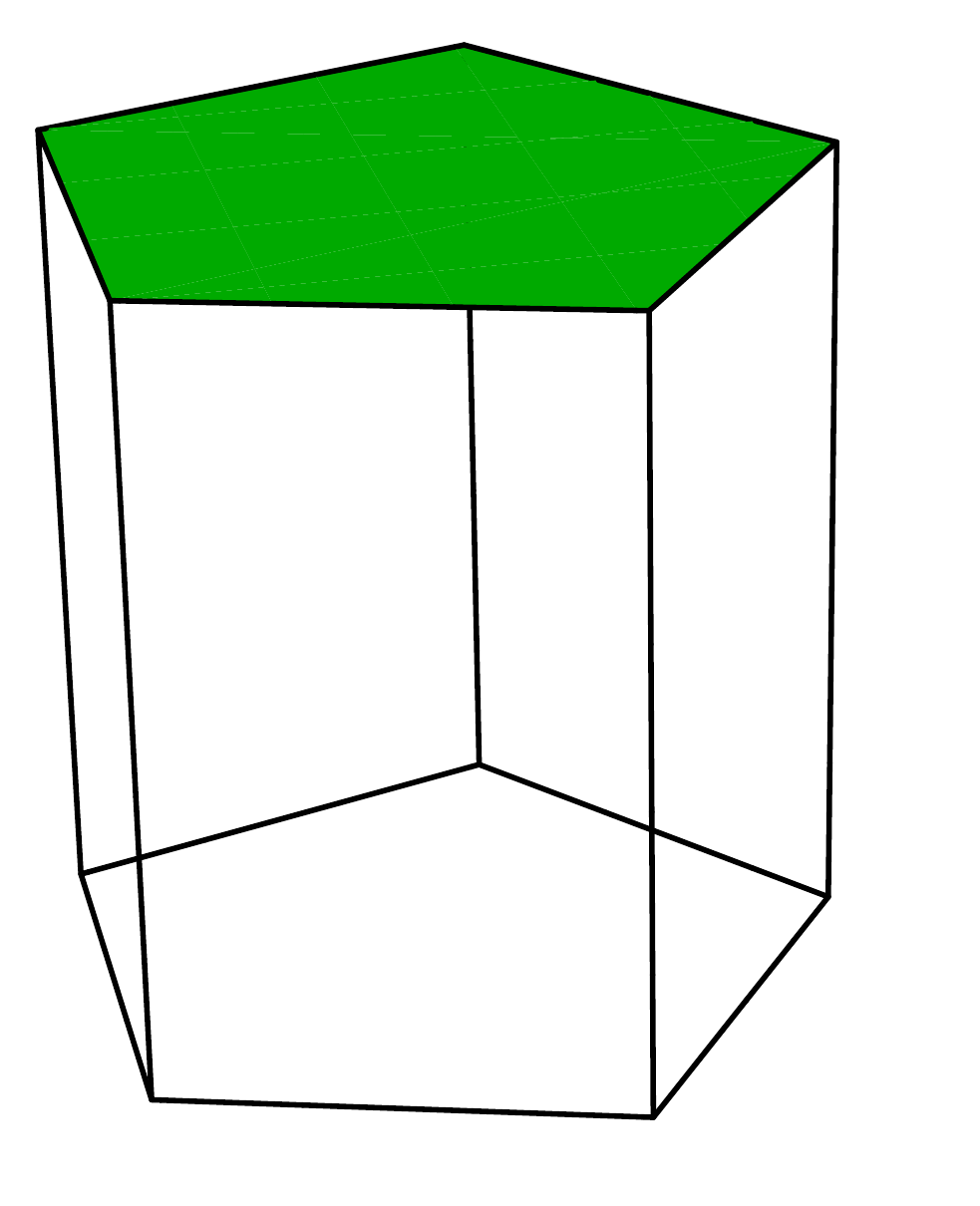}\label{fig:charts-w-boundary-3d-d}}
    \caption{Structured charts with boundary (a)--(c) and unstructured edge chart with boundary (d) in 3D}\label{fig:charts-w-boundary-3d}
\end{figure}
In Figure \ref{fig:boundary-charts-3d} we show the two different types of boundary charts, that are needed to represent all meshes of practical interest. The one in Figure \ref{fig:boundary-charts-3d-a} is a tensor-product of a two-dimensional boundary chart with an interval in the third direction. The chart depicted in Figure \ref{fig:boundary-charts-3d-b} is a boundary chart corresponding to a non-convex vertex at the boundary. One can include more complex boundary configurations by introducing unstructured boundary charts, which we will not consider for the sake of simplicity.
\begin{figure}[!ht]
    \centering
\subfigure[]{\includegraphics[width=0.15\textwidth]{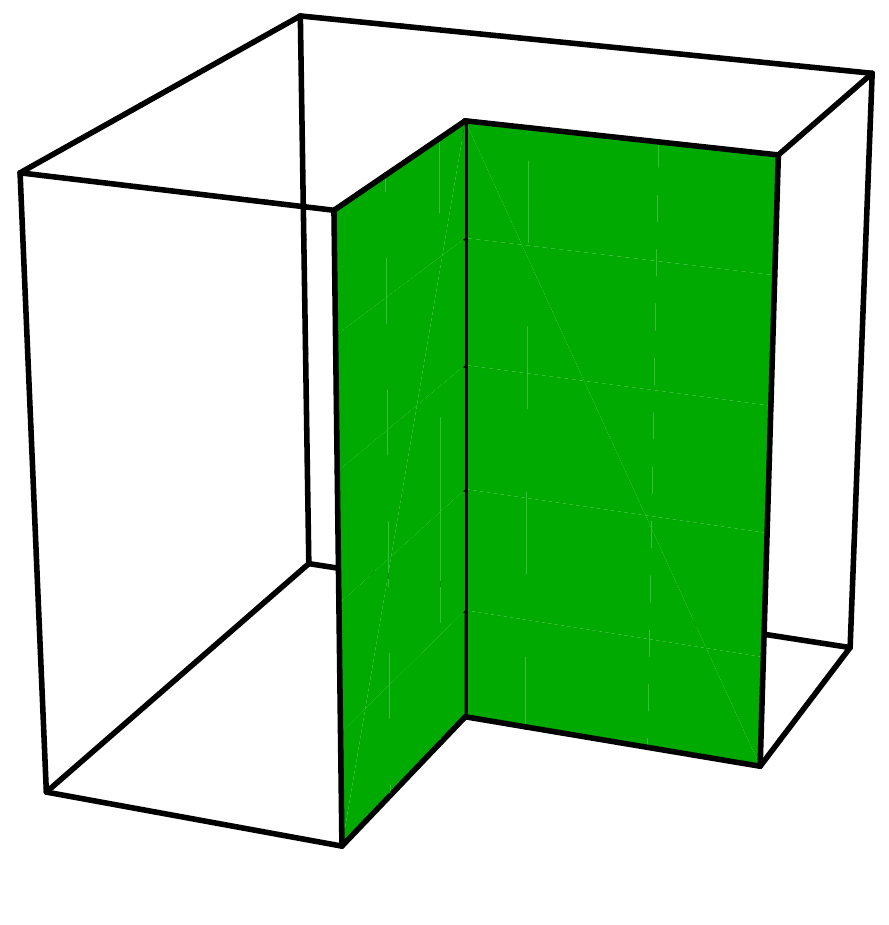}\label{fig:boundary-charts-3d-a}}\qquad
\subfigure[]{\includegraphics[width=0.15\textwidth]{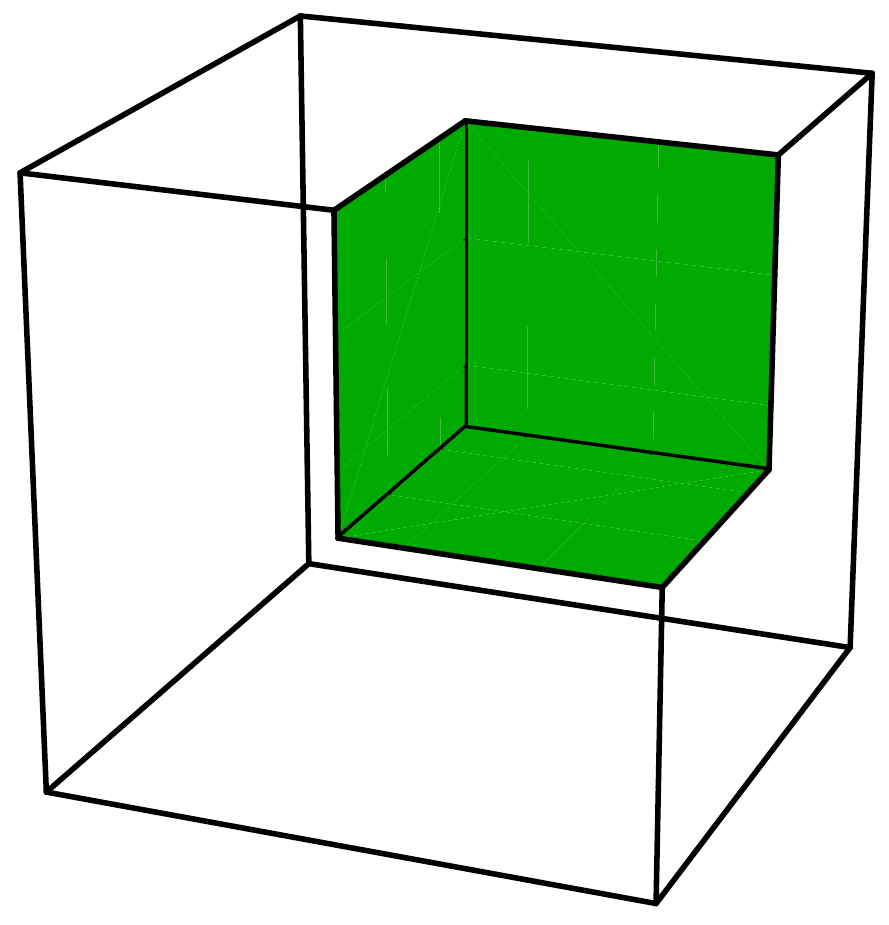}\label{fig:boundary-charts-3d-b}}
    \caption{Boundary charts in 3D}\label{fig:boundary-charts-3d}
\end{figure}

Concerning spline manifold spaces on parameter manifolds with boundary, we need to adjust the condition on proto-basis functions in equation \eqref{eq:support-of-proto-basis-functions} to obtain
\begin{equation}
  \label{eq:support-of-proto-basis-functions-boundary}
  \lim_{\bzeta \rightarrow \partial    \omega_i \backslash \gamma_i} b _{\bA_i}(\bzeta) = 0. 
\end{equation}
With this modification, the spline manifold space can interpolate at the boundary. The theory concerning dual-compatibility and approximation properties presented in Section \ref{sec:analysis-suitable} can be generalized directly to spline manifolds with boundary.
Most importantly, both Theorems \ref{thm:local-approximation-parameter} and \ref{thm:approximation-sigma} extend directly to manifolds with boundary, using the boundary charts introduced here. Thus extending the approximation error bounds to manifolds with boundary of general topology.

\end{appendix}

\end{document}